\documentclass[11pt,reqno]{amsart}

\usepackage{graphicx,color,xcolor} 
\usepackage{amssymb,amsmath,amsthm,amsfonts,mathrsfs}
\usepackage{fan}
\usepackage{hyperref}

\title{Even and odd minimal categorifications of the nilpotent part of classical and quantum $\mathsf{sl}(2)$}
\date{September 2, 2026}

\author{Mikhail Khovanov}
\address{Department of Mathematics, Johns Hopkins University, Baltimore, MD 21218, USA}
\email{\href{mailto:khovanov@jhu.edu}{khovanov@jhu.edu}}

\author{Alvaro L. Martinez} 
\address{Department of Mathematics, Columbia University, New York, NY 10027, USA}
\email{\href{mailto:alm2297@columbia.edu}{alm2297@columbia.edu}}

\author{Fan Zhou} 
\address{Department of Mathematics, Columbia University, New York, NY 10027, USA}
\email{\href{mailto:fz2326@columbia.edu}{fz2326@columbia.edu}}

\def\R{\mathbb R}
\def\Q{\mathbb Q}
\def\Z{\mathbb Z}
\newcommand{\kk}{\mathbbm{k}}
\newcommand{\NH}{\mathsf{NH}}
\newcommand{\ONH}{\mathsf{ONH}}
\newcommand{\NC}{\mathsf{NC}}
\newcommand{\ONC}{\mathsf{ONC}}
\newcommand{\wNC}{\widehat{\NC}}
\newcommand{\wONC}{\widehat{\ONC}}
\newcommand{\Sym}{\on{Sym}}
\newcommand{\slt}{\mathsf{sl}_2}

\newcommand{\SPol}{\mathsf{SPol}}
\newcommand{\lra}{\longrightarrow}
\newcommand{\End}{\operatorname{End}}
\newcommand{\Hom}{\operatorname{Hom}}
\newcommand{\Mat}{\operatorname{Mat}}
\newcommand{\Kar}{\mathsf{Kar}}
\newcommand{\mcC}{\mathcal{C}}
\newcommand{\mcE}{\mathcal{E}}
\newcommand{\mcNH}{\mathcal{NH}}
\newcommand{\mcNC}{\mathcal{NC}}

\newcommand{\mfS}{\mathfrak{S}}
\newcommand{\opartial}{\overline{\partial}}

\newcommand{\gdim}{\on{{gdim}}}

\theoremstyle{definition}
\newtheorem{thm}{Theorem}[section]
\newtheorem{lem}[thm]{Lemma}
\newtheorem{remark}[thm]{Remark}
\newtheorem{cor}[thm]{Corollary}
\newtheorem{prop}[thm]{Proposition}
\newtheorem{example}[thm]{Example}

\theoremstyle{definition}
\newtheorem{definition}[thm]{Definition}

\tikzset{every picture/.style={line width=0.9pt}} 
\newcommand{\npartial}{{%
  \mathpalette\raisenot\partial
}}

\makeatletter
\newcommand{\raisenot}[2]{%
  \raise.4\fontdimen22
    \ifx#1\displaystyle
      \textfont2
    \else
      \ifx#1\textstyle
        \textfont2
      \else
        \ifx#1\scriptstyle
          \scriptfont2
        \else
          \scriptscriptfont2
        \fi
      \fi
    \fi
    \rlap{%
    \settowidth\dimen@{$\m@th#1{#2}$}%
    \kern.5\dimen@
    \settowidth\dimen@{$\m@th#1=$}%
    \kern-.5\dimen@
    $\m@th#1\not$%
  }%
  {#2}%
}
\makeatother 
\nc{\npd}{\npartial}
\NewEnviron{diagram}%
  {\mathord{\hackcenter{\begin{tikzpicture}[scale=0.375]
    \BODY
\end{tikzpicture}}
  }}
\nc{\wcalNC}{\wh{\mathcal{NC}}}
\NewEnviron{raisediagram}[1][]{
	\raisebox{#1}{$\mathord{\hackcenter{\begin{tikzpicture}[scale=0.375]
    \BODY
\end{tikzpicture}}
  }$}
}

\usetikzlibrary{decorations.pathmorphing}

\tikzcdset{
  red squiggle/.style={
    red,
    line width=0.5pt,
    decorate,
    decoration={snake, amplitude=1.5pt, segment length=8pt}
  }
}
\tikzcdset{
  blue squiggle/.style={
    blue,
    line width=0.5pt,
    decorate,
    decoration={snake, amplitude=1.5pt, segment length=5pt}
  }
}
\tikzcdset{
  purple squiggle/.style={
    purple,
    line width=0.5pt,
    decorate,
    decoration={snake, amplitude=1.5pt, segment length=3pt}
  }
}

\nc{\OLbd}{\mathsf{O}\Lambda}
\nc{\osp}{\mathfrak{osp}}
\nc{\pgl}{\mathfrak{pgl}}
\renewcommand{\sl}{\mathsf{sl}}
\pgfplotsset{compat=1.18}

\begin{document}

\begin{abstract}
We write down generators and relations for a minimalist version of the nilHecke category which allows us to categorify the divided powers of a generating functor. We then classify these minimalist counterparts of the nilHecke category, subject to suitable assumptions. A minimalist version of the odd nilHecke category is introduced as well. 
\end{abstract}

\maketitle
\tableofcontents

%
%

\section{Introduction}

When does an additive endofunctor $\mcE$ acting on an additive category $\mcC$ admit divided power functors? By this we mean endofunctors $\mcE^{(n)}$  on $\mcC$ such that there is an isomorphism 
\begin{equation}\label{eq_iso}
\mcE^n \cong n!\, \mcE^{(n)},
\end{equation} 
i.e., the $n$-th power of $\mcE$ is the sum of $n!$ copies of some functor $\mcE^{(n)}$ for each $n\ge 2$?  By taking endomorphisms of both sides of \eqref{eq_iso}, existence of $\mcE^{(n)}$ implies that the endomorphism ring of $\mcE^{n}$ is  isomorphic to a matrix algebra of size $n!\times n!$ over some ring $R_n$: 
\begin{equation}\label{eq_endo_n}
\End(\mcE^{n}) \cong \Mat_{n!}(R_n). 
\end{equation}
It is convenient to require that $\mcC$ is Karoubi-closed. Then, having matrix algebra decompositions \eqref{eq_endo_n} is both necessary and sufficient for the existence of $\mcE^{(n)}$. 

A well-known solution to this problem~\cite{Lau10,rouquier20082,KL09} realizes $\End(\mcE^n)$ as the nilHecke algebra $\NH_n$. This is the algebra of endomorphisms of the vector space of polynomials $\kk[x_1,\dots, x_n]$ generated by operators of multiplication by $x_1,\dots, x_n$ and the Demazure-Bernstein-Gelfand-Gelfand divided difference operators $\partial_1,\dots, \partial_{n-1}$, see~\cite{BGG73,Dem73}. Here $\kk$ is the ground field. 

The nilHecke algebra is isomorphic to the matrix algebra, 
\[
\NH_n \cong \Mat_{n!}(\Sym_n),
\]
of size $n!$ over the ring $\Sym_n\cong \kk[x_1,\dots, x_n]^{S_n}$ of symmetric functions in $n$ variables. Natural inclusions of algebras 
\[
\NH_n\otimes \NH_m \subset \NH_{m+n}
\]
correspond to homomorphisms of rings of natural transformations
\[
\End(\mcE^n) \otimes \End(\mcE^m) \lra \End(\mcE^{n+m}). 
\]
The present paper shows a more economical way to categorify divided powers of a functor and reduce from the coefficients in $\Sym_n$ to coefficients in the ground field. The generating object for this minimalist categorification is denoted $\mcE_0$,  with 
\[
\End(\mcE_0^n) \cong \Mat_{n!}(\kk).
\]
A natural subalgebra  of $\NH_n$ isomorphic to $\Mat_{n!}(\kk)$ implicitly and explicitly appears in several papers on categorification~\cite{KLMS12,KQ15,EllisQi16}. In Section~\ref{sec_minimal} we give several equivalent descriptions of this subalgebra, which we call \emph{the double nilCoxeter algebra} and denote $\wNC_n$. We produce a generators and relations presentation  for the corresponding monoidal category  $\mcNC'$  of endomorphisms of $\mcE_0^n$,  over all $n$, see Section~\ref{subsection_minimal} and Proposition~\ref{prop_wNC_def_rel}.  Category $\mcNC'$ and its Karoubi envelope $\mcNC$ provide what may be called a \emph{minimal categorification} of the ring of divided powers 
\begin{equation}\label{eq_ZE}
    \Z\{E\} := \Z[E^{(n)}]_{n\ge 1}.
\end{equation} 
Ring $\Z\{E\}$ is defined as a subring of 
$\Q[E]$ with the $\Z$-basis of elements $E^{(n)}=E^n/n!$, over all $n\ge 0$. A suitable graded version of the category $\mcNC$ provides a categorification of the ring $\Z_q\{E\}$ of quantum divided powers, see Remark~\ref{rmk_graded} and Section~\ref{sec_classify}. 

\vspace{0.07in}

 In Section~\ref{sec_classify} we study
the extent to which such graded minimal categorifications are unique. We
give a counterexample to several natural uniqueness hypotheses and prove
a three-strand rigidity theorem which characterizes the twisted categories
$\mcNC_\nu'$ once the mixed three-strand structure is fixed.

\vspace{0.07in}

Section~\ref{section-odd} provides another minimal categorification of the divided powers ring, by using a natural matrix subalgebra $\Mat_{n!}(\kk)$ of the odd counterpart $\ONH_n$ of the nilHecke algebra~\cite{EKL12}. The latter algebra plays a fundamental role in the odd categorification of quantum $\slt$ in~\cite{EllisLauda16}, see also related papers~\cite{BrKl,ELV,BE,kang2013supercategorification}. It is unknown whether odd categorifications exist for quantum deformations of positive halves of the universal enveloping algebras of simple Lie algebras $\mathsf{sl}_n$ for $n>2$.  

\vspace{0.07in}

Harman-Snowden-Snyder's Delannoy category~\cite{HSS} is a rigid symmetric monoidal semisimple category built out of the Borel-Moore Euler characteristic for suitable subspaces of $\R^n$. 
It has a symmetric monoidal semisimple subcategory, denoted here $\mcC_+$, with a single generating object $L_{\bullet}$ and simple objects $L_{\bullet^n}$ parametrized by $n\in \Z_+$, with their images in the Grothendieck ring $K_0(\mcC_+)$ given by $\binom{H}{n}$. 
Category $\mcC_+$ may be called the \emph{positive Delannoy category}~\cite{KhSn}. The Grothendieck ring of $\mcC_+$ is naturally isomorphic to the ring $S$ of integer-valued polynomials $f\in \Q[H]$ with $f(k)\in \Z$ for all $k\in \Z$:
\begin{equation}
    K_0(\mcC_+) \ \cong \ S, \ \ [L_{\bullet^n}]\leftrightarrow \binom{H}{n}. 
\end{equation}
(Subring $S$ of $\Q[H]$ has a $\Z$-basis of binomials $\binom{H}{n}$, $n\ge 0$.)  Positive Delannoy category provides a  categorification of $S$, which may also be called a \emph{minimal categorification} of $S$.  
Another categorification of $S$, via the abelian monoidal nonsemisimple category of a direct limit of categories of coherent sheaves on $\mathbb{CP}^n$, as $n\to \infty$, can be found in~\cite{KhCoh2}.  

It may be interesting to compare our semisimple categorification of the ring of divided powers with these two categorifications of the ring $S$ of shifted divided powers. 

\vspace{0.07in}

We conclude the introduction with several open questions. Kostant integral form $U_{\Z}(\slt)$ of the universal enveloping algebra of the Lie algebra $\slt=\langle E,F,H\rangle$ is generated by divided powers (shifted divided powers for $H$) 
\[
E^{(n)}=\frac{E^n}{n!}, \ \  F^{(n)}=\frac{F^n}{n!}, \ \ H^{[n]}:=\binom{H}{n}= \frac{H(H-1)\dots (H-n+1)}{n!}
\]
with a basis 
\[
E^{(n_1)} H^{[n_2]} F^{(n_3)},  \ \ n_1,n_2,n_3\in \Z_+,
\]
see~\cite[Section 26]{Hum}. 
Rings  $\Z\{E\}$ in \eqref{eq_ZE} and $S$ above  are the subrings of $U_{\Z}(\slt)$ spanned by divided powers of $E$ and shifted divided powers of $H$, respectively. 
Each of these two subrings admits a semisimple categorification, described in the present paper for $\Z\{E\}$ and in~\cite{HSS} for $S$. 

Consider the subring $U^+_{\Z}$ of $U_{\Z}(\slt)$ generated by $S$ and $\Z\{E\}$. It has a $\Z$-basis of elements $E^{(n_1)}H^{[n_2]}$, $n_1,n_2\in \Z_+$, with multiplication in this basis having nonnegative integer coefficients. It is a natural question whether there exists a semisimple monoidal category $\mathcal{U}^+$ with the Grothendieck ring $U^+_{\Z}$ and simple objects corresponding to the basis elements $E^{(n_1)} H^{[n_2]}$ above. A more restrictive question is whether such a category may contain both the category $\mcC_+$ above (the positive Delannoy category) and the category in the present paper, as the subcategories categorifying the subrings $S$ and $\Z\{E\}$ of  $U^+_{\Z}$, respectively. 

A related problem is to construct a nonsemisimple categorification of the entire $U_{\Z}(\slt)$. In such a categorification one would expect $E,F$ to lift to biadjoint functors $\mcE,\mathcal{F}$, while $H$ and its shifted divided powers would lift to self-adjoint functors. 

A conjecture for a categorification of the \emph{quantized} Cartan subalgebra of $U_q(\slt)$ can be found in~\cite{Alvaro}, see also related papers~\cite{GMSW1,GMSW2,KhCoh,KhCoh2}. 
These conjectural categorifications should be related to the existing categorifications of the Lusztig idempotented form 
$\dot{U}$ 
of quantum $\slt$ and other Kac-Moody Lie algebras in~\cite{Lau10,rouquier20082,KhLa2}, possibly via functors or 2-functors into categorifications of the idempotented forms. The goal of the above problems is to avoid having to categorify idempotented forms of quantum groups and find categorifications of the original (quantized or classical) universal enveloping algebras, without having to add categorical analogues of projections $1_{\lambda}$ onto integral weights $\lambda$. 
Positive Delannoy category~\cite{HSS,KhSn}, a minimal categorification of a nilpotent subalgebra for $\slt$ in the present paper, and constructions of~\cite{Alvaro,GMSW1,GMSW2,KhCoh2} are early indications that such categorifications may exist. 

\vspace{0.1in}

{\bf Acknowledgments:}  M.K.~would like to acknowledge partial support from Simons Collaboration Award 994328 ``New Structures in Low-Dimensional Topology''.

%
%

\section{Minimal categorification of the divided powers ring}\label{sec_minimal}


\subsection{Double nilCoxeter algebra  \texorpdfstring{$\wNC_n$}{NC}}
\label{doubled_nilCoxeter}

\quad 

\noindent 
{\it Divided difference operators and the nilCoxeter algebra.}
Let $\kk$ be the ground field. For $n\ge 0$ consider three rings
\begin{eqnarray*}\label{eq_three_rings}
& & R_n =\kk[x_1,\dots, x_n], \ \ R^{\circ}_n=\kk[x_1^{\pm 1},\dots, x_n^{\pm 1}], \\
& &  R^-_n=\kk[x_1^{-1},\dots, x_n^{-1}], \ \ R_n\subset R_n^{\circ}\supset R^-_n.
\end{eqnarray*}
Involution $\psi(x_i)=x_i^{-1}$ of $R^{\circ}_n$ exchanges $R_n$ with $R^-_n$.

Denote by $\partial_i:R_n\lra R_n$ the $i$-th divided difference operator
\[\partial_i(f)=(f- s_i f)/(x_i-x_{i+1}), \ \ 1\le i\le n-1
\]
acting on $R_n$ and $R^{\circ}_n$. Here $s_i=(i,i+1)\in S_n$ is the $i$-th elementary transposition in the symmetric group $S_n$, the latter acting on the rings above by permuting indices of $x_1,\dots, x_n$. These operators were independently introduced by I.Bernstein, I.Gelfand and S.Gelfand ~\cite{BGG73} and  M.Demazure ~\cite{Dem73}.

Divided difference operators satisfy the relations
\begin{equation}\label{eq_demazure}
\partial_i^2=0, \ \ \partial_i \partial_{i+1}\partial_i = \partial_{i+1}\partial_i\partial_{i+1}, \ \  \partial_i\partial_j = \partial_j \partial_i, \ \ |i-j|>1.
\end{equation}
Define the nilCoxeter algebra $\NC_n$ as the algebra of operators on $R_n$ generated by $\partial_1, \dots, \partial_{n-1}$. Then $\dim_{\kk}(\NC_n)=n!$ and it has a basis of elements $\partial_w:=\partial_{i_1}\dots \partial_{i_k}$, where $w=s_{i_1}\dots s_{i_k}$ is a minimal presentation of the permutation $w\in S_n$ as a product of elementary transpositions. The multiplication in this basis is
\[\partial_w\partial_{w'}=\begin{cases} \partial_{ww'} & \mathrm{if} \ \ell(ww')=\ell(w)+\ell(w'), \\
 0 & \mathrm{otherwise}.
 \end{cases}
\]
Alternatively, $\NC_n$ can be defined as the algebra of operators on $R^{\circ}$ generated by $\partial_1,\dots, \partial_{n-1}$. This makes no difference
to the defining relations. Indeed, $R^{\circ}_n=\cup_{N\ge 0} R_n\cdot (x_1\dots x_n)^{-N}$,
and multiplication by $(x_1\dots x_n)^{\pm 1}$ commutes with the divided difference operators.

Let $\Sym_n\subset R_n$ be the subalgebra of symmetric functions in $x_1, \dots, x_n$. It is isomorphic to the polynomial algebra in elementary symmetric functions $e_m^{(n)}:=e_m(x_1,\dots, x_n)$, $1\le m \le n$, defined by~\eqref{eq_elem}. 
  
Let $V_n\subset R_n$ be the vector subspace with a basis
\begin{equation}\label{eq_basis_B}
B_n \ :=\ \{x_1^{a_1}x_2^{a_2}\dots x_{n-1}^{a_{n-1}}|\, 0\le a_i \le n-i \text{ for }i=1,\ldots,n-1\}.
\end{equation}
Then $|B_n|=n!$. Consider the monomial 
\begin{equation}\label{eq_monomial}
\underline{x}_n:=x_1^{n-1}x_2^{n-2}\dots x_{n-1}.
\end{equation}
The set $B_n$ consists of all monomials whose $x_1, \dots, x_n$ degrees are at most those in $\underline{x}_n$ and 
\[
V_n = R_n\cap (R_n^- \underline{x}_n)\subset R_n^{\circ}.
\]
The following proposition is well-known. For instance, see Proposition 3.4 in ~\cite{Lau10}.

\begin{prop} $R_n$ is a free graded $\Sym_n$-module with a homogeneous basis $B_n$.
\end{prop}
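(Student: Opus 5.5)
The plan is to argue by induction on $n$, using the tower $\Sym_n\subset \Sym_{n-1}[x_n]\subset R_n$. Here $\Sym_{n-1}[x_n]$ denotes polynomials in $x_1,\dots,x_n$ that are symmetric in the first $n-1$ variables. The cases $n\le 1$ are trivial: $B_1=\{1\}$ and $R_1=\Sym_1$.

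\emph{Step 1.} Show that $\Sym_{n-1}[x_n]$ is free over $\Sym_n$ with basis $1,x_n,\dots,x_n^{n-1}$. By symmetry it is equally free with basis $1,x_1,\dots,x_1^{n-1}$ if we instead use the variables symmetric in $x_2,\dots,x_n$. To match the shape of $B_n$, in which $x_1$ carries the top exponent $n-1$, I would set up the induction that way: write $R_n=R_{n-1}'\otimes$ stuff, where $R_{n-1}'=\kk[x_2,\dots,x_n]$ and the inductive basis is relabeled accordingly. The key identity is
\[
\prod_{j=1}^n (t-x_j)=\sum_{m}(-1)^m e_m^{(n)}t^{n-m}.
\]
Setting $t=x_1$ shows that $x_1^n$ is a $\Sym_n$-combination of lower powers of $x_1$ with coefficients in $\Sym_n$. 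Moreover $\Sym_{n}[x_1]$ equals the ring of polynomials symmetric in $x_2,\dots,x_n$, since $e_m(x_2,\dots,x_n)$ can be expressed recursively through $e^{(n)}_k$ and $x_1$. Together these show that $1,x_1,\dots,x_1^{n-1}$ span.

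\emph{Step 2.} Prove linear independence of $1,x_1,\dots,x_1^{n-1}$ by a graded dimension count. The Hilbert series of $\Sym_n$ is $\prod_{k=1}^n(1-q^k)^{-1}$ and that of $\Sym_{n-1}$ in $x_2,\dots,x_n$, adjoined $x_1$, is $(1-q)^{-1}\prod_{k=1}^{n-1}(1-q^k)^{-1}$. Their ratio is $[n]_q=1+q+\dots+q^{n-1}$, which matches the spanning set. A graded spanning set whose generating function equals the ratio of Hilbert series must be a basis: the surjection from the free module onto the target is degreewise a surjection between finite-dimensional spaces of equal dimension.

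\emph{Step 3.} By induction, $R_n$ is free over $\kk[x_1]\otimes\Sym(x_2,\dots,x_n)$ with basis the monomials $x_2^{a_2}\cdots x_{n-1}^{a_{n-1}}$, $a_i\le n-i$; this is $B_{n-1}$ with indices shifted. Combining with Step 1 via transitivity of free bases, $R_n$ is free over $\Sym_n$ with basis the products $x_1^{a_1}\cdot x_2^{a_2}\cdots$, $a_1\le n-1$, which is exactly $B_n$. Homogeneity is clear since all elements are monomials.

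The only real obstacle is the spanning claim in Step 1, namely that $\Sym(x_2,\dots,x_n)\subset\Sym_n[x_1]$. I would handle it with the recursion $e_m(x_2,\dots,x_n)=e_m^{(n)}-x_1e_{m-1}(x_2,\dots,x_n)$, together with the fundamental theorem of symmetric polynomials, which is assumed (the paper already states $\Sym_n$ is polynomial in the $e_m^{(n)}$). Alternatively, the whole statement follows from the Hilbert series identity $\sum_{b\in B_n}q^{\deg b}=[n]_q!$ plus spanning by a straightening argument, but the tower argument above is cleaner.
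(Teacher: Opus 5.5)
Your proof is correct, and it supplies an argument the paper omits: the paper records this proposition as well known and refers to Proposition 3.4 of \cite{Lau10}, without giving a proof. Your tower
\[
\Sym_n\subset \kk[x_1]\otimes\kk[x_2,\dots,x_n]^{S_{n-1}}\subset R_n
\]
gives a self-contained proof that works in any characteristic. Each step checks out:
\begin{itemize}
\item Spanning in Step 1 follows from $\prod_j(x_1-x_j)=0$ together with the recursion $e_m(x_2,\dots,x_n)=e_m^{(n)}-x_1e_{m-1}(x_2,\dots,x_n)$.
\item Independence follows from the Hilbert series ratio $(1-q^n)/(1-q)$ and a degreewise dimension count.
\item Base change of the inductive hypothesis along $\kk[x_1]\otimes_{\kk}-$, followed by transitivity of free bases, produces exactly the monomials $x_1^{a_1}\cdots x_{n-1}^{a_{n-1}}$ with $a_i\le n-i$.
\end{itemize}
The only outside input is the fundamental theorem on symmetric polynomials. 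You need it with algebraic independence of the $e_m$, because both Hilbert series depend on it; the paper does state this fact.

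The one weak spot is exposition. The opening of Step 1 first singles out $x_n$ and then switches to $x_1$, and ``$R_n=R'_{n-1}\otimes$ stuff'' is not a statement. In a write-up, state Step 1 directly: $\kk[x_1]\otimes\kk[x_2,\dots,x_n]^{S_{n-1}}$ is free over $\Sym_n$ with basis $1,x_1,\dots,x_1^{n-1}$.
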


Consequently, there is an isomorphism of free graded $\Sym_n$-modules $R_n\cong V_n\otimes \Sym_n$. 

Algebra $R^{\circ}_n$ is given by localizing $R_n$ at the monomial $x_1x_2\dots x_n$. Define $\Sym_n^{\circ}\subset R^{\circ}_n$ to be the subalgebra of $S_n$-invariant Laurent polynomials under the permutation action of the symmetric group. Then $\Sym_n^{\circ}$ is the localization of $\Sym_n$ given by inverting $x_1x_2\dots x_n$. Consequently $R_n^{\circ}$ is a free graded $\Sym_n^{\circ}$ module with basis $B_n$, and 
\begin{equation}  \label{eq_iso_circc}
R_n^{\circ}\cong V_n\otimes \Sym_n^{\circ}
\end{equation}
as graded free $\Sym_n^{\circ}$-modules. 

\vspace{0.07in}

{\it Negative divided difference operators.}
Define a version of the divided difference operator acting on $R_n^-$ and $R_n^{\circ}$:
\begin{equation}\label{eq_d_bar}
\opartial_i f := (f-s_i f)/(x_i^{-1}-x_{i+1}^{-1}).
\end{equation}
Clearly, $\opartial_i$ is conjugate to $\partial_i$ via $\psi$, that is, $\psi\opartial_i\psi=\partial_i$ as endomorphisms of $R_n$ and $R^{\circ}_n$. Manipulating the denominator in \eqref{eq_d_bar} one gets  
\begin{equation}\label{eq_opart_t}
    \opartial_i = - x_i x_{i+1} \partial_i .
\end{equation}

Define \emph{the negative divided difference operator} 
\begin{equation}\label{eq_d_minus}\partial_i^-:= - x_i\partial_i x_{i+1}
\end{equation}
and view it as an operator on $R_n$ and $R^{\circ}_n$.

We make $R_n,R^{\circ}_n$ and $R_n^-$ graded by $\deg(x_i)=2$, $\deg(x_i^{-1})=-2$.
Operator $\partial_i^-$ has the opposite degree to that of $\partial_i$. Namely, $\deg(\partial_i)=-2$ and $\deg(\partial_i^-)=2$.

For a polynomial $f\in R_n$ in $x_1,\dots, x_n$ there is the corresponding polynomial $\psi(f)\in R_n^-$ in $x_1^{-1},\dots, x_n^{-1}$. Likewise, $\psi$ takes a Laurent polynomial $f$ to a Laurent polynomial $\psi(f)$.

\begin{prop} \label{prop_partial_minus} For $f\in R^{\circ}_n$,
\begin{equation}
  \partial_i^{-}(\psi(f)\underline{x}_n)= \psi(\partial_i f)\underline{x}_n =
  \opartial_i(\psi(f))\underline{x}_n.
\end{equation}
\end{prop}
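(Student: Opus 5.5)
The second equality, $\psi(\partial_i f)=\opartial_i(\psi(f))$, is simply the conjugation identity $\psi\opartial_i\psi=\partial_i$ noted after \eqref{eq_d_bar}. Since $\psi$ is an involution, we get $\psi\partial_i = \opartial_i\psi$. Multiplying both sides by $\underline{x}_n$ gives the second equality. So the work lies in the first equality, which we reduce to the identity
\[
\partial_i^-(g\,\underline{x}_n) = \opartial_i(g)\,\underline{x}_n \qquad \text{for all } g\in R^{\circ}_n,
\]
and then apply with $g=\psi(f)$.

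We prove this identity by a direct computation with the definitions \eqref{eq_d_minus} and \eqref{eq_opart_t}. The key input is how $\underline{x}_n$ transforms under $s_i$. In $\underline{x}_n$ the variables $x_i$ and $x_{i+1}$ appear with exponents $n-i$ and $n-i-1$. Writing $\underline{x}_n = x_i\, m$, where $m$ has equal exponents in $x_i$ and $x_{i+1}$, the monomial $m$ is $s_i$-invariant. Then
\[
x_{i+1}\, g\, \underline{x}_n = x_i x_{i+1}\, m\, g ,
\]
and $x_ix_{i+1}m$ is symmetric in $x_i,x_{i+1}$. Since $\partial_i$ is linear over $s_i$-invariant elements of $R^{\circ}_n$, this gives
\[
\partial_i^-(g\underline{x}_n) = -x_i\,\partial_i(x_ix_{i+1}m\,g) = -x_i\cdot x_ix_{i+1} m\,\partial_i(g) = -x_ix_{i+1}\partial_i(g)\cdot x_i m = \opartial_i(g)\,\underline{x}_n ,
\]
using \eqref{eq_opart_t} in the last step.

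The one step needing care is the linearity of $\partial_i$ over $s_i$-invariants on Laurent polynomials, i.e.\ $\partial_i(hg)=h\,\partial_i(g)$ when $s_ih=h$. This is immediate from the formula $(hg - s_i(h) s_i(g))/(x_i-x_{i+1})$. We also need \eqref{eq_opart_t} to hold on all of $R^{\circ}_n$, which follows from $x_i^{-1}-x_{i+1}^{-1} = -(x_i-x_{i+1})/(x_ix_{i+1})$. Beyond that, the only thing to check is the bookkeeping of exponents in $\underline{x}_n$ for each $1\le i\le n-1$; there is no real obstacle.
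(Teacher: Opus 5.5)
Your proof is correct and is essentially the paper's argument: both reduce the first equality to the fact that $x_ix_{i+1}$ times an $s_i$-invariant factor commutes with $\partial_i$, and then apply \eqref{eq_opart_t}. The difference is only in bookkeeping. You factor $\underline{x}_n = x_i m$ with $m$ an $s_i$-invariant monomial, which handles all $i$ and $n$ at once. The paper instead computes for $n=2$, $i=1$, extends by the symmetric factors $(x_1x_2)^a$, and then relabels indices.
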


\begin{proof} The second equality is just the relation $\psi\partial_i = \opartial_i \psi$.
To establish the first equality,
starting with $f=f(x_1,x_2)\in R_2^\circ$ we compute
\begin{eqnarray*} - x_1 \partial_1 x_2 (x_1\psi(f)) & = & -x_1^2x_2 \partial_1 \psi(f)\\
& = & x_1 \opartial_1\psi(f)\\
& = &x_1 \psi \partial_1(f)
\end{eqnarray*}
where the first equality holds because $x_1x_2$ commutes with $\partial_1$, and for the second one we applied ~\eqref{eq_opart_t}.
Thus, $\partial_1^-(\psi(f)x_1) =\psi(\partial_1 f)x_1$. Since $\partial_1^-$ commutes with $x_1x_2$, we also have $\partial_1^-(\psi(f)x_1^{a+1}x_2^a) =\psi(\partial_1 f)x_1^{a+1}x_2^a$ for all $a\ge 0$.

Replacing $x_1,x_2,\partial_1$ by $x_i,x_{i+1}$ and $\partial_i$, respectively, implies the proposition.
\end{proof}

{\it Involution $\tau$.}
Let $\tau$ be the involution
\begin{equation}\label{eq_map_x} f\longmapsto \underline{x}_n \cdot \psi(f), \ \  f\in R^{\circ}_n.
\end{equation}
\begin{cor} \label{cor_conjugation} For $f\in R^{\circ}_n$
\[
  \tau\partial_i^{-}\tau f= \partial_i f ,
  \]
  so that 
  \[
  \partial_i^- = \tau \partial_i \tau. 
  \]
\end{cor}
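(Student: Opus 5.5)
The plan is to read Corollary~\ref{cor_conjugation} directly off Proposition~\ref{prop_partial_minus}, once we know that $\tau$ is an involution.

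First we check that $\tau$ is indeed an involution of $R^{\circ}_n$. Since $\psi$ is a ring automorphism of $R^{\circ}_n$ with $\psi^2=\mathrm{id}$, we have
\[
\tau^2 f = \underline{x}_n\,\psi(\underline{x}_n\psi(f)) = \underline{x}_n\,\psi(\underline{x}_n)\, f = f,
\]
because $\psi(\underline{x}_n)=\underline{x}_n^{-1}$. In particular $\tau$ is bijective on $R^{\circ}_n$ and $\tau^{-1}=\tau$.

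Next, Proposition~\ref{prop_partial_minus} states, for $f\in R^{\circ}_n$, that
\[
\partial_i^-(\psi(f)\underline{x}_n)=\psi(\partial_i f)\underline{x}_n,
\]
which in terms of $\tau$ reads $\partial_i^-\tau f=\tau\partial_i f$. Applying $\tau$ to both sides and using $\tau^2=\mathrm{id}$ gives $\tau\partial_i^-\tau f=\partial_i f$ for all $f\in R^{\circ}_n$. This is the first claim. Conjugating the operator identity $\tau\partial_i^-\tau=\partial_i$ by $\tau$ once more gives $\partial_i^-=\tau\partial_i\tau$ as operators on $R^{\circ}_n$.

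The only point needing care is where these identities hold. The paper views $\partial_i^-=-x_i\partial_i x_{i+1}$ on both $R_n$ and $R^{\circ}_n$, but $\tau$ does not preserve $R_n$. So we state both identities on $R^{\circ}_n$. Since $R_n\subset R^{\circ}_n$ is stable under $\partial_i$ and $\partial_i^-$, the restriction of the second identity to $R_n$ is then automatic. No real obstacle is expected beyond this bookkeeping.
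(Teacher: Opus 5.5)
Your proof is correct and follows the paper's route. The paper states the corollary simply as a rewrite of the first equality in Proposition~\ref{prop_partial_minus}, namely $\partial_i^-\tau=\tau\partial_i$ on $R^{\circ}_n$, which together with $\tau^2=\mathrm{id}$ gives both identities; you additionally verify $\tau^2=\mathrm{id}$ explicitly, which the paper takes for granted.
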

This is a rewrite of the first equality in Proposition~\ref{prop_partial_minus}.

\begin{prop}\label{prop_involution_endomorphism}
    The involution $\tau$ restricts to  an endomorphism of $V_n$.
\end{prop}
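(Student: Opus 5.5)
The proof is a direct check on the monomial basis $B_n$ of $V_n$ from \eqref{eq_basis_B}. Since $\tau(f)=\underline{x}_n\cdot\psi(f)$ is $\kk$-linear (as $\psi$ is a ring automorphism of $R^{\circ}_n$ and multiplication by $\underline{x}_n$ is linear), it suffices to show that $\tau$ sends each element of $B_n$ to an element of $B_n$.

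Take a basis monomial $m=x_1^{a_1}\cdots x_{n-1}^{a_{n-1}}$ with $0\le a_i\le n-i$, and write $a_n=0$. Then $\psi(m)=x_1^{-a_1}\cdots x_{n-1}^{-a_{n-1}}$, so by \eqref{eq_monomial}
\[
\tau(m)=x_1^{\,n-1-a_1}x_2^{\,n-2-a_2}\cdots x_{n-1}^{\,1-a_{n-1}}.
\]
Set $b_i=n-i-a_i$. The condition $0\le a_i\le n-i$ is equivalent to $0\le b_i\le n-i$. The exponent of $x_n$ is $0$ in both $m$ and $\tau(m)$. Hence $\tau(m)\in B_n$, and $m\mapsto\tau(m)$ is an involutive bijection $B_n\to B_n$, namely the complement $a\mapsto \underline{x}_n/a$ inside the box of exponents bounded by $\underline{x}_n$.

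It follows that $\tau(V_n)\subseteq V_n$. Since $\tau^2=\mathrm{id}$ on $R^{\circ}_n$ (because $\psi(\underline{x}_n)\underline{x}_n=1$), the restriction is in fact an involutive automorphism of $V_n$. This also matches the description $V_n=R_n\cap(R_n^-\underline{x}_n)$: $\tau$ exchanges $R_n$ and $R_n^-\underline{x}_n$, so it preserves their intersection, which gives a basis-free version of the same argument. The only point needing care is the exponent bookkeeping, in particular checking that $x_n$ never appears.
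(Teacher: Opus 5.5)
Your proof is correct and is essentially the paper's own argument: compute $\tau$ on the monomial basis $B_n$, then observe that the complementary exponents $n-i-a_i$ still lie in $[0,n-i]$. Your extra remarks also hold: $\tau^2=\mathrm{id}$, and $\tau$ swaps $R_n$ and $R_n^-\underline{x}_n$, so it preserves their intersection $V_n$, which gives a clean basis-free version of the same fact.
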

\begin{proof}
    To see this, take an arbitrary basis monomial in $B_n$
    \[x_1^{a_1}x_2^{a_2}\dots x_{n-1}^{a_{n-1}}\]
    with $0\le a_i \le n-i$ for $i=1,\ldots,n-1$.
    Then \[\tau(x_1^{a_1}x_2^{a_2}\dots x_{n-1}^{a_{n-1}})=x_1^{n-1-a_1}x_2^{n-2-a_2}\dots x_{n-1}^{1-a_{n-1}}\]
    Multiplying $0\le a_i \le n-i$ by $-1$ and adding $n-i$ to each term, we get \[n-i \ge n-i-a_i \ge 0,\]so the image of the monomial lies in $V_n$.
\end{proof}

Note that both $\partial_i$ and $\partial_i^-$ preserve subspaces $R_n$ and $R_n^-  \underline{x}_n$ and their intersection $V_n$. 

\vspace{0.1in} 

{\it Negative and double nilCoxeter algebras.}
Denote by $\NC_n^-$ the subalgebra of endomorphisms of $R^{\circ}_n$ generated by
$\partial_1^-,\dots, \partial_{n-1}^-$. Due to Corollary~\ref{cor_conjugation},
there is a natural isomorphism $\NC_n^-\cong \NC_n$ taking $\partial^-_i$ to $\partial_i$.

\begin{cor}\label{cor_relations_minus}
    The following relations hold on operators in $R_n$ and $R^{\circ}_n$:
\begin{equation}
\partial^-_i\partial_i^-=0, \ \ \partial_i^- \partial_{i+1}^-\partial_i^- = \partial_{i+1}^-\partial^-_i\partial_{i+1}^-, \ \  \partial_i^-\partial_j^- = \partial_j^- \partial_i^-, \ \ |i-j|>1.
\end{equation}
\end{cor}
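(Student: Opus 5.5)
The plan is to transport the relations~\eqref{eq_demazure} for the ordinary divided difference operators along the involution $\tau$, using Corollary~\ref{cor_conjugation}. That corollary gives $\partial_i^- = \tau\partial_i\tau$ as operators on $R^{\circ}_n$, and $\tau$ is an involution, so $\tau^2=\mathrm{id}$.

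First I will note that $\tau$ is a linear bijection of $R^{\circ}_n$ with $\tau^{-1}=\tau$. Both $\psi$ and multiplication by $\underline{x}_n$ are linear, and
\[
\tau^2 f=\underline{x}_n\psi(\underline{x}_n\psi(f))=\underline{x}_n\,\underline{x}_n^{-1} f=f .
\]
Hence $\partial\mapsto \tau\partial\tau$ is an algebra automorphism of $\End_{\kk}(R^{\circ}_n)$. It takes any product $\partial_{i_1}\cdots\partial_{i_k}$ to $\partial^-_{i_1}\cdots\partial^-_{i_k}$, because the inner factors $\tau\tau$ cancel.

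Next I will conjugate each relation in~\eqref{eq_demazure} by $\tau$:
\[
\partial_i^-\partial_i^-=\tau\partial_i^2\tau=0,\qquad
\partial_i^-\partial_{i+1}^-\partial_i^-=\tau\partial_i\partial_{i+1}\partial_i\tau=\tau\partial_{i+1}\partial_i\partial_{i+1}\tau=\partial_{i+1}^-\partial_i^-\partial_{i+1}^-,
\]
and similarly $\partial_i^-\partial_j^-=\partial_j^-\partial_i^-$ for $|i-j|>1$. This establishes all three relations on $R^{\circ}_n$.

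Finally, to get the relations on $R_n$, I will use that $\partial_i^-=-x_i\partial_i x_{i+1}$ preserves $R_n\subset R^{\circ}_n$. So the operators on $R_n$ are restrictions of those on $R^{\circ}_n$, and identities between operators on $R^{\circ}_n$ restrict to $R_n$.

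The only point needing care is that $\tau$ does not preserve $R_n$, since it sends $R_n$ to $R_n^-\underline{x}_n$. For this reason the conjugation argument must be carried out on $R^{\circ}_n$ first and then restricted, rather than applied to $R_n$ directly. I do not expect any real obstacle beyond this.

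As an independent check of the first relation, one could compute $\partial_i^-\partial_i^- = x_i\partial_i x_{i+1}x_i\partial_i x_{i+1}$. Since $x_ix_{i+1}$ is $s_i$-symmetric, it commutes with $\partial_i$, so this equals $x_i\,x_ix_{i+1}\,\partial_i^2\, x_{i+1}=0$.
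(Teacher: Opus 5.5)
Your proposal is correct and follows the paper's route: conjugating the nilCoxeter relations \eqref{eq_demazure} by the involution $\tau$, via Corollary~\ref{cor_conjugation}, is exactly how the paper obtains the isomorphism $\NC_n^-\cong\NC_n$ from which this corollary follows. Your passage from $R^{\circ}_n$ to $R_n$ by restriction is valid. The paper instead notes that these operators commute with $(x_1\cdots x_n)^{\pm 1}$, so that a relation holds on $R_n$ if and only if it holds on $R^{\circ}_n$.
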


For $w\in S_n$ define $\partial_w^-:=\partial_{i_1}^-\dots \partial_{i_k}^-$ for a reduced presentation $w=s_{i_1}\dots s_{i_k}$.
Then $\{\partial^-_w|w\in S_n\}$ is a basis of $\NC_n^-$ and $\dim \NC_n^-=n!$.

Sometimes we denote $\partial_i$ by $\partial_i^+$ and $\NC_n$ by $\NC_n^+$.
Then, for $\varepsilon\in \{+,-\}$, there are endomorphisms $\partial_i^{\varepsilon}$ of $R_n$ and $R^{\circ}_n$, and the algebra $\NC_n^{\varepsilon}$ is generated by them. Given a reduced expression $w=s_{i_1}\cdots s_{i_r}$, we may write $\partial^{\varepsilon}_w:=\partial^{\varepsilon}_{i_1,\ldots,i_r}$.

Note that both operators $\partial^+_i,\partial^-_i$ commute with multiplication by $(x_1\dots x_n)^{\pm 1}$ and 
\[
R^{\circ}_n=\cup_{N\ge 0}(x_1\dots x_n)^{-N}R_n,
\]
so that a $\kk$-linear relation on compositions of these operators holds in $R_n$
iff it holds in $R^{\circ}_n$.

\begin{lem} \label{lemma_stable} For $N\ge 0$ the subspace $V_2^N\subset R_2$ spanned by monomials in the set
  \[ B_2^N \ := \ \{ x_1^a x_2^b | a\le N+1, b\le N\}
  \]
is stable under operators $\partial_1$ and $\partial_1^-=-x_1\partial_1 x_2$.
\end{lem}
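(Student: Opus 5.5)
The plan is to check the claim directly on the monomial basis $B_2^N$ (with $a,b\ge 0$ implicit, so that $V_2^N\subset R_2$). Both $\partial_1$ and $\partial_1^-$ are $\kk$-linear, so it suffices to show that each sends every monomial $x_1^ax_2^b$ with $0\le a\le N+1$, $0\le b\le N$ into the span of such monomials. The single tool is the explicit formula for $\partial_1$ on a monomial. For $p\neq q$,
\[
\partial_1(x_1^px_2^q)=\pm\sum_{\substack{i+j=p+q-1\\ \min(p,q)\le i,j\le \max(p,q)-1}} x_1^ix_2^j ,
\]
and $\partial_1(x_1^px_2^p)=0$. This follows by factoring out $(x_1x_2)^{\min(p,q)}$, which commutes with $\partial_1$, and using $\partial_1(x_1^m)=\sum_{i+j=m-1}x_1^ix_2^j$ together with $\partial_1(x_2^m)=-\partial_1(x_1^m)$. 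The consequence we need is that every monomial in $\partial_1(x_1^px_2^q)$ has both exponents nonnegative and at most $\max(p,q)-1$.

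Stability under $\partial_1$ then follows at once. For $x_1^ax_2^b\in B_2^N$ we have $\max(a,b)\le N+1$, so both exponents of every monomial in $\partial_1(x_1^ax_2^b)$ are at most $N$. This satisfies the bounds $\le N+1$ for $x_1$ and $\le N$ for $x_2$.

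For $\partial_1^-=-x_1\partial_1x_2$, apply the operators in turn. First, $x_2\cdot x_1^ax_2^b=x_1^ax_2^{b+1}$ has $\max(a,b+1)\le N+1$. Second, by the bound above, $\partial_1$ produces monomials $x_1^ix_2^j$ with $0\le i,j\le N$. Finally, multiplying by $-x_1$ raises the $x_1$-exponent to at most $N+1$ and leaves the $x_2$-exponent at most $N$, so the result lies in $V_2^N$.

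There is no real obstacle here. The only point needing care is the asymmetric bound ($N+1$ for $x_1$ versus $N$ for $x_2$): it is exactly what absorbs the shift by $x_2$ before $\partial_1$ and the shift by $x_1$ after it in $\partial_1^-$.
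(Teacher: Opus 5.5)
Your argument is correct, and it is organized differently from the paper's.

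The paper inducts on $N$. Since $x_1x_2$ commutes with both $\partial_1$ and $\partial_1^-$, a monomial $x_1^ax_2^b$ with $a,b>0$ is reduced to $x_1^{a-1}x_2^{b-1}\in B_2^{N-1}$, using $x_1x_2V_2^{N-1}\subset V_2^N$. Both operators are then evaluated explicitly on the boundary monomials $x_1^a$ ($a\le N+1$) and $x_2^b$ ($b\le N$), which includes writing out $\partial_1^-(x_1^a)$ and $\partial_1^-(x_2^b)$ as sums.

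You instead pull out $(x_1x_2)^{\min(p,q)}$ once, inside a support formula for $\partial_1$ alone. From it you take a single fact: $\partial_1$ maps the span of $x_1^ix_2^j$ with $0\le i,j\le M$ into the span of those with $0\le i,j\le M-1$. You never expand $\partial_1^-$. You only track exponent boxes through the factorization $\partial_1^-=-x_1\circ\partial_1\circ x_2$.

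What your route buys:
\begin{itemize}
\item It needs no induction and no separate $\partial_1^-$ computations.
\item It gives the slightly sharper statement that $\partial_1(V_2^N)$ lies in the span of $x_1^ix_2^j$ with $i,j\le N$.
\item It makes transparent why the box $[0,N+1]\times[0,N]$ has to be asymmetric: multiplication by $x_2$ moves it into the square $[0,N+1]^2$, $\partial_1$ shrinks that to $[0,N]^2$, and multiplication by $x_1$ returns it to the original box.
\end{itemize}
The paper's route, in exchange, records explicit closed formulas for $\partial_1^-$ on pure powers.
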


\begin{proof}
  Since $x_1x_2\in \Sym_2$, multiplication by $x_1x_2$ commutes with operators $\partial_1,\partial_1^-$. If both $a,b>0$ we can factor out
  $x_1^ax_2^b= (x_1x_2) x_1^{a-1}x_2^{b-1}$ and reduce from $N$ to $N-1$.

$N=0$ case: $\partial_1(1)=0$, $\partial_1^-(1)= -x_1\partial_1 x_2(1)=x_1\in V_2^0$, and
$\partial_1 (x_1) = 1\in V_2^0, \partial_1^-(x_1)=0$.

For $a\le N+1$, $\partial_1(x_1^a) \in V_2^N$ since it is a sum of monomials
of total degree $a-1\le N$. For $0<a \le N+1$,
\[ \partial_1^-(x_1^a)=-x_1 \partial_1 (x_1^ax_2 )= - x_1^2x_2 \partial_1(x_1^{a-1}) = - x_1^2 x_2 \sum_{c=0}^{a-2} x_1^c x_2^{a-2-c}.
\]
Inspecting individual monomials in that sum shows that they all belong to $B_2^N$.

For $b\le N$ it is clear that $\partial_1(x_2^b)\in V_2^N$ and
\[  \partial_1^-(x_2^b) = - x_1 \partial_1 (x_2^{b+1}) = x_1 \sum_{c=0}^b x_1^c x_2^{b-c}.
\]
All monomials in this sum are in $V_2^N$.
\end{proof}
We have $V_2^N= R_2\cap (R_2^- x_1^{N+1}x_2^N)$.

\begin{cor}\label{cor_subspace_stable} The subspace $V_n\subset R_n$ is stable under operators $\partial_i$ and
  $\partial_i^-$, for $i=1,\dots, n-1$.
\end{cor}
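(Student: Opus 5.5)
The plan is to reduce Corollary~\ref{cor_subspace_stable} to the two-variable Lemma~\ref{lemma_stable}, applied to the pair of variables $x_i,x_{i+1}$ with the remaining variables treated as scalars.

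Fix $i$ and a basis monomial $m=x_1^{a_1}\cdots x_{n-1}^{a_{n-1}}\in B_n$, so that $0\le a_j\le n-j$ (with $a_n=0$). Write
\[
m=m'\cdot x_i^{a_i}x_{i+1}^{a_{i+1}},
\]
where $m'$ is the product of the factors $x_j^{a_j}$ with $j\ne i,i+1$. Multiplication by $m'$ commutes with $\partial_i$, since $m'$ is $s_i$-invariant. It therefore also commutes with $\partial_i^-=-x_i\partial_i x_{i+1}$. Hence
\[
\partial_i^{\pm}(m)=m'\,\partial_i^{\pm}\bigl(x_i^{a_i}x_{i+1}^{a_{i+1}}\bigr),
\]
and it suffices to check that $\partial_i^{\pm}(x_i^{a_i}x_{i+1}^{a_{i+1}})$ is a combination of monomials $x_i^{c}x_{i+1}^{d}$ with $c\le n-i$ and $d\le n-i-1$. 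The exponents of all other variables are unchanged, so the resulting monomials times $m'$ lie in $B_n$.

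Now set $N=n-i-1\ge 0$. The bounds $a_i\le N+1$ and $a_{i+1}\le N$ say exactly that $x_i^{a_i}x_{i+1}^{a_{i+1}}$ lies in $B_2^N$, after renaming $x_1,x_2$ as $x_i,x_{i+1}$. In the boundary case $i=n-1$ we have $a_n=0$ and $N=0$, which is consistent. Lemma~\ref{lemma_stable}, transported by this renaming, says $V_2^N$ is stable under $\partial_i$ and $\partial_i^-$. This yields exactly the required exponent bounds. It also shows that no $x_n$ appears when $i=n-1$, because $d\le N=0$.

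The only point requiring care is that the exponent bounds in $B_2^N$ are upper bounds only, with nonnegativity automatic inside $R_2$. This matches the description $V_n=R_n\cap(R_n^-\underline{x}_n)$, so nothing more is needed; I do not expect a genuine obstacle.
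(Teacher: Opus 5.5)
Your proposal is correct and is the same argument as the paper's. The paper says only that the corollary "follows at once from Lemma~\ref{lemma_stable}", with an alternative via $\partial_i^-=\tau\partial_i\tau$ from Corollary~\ref{cor_conjugation}. Your factorization $m=m'\,x_i^{a_i}x_{i+1}^{a_{i+1}}$ with $m'$ commuting with $\partial_i^{\pm}$, together with the choice $N=n-i-1$, is exactly the bookkeeping that makes that one-line reduction precise.
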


\begin{proof} This follows at once from Lemma~\ref{lemma_stable} or from Corollary~\ref{cor_conjugation}.
\end{proof}

Let $\wNC_n$ be the subalgebra of endomorphisms of $R_n$ generated by $\partial_1,\dots, \partial_{n-1}$ and $\partial^-_1, \dots, \partial^-_{n-1}$.
It contains $\NC_n$ and $\NC_n^-$ as subalgebras. Corollary~\ref{cor_conjugation} tells us that there is an involution on $\wNC_n$ transposing $\partial_i$ and $\partial_i^-$, $1\le i\le n-1$.

\vspace{0.1in}

\begin{prop}\label{prop_nc_invol}
    $\wNC_n$ has an involution $\tau$ transposing $\partial_i$ and $\partial_i^-$, for $i=1,\dots, n-1$.
\end{prop}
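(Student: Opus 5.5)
The plan is to realize the involution on $\wNC_n$ as conjugation by the involution $\tau$ of $R^{\circ}_n$ from \eqref{eq_map_x}. Concretely, define $\tau(\phi):=\tau\circ\phi\circ\tau$ for any $\kk$-linear endomorphism $\phi$ of $R^{\circ}_n$. This needs two preliminary facts.

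First, $\tau$ is an involution of $R^{\circ}_n$. Since $\psi$ is a ring involution with $\psi(\underline{x}_n)=\underline{x}_n^{-1}$, we get
\[
\tau^2(f)=\underline{x}_n\,\psi(\underline{x}_n\psi(f))=\underline{x}_n\underline{x}_n^{-1}f=f .
\]
Consequently conjugation by $\tau$ is an algebra automorphism of $\End_{\kk}(R^{\circ}_n)$ whose square is the identity.

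Second, by Corollary~\ref{cor_conjugation} this conjugation sends $\partial_i^-$ to $\partial_i$. Conjugating that identity by $\tau$ and using $\tau^2=1$ also gives $\tau\partial_i\tau=\partial_i^-$. So conjugation swaps the two families of generators.

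The point needing care is that $\wNC_n$ was defined as a subalgebra of $\End(R_n)$, whereas $\tau$ does not preserve $R_n$: it sends $1$ to $\underline{x}_n$ but sends $x_1^{n}$ outside $R_n$. To handle this, we work with the subalgebra $A\subset\End_{\kk}(R^{\circ}_n)$ generated by all $\partial_i,\partial_i^-$. Each generator preserves $R_n$, so there is a restriction homomorphism $A\to\End(R_n)$ with image $\wNC_n$. This homomorphism is injective. Indeed, the generators commute with multiplication by $(x_1\cdots x_n)^{\pm1}$ and $R^{\circ}_n=\bigcup_N (x_1\cdots x_n)^{-N}R_n$, so an element of $A$ vanishing on $R_n$ vanishes on all of $R^{\circ}_n$; this is exactly the remark made earlier in the paper that relations hold in $R_n$ iff they hold in $R^{\circ}_n$. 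Hence $A\cong\wNC_n$.

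Conjugation by $\tau$ maps the generating set of $A$ to itself, so it maps $A$ onto $A$. Transporting through $A\cong\wNC_n$ gives an algebra involution of $\wNC_n$ exchanging $\partial_i$ and $\partial_i^-$.

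As an alternative to the localization step, one can restrict everything to $V_n$. By Proposition~\ref{prop_involution_endomorphism} and Corollary~\ref{cor_subspace_stable}, both $\tau$ and the generators preserve $V_n$. This route would additionally need faithfulness of $\wNC_n$ on $V_n$, which is not yet available, so the $R^{\circ}_n$ argument is preferable.
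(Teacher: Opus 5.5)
Your proposal is correct and is the same argument as the paper, whose proof is a one-line appeal to Corollary~\ref{cor_conjugation}, i.e.\ conjugation by $\tau$. Your extra steps supply details the paper leaves implicit: checking $\tau^2=1$, and identifying the subalgebra of $\End_{\kk}(R^{\circ}_n)$ generated by the $\partial_i,\partial_i^-$ with $\wNC_n\subset\End_{\kk}(R_n)$ via restriction, which is what makes conjugation by $\tau$ well defined on $\wNC_n$.
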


\begin{proof}
    This follows at once from Corollary~\ref{cor_conjugation}.
\end{proof}

\subsection{NilHecke algebra and category}
\label{subsec_nilHecke}

The nilHecke algebra $\NH_n$ is the algebra of operators on $R_n$ generated by the divided difference operators $\partial_1, \dots, \partial_{n-1}$ and by operators of multiplication by $x_1, \dots, x_n$. It plays an important role in the geometry of the flag variety and its Schubert cells~\cite{FP98,BL00,Man01,kumar2012kac} and in the categorification of quantum $sl(2)$~\cite{CR08,Lau10,Lau12,KL09,rouquier2012quiver,rouquier20082}. $\NH_n$ has the following defining relations 
\begin{equation}\label{eq_nilHecke_def}
 \begin{array}{ll}
   \partial_i x_j = x_j\partial_i \quad \text{if $j\neq i,i+1$}, &
   \partial_i\partial_j = \partial_j\partial_i \quad \text{if $|i-j|>1$}, \\
  \partial_i^2 = 0,  &
   \partial_i\partial_{i+1}\partial_i = \partial_{i+1}\partial_i\partial_{i+1},  \\
   x_i \partial_i - \partial_i x_{i+1}=1,  &   \partial_i x_i - x_{i+1} \partial_i =1, \\
   x_i x_j =   x_j x_i . &  
  \end{array}
\end{equation}

The nilHecke algebra has a natural diagrammatic interpretation, where $x_i$ is represented by a dot on the $i$-th vertical strand counting from the left and $\partial_i$ by a crossing of the $i$-th and $(i+1)$-st strands: 

\[ \hackcenter{\begin{tikzpicture}[scale=0.375]
        \node at (-2,1) {$x_i \, = \, $}; 
        \draw (0,0)--(0,2);
        \draw[dotted] (0.8,1)--(2.2,1);
        \draw (3,0)--(3,2);
        \draw (5,0)--(5,2);
        \draw(7,0)--(7,2);
        \draw[dotted] (7.8,1)--(9.2,1);
        \draw (10,0)--(10,2);
        \fill (5,1) circle (5pt);
        \node at (5,-0.5) {\tiny $i$};
    \end{tikzpicture}}\ ,\qquad 
    \hackcenter{\begin{tikzpicture}[scale=0.375]
      \node at (-2,1) {$\pd_i \, = \, $};
        \draw (0,0)--(0,2);
        \draw[dotted] (0.8,1)--(2.2,1);
        \draw (3,0)--(3,2);
        \draw (5,0)--(7,2);
        \node at (5,-0.6) {\tiny $i$};
        \draw(7,0)--(5,2);
        \node at (7,-0.6) {\tiny $i\!+\!1$};
        \draw(9,0)--(9,2);
        \draw[dotted] (9.8,1)--(11.2,1);
        \draw (12,0)--(12,2);
    \end{tikzpicture}}\]
Defining relations \eqref{eq_nilHecke_def} have a diagrammatic interpretation that can be found in~\cite[Section~3.1]{Lau10},~\cite[Section~3.4.2]{Lau12} and~\cite[Section~2.1]{KLMS12}, for example.  
In analogy with these diagrammatics, we can introduce a diagram for the negative divided difference operator $\pd_i^-$ as well, depicting it by a crossing of the $i$-th and $(i+1)$-st strands with a small circle around the crossing: 
\begin{equation}\label{eq_circle_crossing}
    \begin{diagram}
        \draw(0,0)--(2,2);
        \draw(0,2)--(2,0);
        \draw (1,1) circle (5pt);
    \end{diagram}
    \coloneqq 
    -\begin{diagram}
        \draw(0,0)--(2,2);
        \draw(0,2)--(2,0);
        \fill (0.5,1.5) circle (5pt);
        \fill (1.5,0.5) circle (5pt);
    \end{diagram}
\end{equation}

The family of the nilHecke algebras $\NH_n$, over all $n\ge 0$, gives rise to a monoidal $\kk$-linear category $\mcNH'$ with a generating object $\mcE$ and hom spaces
\begin{equation}\label{eq_NH_cat_homs}
    \Hom_{\mcNH'}(\mcE^{\otimes n},\mcE^{\otimes m}) \ = \ \begin{cases} \NH_n & \mathrm{if} \ n=m, \\ 0 & \mathrm{otherwise}.
    \end{cases} 
\end{equation}
There are natural inclusions of algebras 
\[
\NH_n\otimes \NH_m \hookrightarrow \NH_{n+m}
\]
that define the tensor product of morphisms in $\mcNH'$. These inclusions are given by placing $n$-strand and $m$-strand diagrams representing products of generators of $\NH_n$ and $\NH_m$ in parallel, producing diagrams with $n+m$ strands that describe particular elements of $\NH_{n+m}$. 

\vspace{0.07in}

The $\kk$-linear monoidal category $\mcNH'$ can also be described by generating morphisms and relations on them. Generating morphisms are given by $x:\mcE\lra \mcE$ and $\partial:\mcE^{\otimes 2}\lra \mcE^{\otimes 2}$. 
Generator $x$ is represented by a dot on a vertical strand and generator $\partial$ by a crossing: 

\[ \hackcenter{\begin{tikzpicture}[scale=0.375]
        \node at (-2,1) {$x \, = \, $}; 
        \draw (0,0)--(0,2);
        \fill (0,1) circle (5pt);
    \end{tikzpicture}}\ \qquad \qquad
    \hackcenter{\begin{tikzpicture}[scale=0.375]
      \node at (-2,1) {$\pd \, = \, $};
        \draw (0,0)--(2,2);
        \draw(2,0)--(0,2);
    \end{tikzpicture}}
    \]
Defining relations are the relations in lines 2 and 3 of \eqref{eq_nilHecke_def} rewritten in the monoidal language: 
\begin{eqnarray*}
     \partial^2 & = & 0, \\ (\partial\otimes \id_\mcE)(\id_\mcE\otimes \partial)(\partial\otimes \id_\mcE) & = &  
    (\id_\mcE\otimes \partial)
    (\partial\otimes \id_\mcE)
    (\id_\mcE\otimes \partial), \\
    (x\otimes \id_\mcE)\partial - \partial (\id_\mcE\otimes x) & =&  1, \\
    \partial (x\otimes \id_\mcE) -
    (\id_\mcE\otimes x) \partial & = & 1. 
\end{eqnarray*}
Relations in lines 1 and 4 of \eqref{eq_nilHecke_def} correspond to  commutativity relations for far-away generating morphisms and they hold automatically in a monoidal category with the above generators.

\vspace{0.07in}

The additive Karoubi envelope 
\begin{equation}\label{eq_KarNH}
\mcNH:=\Kar(\mcNH')
\end{equation}
is given by forming formal finite sums of objects and then adding objects for idempotent endomorphisms. Pick a minimal idempotent $e_n$ in $\NH_n$ for each $n\ge 0$ and denote the object $(\mcE^{\otimes n},e_n)$ by $\mcE^{(n)}$. There is a natural choice for $e_n$ as follows, see~\cite[Section 2.2]{KLMS12}. Consider idempotents 
\begin{equation}\label{eq_id_e2}
e_2 := x_1\partial_1\in \NH_2
, \ \ 
e_{2,i}:= x_i\partial_{i}\in \NH_n, \ 
1\le i\le n-1.
\end{equation}
Idempotent $e_{2,i}$ is given by placing $e_2$ on the $i$-th and $(i+1)$-strands in the diagrammatic presentation for $\NH_n$. Define 
\begin{equation}\label{eq_idemp_en}
e_n =(e_{2,1}e_{2,2}\dots e_{2,n-1})(e_{2,1}e_{2,2}\dots e_{2,n-2})\dots (e_{2,1}e_{2,2})e_{2,1}
\end{equation}
to be the product of $e_{2,i}$'s  corresponding to a particular minimal presentation of the longest permutation $w_0$ of $\{1,\dots, n\}$ as the product of elementary transpositions,  
\[
w_0 = (12)(23)\dots (n-1,n)(12)(23)\dots (n-2,n-1)\dots (12)(23)(12). 
\]
Equivalently, 
$e_n = x_1^{n-1}x_2^{n-2}\dots x_{n-1}\partial_{w_0}$. Due to the Yang-Baxter relation $e_{2,1}e_{2,2}e_{2,1}=e_{2,2}e_{2,1}e_{2,2}$ idempotent $e_n$ can be presented as the product of $e_{2,i}$'s using any minimal presentation of $w_0$ via elementary transpositions. There is a direct sum decomposition of graded left $\NH_2$-modules $\NH_2 = \NH_2 e_2\{2\} \oplus \NH_2 e_2$ and similar for the $\NH_n$ and $e_n$ (see the definition of the grading below). 

Define the ring $\Z\{E\}$ of \emph{integer divided powers} to be the subring of $\Q[E]$ generated by $E^{(n)}:=\frac{E^n}{n!}$, over all $n\ge 0$. Ring $\Z\{E\}$ is a free $\Z$-module with a basis $1,E,E^{(2)},E^{(3)}, \dots$ The following formulas hold in $\Z\{E\}$: 
\begin{equation}
    E^n = n! E^{(n)}, \ \ E^{(n)}E^{(m)} = \binom{n+m}{n}E^{(n+m)}. 
\end{equation}
There is a natural isomorphism between the Grothendieck ring of $\mcNH$ and $\Z\{E\}$
\begin{equation}\label{eq_Groth_NH_nograding}
    K_0(\mcNH) \ \cong \ \Z\{E\} 
\end{equation}
that takes $[\mcE^{(n)}]$ to $E^{(n)}$ and $[\mcE^{\otimes n}]$ to $E^n$. 
In this sense, we say that the monoidal category $\mcNH$ categorifies the ring $\Z\{E\}$. 

\begin{remark}
Categories $\mcNH'$ and $\mcNH$ are symmetric monoidal, with the symmetric structure on  $\mcNH'$ given by the permutation morphism 
$\mcE^{\otimes n}\otimes \mcE^{\otimes m}\lra \mcE^{\otimes m}\otimes \mcE^{\otimes n}$ represented by the permutation endomorphism of $\kk[x_1,\dots, x_{n+m}]$ that transposes the first $n$ with the last $m$ indices of the $x$'s:
\[
x_i \longmapsto x_{m+i}, 1\le i \le n, \ \ x_{n+i}\longmapsto x_i, 1 \le i \le m. 
\]
The latter endomorphism is an element of $\NH_{n+m}$, since the transposition $x_1\mapsto x_2, x_2\mapsto x_1$ is given by $\partial_1 x_1 - x_1 \partial_1\in \NH_2$. Alternatively, it's in $\NH_{n+m}$ since it commutes with the multiplication by any symmetric function in $x_1,\dots, x_{n+m}$. Symmetric monoidal structure on $\mcNH'$ is inherited by its Karoubi envelope $\mcNH$. 
\end{remark}

\vspace{0.07in}

Ring $\Z\{E\}$ has a well-known $q$-deformation that we denote here $\Z_q\{E\}$. It is a $\Z[q,q^{-1}]$-subalgebra of $\Q(q)[E]$ with a basis, as a free $\Z[q,q^{-1}]$-module, given by  
\begin{equation}\label{eq_q_deform}
    E^{[n]}:= \frac{E^n}{[n]!}, \ \ [n]!:=[n]\dots [2][1], \ \ [k]:=\frac{q^k-q^{-k}}{q-q^{-1}}, 
\end{equation}
over all $n\ge 0$. 

To construct the $q$-analogue of \eqref{eq_Groth_NH_nograding} one introduces a grading on $R_n$ by $\deg(x_i)=2$. This induces a grading on homogeneous endomorphisms of $R_n$, with $\deg(\partial_i)=-2$ and the degree of the multiplication by $x_i$ operator equal $2$. Grothendieck ring of the category of finite-dimensional $\Z$-graded vector spaces is naturally isomorphic to $\Z[q,q^{-1}]$. One then works with homogeneous endomorphisms of $\mcE^{\otimes n}$ and uses grading shifts of objects $\mcE^{\otimes n}\{k\}$ to produce the graded version $\mcNH'_q$ of the nilHecke category. Homs in this category are degree $0$ homomorphisms between shifts of $\mcE^{\otimes n}$. Passing to the Karoubi envelope 
\begin{equation}\label{eq_mcNH_q}
\mcNH_q:=\Kar(\mcNH'_q)
\end{equation}
results in a monoidal category over the category of f.d. graded vector spaces and a natural isomorphism of rings 
\begin{equation}\label{eq_iso_K_q}
K_0(\mcNH_q) \cong \Z_q\{E\},
\end{equation}
see~\cite{Lau10,Lau12}. Under this isomorphism
\[
[(\mcE^{\otimes n},e_n)]= q^{\frac{n(n-1)}{2}}E^{[n]}.
\]

\subsection{Double nilCoxeter algebra as a subalgebra of nilHecke}
\label{subsec_subalgebra}

Observe that 
\[\partial_i^-= - x_i \partial_i x_{i+1}\in \NH_n,
\]
and algebras $\NC_n,\NC^-_n$ and $\wNC_n$ are subalgebras of $\NH_n$. 
Generators of $\NH_n$ commute with multiplications by symmetric functions
(elements of $\Sym_n$). Consequently, there is an injective homomorphism
\[ \NH_n \stackrel{\gamma}{\lra} \End_{\Sym_n}(R_n) \cong \End_{\Sym_n}(V_n\otimes \Sym_n)
\cong \End_{\kk}(V_n)\otimes_{\kk} \Sym_n.
\]
The first map takes $\NH_n$ into the algebra of $\Sym_n$-endomorphisms
of $R_n$. The latter is a free graded $\Sym_n$ module, with a basis given by
$B_n$. It is straightforward to show that $\gamma$ is an isomorphism,
resulting in a canonical isomorphism
\[
\NH_n \cong \End_{\kk}(V_n)\otimes_{\kk} \Sym_n
\]
between $\NH_n$ and a matrix algebra of size $n!$ with coefficients in $\Sym_n$.
(Note that $V_n$ has a preferred basis $B_n$, giving a natural identification
with the matrix algebra.)

\vspace{0.07in}

Corollary~\ref{cor_subspace_stable}  implies that the action of $\wNC_n$ on $R_n$
preserves the subspace $V_n$ and gives a natural algebra homomorphism
\begin{equation}\label{eq_hom_wNC}
\rho \ : \ \wNC_n \lra \End_{\kk}(V_n).
\end{equation}
Furthermore, $\wNC_n$ is a subalgebra of $\NH_n$,
and the action of the latter commutes with multiplication by elements of $\Sym_n$.
Since $R_n$ is a free $\Sym_n$-module generated by $V_n$, the following observation follows.
\begin{cor}\label{cor_rho_injective}
  Homomorphism $\rho$ is injective.
\end{cor}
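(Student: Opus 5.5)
The plan is to show that an element of $\wNC_n$ acting by zero on $V_n$ must act by zero on all of $R_n$. Since $\wNC_n$ is defined as a subalgebra of $\End_{\kk}(R_n)$, such an element is then zero in $\wNC_n$.

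Let $a\in\wNC_n$ with $\rho(a)=0$, that is, $a(v)=0$ for every $v\in V_n$. The first step is to check that $a$ is $\Sym_n$-linear as an operator on $R_n$. For this I will use that $\wNC_n\subset\NH_n$, since $\partial_i^-=-x_i\partial_i x_{i+1}\in\NH_n$. The generators $\partial_i$ and the multiplications by $x_j$ each commute with multiplication by any symmetric function $g\in\Sym_n$. For $\partial_i$ this holds because $s_i g=g$, so $\partial_i(gf)=g\,\partial_i(f)$. Hence every element of $\NH_n$, and in particular $a$, commutes with multiplication by elements of $\Sym_n$.

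The second step uses the fact that $R_n$ is a free $\Sym_n$-module with basis $B_n$, which spans $V_n$. Thus every $f\in R_n$ can be written as $f=\sum_{b\in B_n} g_b\, b$ with $g_b\in\Sym_n$. Then
\[
a(f)=\sum_{b\in B_n} g_b\, a(b)=\sum_{b\in B_n} g_b\,\rho(a)(b)=0 .
\]
So $a=0$ as an endomorphism of $R_n$, and therefore $a=0$ in $\wNC_n$, which proves that $\rho$ is injective.

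There is no real obstacle here. The only points that need care are that $\wNC_n$ consists of operators on $R_n$, and not on $R^\circ_n$ or some abstract presentation, and that $\partial_i^-$ lies in $\NH_n$. The freeness statement is taken from the earlier proposition.
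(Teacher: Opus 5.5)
Your proposal is correct and follows the paper's own argument: elements of $\wNC_n\subset\NH_n$ commute with multiplication by $\Sym_n$, and $R_n$ is a free $\Sym_n$-module with basis $B_n\subset V_n$, so an element acting by zero on $V_n$ acts by zero on all of $R_n$. You simply spell out the details that the paper leaves implicit.
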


We next look for commutation relations between $\partial_i$'s and $\partial_j^-$'s. Denote 
\[
\partial_{i_1,\dots, i_k}:= \partial_{i_1}\dots \partial_{i_k}, \ \ \partial^{-}_{i_1,\dots, i_k}:= \partial^{-}_{i_1}\dots \partial^{-}_{i_k}. 
\]
In particular, $\partial_{i,i+1,i}=\partial_{i+1,i,i+1}$ and $\partial^-_{i,i+1,i}=\partial^-_{i+1,i,i+1}$. 
\begin{prop}\label{prop_relations}
  The following relations hold:
  \begin{eqnarray}
  \label{eq_shorter}
    \partial_i \partial_j^-  & = & \partial_j^- \partial_i \ \ \mathrm{if} \ |i-j|>1, \\
    1 & = & \partial_i \partial_i^- + \partial_i^-\partial_i, \\
    \label{eq_short}
    \partial_i \partial_{i+1}^- & = & (\partial_i^- +\partial_{i+1}^-)\partial_{i+1}  + \partial_i^- \partial_i -1=\pd_i^-(\pd_{i+1} +\pd_i)+\pd^-_{i+1}\pd_{i+1}-1, \\
    \label{eq_long}
    \partial_{i+1}\partial_i^-  & = & \partial_i^-\partial_{i+1} + 
    [\partial_i^-,\partial_{i+1}^-]\,  [\partial_i,\partial_{i+1}]+2\partial_{i,i+1,i}^-\partial_{i,i+1,i}
  \end{eqnarray}
\end{prop}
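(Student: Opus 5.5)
All four relations are identities between operators on $R_n$, and each involves at most three consecutive strands $i,i+1,i+2$. My plan is to reduce to $n\le 3$ and verify them inside the nilHecke algebra, using $\partial_i^-=-x_i\partial_i x_{i+1}$ together with the defining relations \eqref{eq_nilHecke_def}.

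The first relation is immediate. For $|i-j|>1$, the operator $\partial_j^-$ involves only $x_j,x_{j+1},\partial_j$, and each of these commutes with $\partial_i$.

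For the second relation I compute in $\NH_2$. Using $x_1\partial_1=1+\partial_1x_2$ and $\partial_1 x_1=1+x_2\partial_1$:
\[
\partial_1\partial_1^-=-\partial_1x_1\partial_1x_2=-(1+x_2\partial_1)\partial_1x_2=-\partial_1x_2,
\]
and
\[
\partial_1^-\partial_1=-x_1\partial_1x_2\partial_1=-x_1\partial_1(x_1\partial_1-1)=-x_1(1+x_2\partial_1)\partial_1+x_1\partial_1=x_1\partial_1.
\]
Hence the sum is $x_1\partial_1-\partial_1x_2=1$.

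Relations \eqref{eq_short} and \eqref{eq_long} live in $\NH_3$. Here I would use the faithful realization from Corollary~\ref{cor_rho_injective}. Since $\wNC_3\subset\NH_3$ acts faithfully on the $6$-dimensional space $V_3$ with basis $B_3=\{1,x_1,x_2,x_1^2,x_1x_2,x_1^2x_2\}$, and all operators involved preserve $V_3$ (Corollary~\ref{cor_subspace_stable}), it suffices to check that both sides agree as $6\times 6$ matrices.

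So the key steps are:
\begin{itemize}
\item write the matrices of $\partial_1,\partial_2,\partial_1^-,\partial_2^-$ on $B_3$;
\item form the products on both sides of each relation;
\item compare.
\end{itemize}
The matrices of the negative operators come cheaply from Corollary~\ref{cor_conjugation}: $\partial_i^-=\tau\partial_i\tau$, and $\tau$ permutes the monomials of $B_3$ by $x^a\mapsto \underline{x}_3\psi(x^a)$. So the matrix of $\partial_i^-$ is the matrix of $\partial_i$ conjugated by an explicit permutation.

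As a consistency check before the full computation, I would verify that the two forms of the right-hand side of \eqref{eq_short} agree. Their difference is
\[
\partial_{i+1}^-\partial_{i+1}-\partial_i^-\partial_{i+1}+\partial_{i+1}^-\partial_{i+1}\ \text{vs}\ \partial_i^-\partial_i,
\]
and this should follow from the second relation on strands $i$ and $i+1$. Failing that, I would simply check both forms in the matrix computation.

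Alternatively, \eqref{eq_short} can be derived symbolically in $\NH_3$. Expand
\[
\partial_1\partial_2^-=-\partial_1x_2\partial_2x_3,
\]
and push $\partial_1$ through $x_2$ via $\partial_1x_2=x_1\partial_1-1$. Then re-express the resulting terms $x_1\partial_1\partial_2x_3$ and $-\partial_2x_3$ using the identities $\partial_i^-\partial_i=x_i\partial_i$ and $\partial_i\partial_i^-=-\partial_ix_{i+1}$ found above.

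The main obstacle is \eqref{eq_long}. It contains degree-zero products of length up to six, namely $\partial^-_{1,2,1}\partial_{1,2,1}$ and the commutator product, so a symbolic derivation would be messy. Here I would rely entirely on the $6\times6$ matrix check on $V_3$, justified by Corollary~\ref{cor_rho_injective}.

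To organize that check, I would note that every term has degree $0$. The operators therefore act block-diagonally on the degree pieces of $V_3$, which are spanned by $\{1\}$, $\{x_1,x_2\}$, $\{x_1^2,x_1x_2\}$ and $\{x_1^2x_2\}$. The verification then reduces to comparing small blocks of sizes $1,2,2,1$.

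One useful fact for this: $\partial^-_{1,2,1}\partial_{1,2,1}$ is the idempotent projecting onto the top monomial $\underline{x}_3$ along the other monomials. Indeed, $\partial_{w_0}$ kills everything in $V_3$ except $\underline{x}_3\mapsto 1$, and $\partial^-_{w_0}(1)=\tau\partial_{w_0}\tau(1)=\tau(1)=\underline{x}_3$ up to sign. This simplifies the bookkeeping considerably.
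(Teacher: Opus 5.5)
Your plan is correct, but for the three-strand relations you take a different verification route from the paper. The paper rewrites every product as a normal form $\sum_w f_w(x)\,\partial_w$ in $\NH_3$, using the nilHecke commutation relations, and compares coefficients. You instead use faithfulness of $\wNC_3$ on $V_3$ (Corollary~\ref{cor_rho_injective}, which only needs $\Sym_3$-linearity and that $B_3$ is a $\Sym_3$-basis). You get $\partial_i^-$ by conjugating $\partial_i$ with the permutation $\tau$ of $B_3$, and compare degree blocks of sizes $1,2,2,1$.

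What each approach buys:
\begin{itemize}
\item Your route is a finite, mechanical check with no rewriting. The observation that $\partial^-_{1,2,1}\partial_{1,2,1}$ projects onto $\kk\,\underline{x}_3$ makes the top block of \eqref{eq_long} immediate: the left side gives $0$ and the right side gives $-x_1^2x_2-x_1^2x_2+2x_1^2x_2=0$.
\item The paper's normal forms are more explanatory. For example, they show $\partial_2\partial_1^--\partial_1^-\partial_2=x_1^2[\partial_1,\partial_2]$, which is exactly what the correction terms in \eqref{eq_long} must reproduce.
\end{itemize}
Carried out, your matrix check does confirm both \eqref{eq_short} and \eqref{eq_long}. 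Your alternative symbolic treatment of \eqref{eq_short} is essentially the paper's method.

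There are a few slips to fix.
\begin{itemize}
\item In your derivation of $\partial_1^-\partial_1$, the second and fourth equalities are false. Since $\partial_1x_1\partial_1=\partial_1$, the expression $-x_1\partial_1(x_1\partial_1-1)$ equals $0$, and so does $-x_1(1+x_2\partial_1)\partial_1+x_1\partial_1$. The correct line is $-x_1(\partial_1x_2)\partial_1=-x_1(x_1\partial_1-1)\partial_1=x_1\partial_1$, so your conclusion $\partial_1^-\partial_1=x_1\partial_1$ still stands.
\item Your consistency check on \eqref{eq_short} is misdescribed. Both forms of the right-hand side expand to the same sum $\partial_i^-\partial_{i+1}+\partial_{i+1}^-\partial_{i+1}+\partial_i^-\partial_i-1$, so they agree without any relation. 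The difference you wrote down is not correct.
\item In the symbolic sketch, the terms are actually $-x_1\partial_1\partial_2x_3+\partial_2x_3$. To finish, use $\partial_2x_3=x_2\partial_2-1$, $x_1\partial_1x_2=-\partial_1^-$ and $x_i\partial_i=\partial_i^-\partial_i$. This gives $\partial_1^-\partial_2+\partial_1^-\partial_1+\partial_2^-\partial_2-1$.
\item In the even case $\partial^-_{1,2,1}(1)=\underline{x}_3$ exactly, with no sign ambiguity.
\end{itemize}
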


\begin{proof} 
The relations are established by explicit computations in the nilHecke algebra, viewing $\wNC_n$ as a subalgebra of $\NH_n$. 
Let us check the last relation. For simplicity we carry out the computation on three strings, with dots on the three strings being $x_1=x, x_2=y, x_3=z$. Compute:
\begin{align*}
    \pd_2\pd_1^-&=-x^2\pd_2\pd_1+x\pd_2,\\
    \pd_1^-\pd_2&=-x^2\pd_1\pd_2+x\pd_2,\\
    \pd_{1,2,1}^-\pd_{1,2,1}&=x^2y\pd_{1,2,1},\\
    \pd_1^-\pd_2^-\pd_1\pd_2&=-(x^3+x^2y+xy^2)\pd_{1,2,1}+(x^2+xy)\pd_1\pd_2,\\
    \pd_2^-\pd_1^-\pd_1\pd_2&=-xy^2\pd_{1,2,1}+xy\pd_1\pd_2,\\
    \pd_1^-\pd_2^-\pd_2\pd_1&=-x^3\pd_{1,2,1}+x^2\pd_2\pd_1+xy\pd_2\pd_1, \\
    \pd_2^-\pd_1^-\pd_2\pd_1&=-x^2y\pd_{1,2,1}+xy\pd_2\pd_1,
\end{align*}
so that one can check the last equation is true directly:
\begin{align*}
    &\quad\ \pd_1^-\pd_2+\pd_1^-\pd_2^-\pd_1\pd_2-\pd_2^-\pd_1^-\pd_1\pd_2-\pd_1^-\pd_2^-\pd_2\pd_1+\pd_2^-\pd_1^-\pd_2\pd_1+2\pd_{1,2,1}^-\pd_{1,2,1}\\
    &=(-x^2\pd_1\pd_2+x\pd_2)+(-(x^3+x^2y+xy^2)\pd_{1,2,1}+(x^2+xy)\pd_1\pd_2)-(-xy^2\pd_{1,2,1}+xy\pd_1\pd_2)\\
    &\quad\quad\quad\quad -(-x^3\pd_{1,2,1}+x^2\pd_2\pd_1+xy\pd_2\pd_1)+(-x^2y\pd_{1,2,1}+xy\pd_2\pd_1)+(2x^2y\pd_{1,2,1})\\
    &=-x^2\pd_2\pd_1+x_1\pd_2=\pd_2\pd_1^-
\end{align*}
\end{proof}
These relations allow to move  $\partial_i$'s to the right of the $\partial_j^-$'s in any product of these operators. 
From Proposition~\ref{prop_nc_invol} we obtain analogous relations for moving $\partial_i^-$ to the right of $\partial_j$. 
The relations can be encoded diagrammatically.  Denote $\partial_i$, respectively $\partial_i^-$, by a crossing of the $i$-th and $(i+1)$-st strands, respectively by a crossing enveloped by a small circle, see Figure~\ref{fig_crossings}. 

\begin{figure}[!ht]
\[\hackcenter{\begin{tikzpicture}[scale=0.375]
      \node at (-2,1) {$\pd_i \, = \, $};
        \draw (0,0)--(0,2);
        \draw[dotted] (0.8,1)--(2.2,1);
        \draw (3,0)--(5,2);
        \node at (3,-0.6) {\tiny $i$};
        \draw(5,0)--(3,2);
        \node at (5,-0.6) {\tiny $i\!+\!1$};
        \draw[dotted] (5.8,1)--(7.2,1);
        \draw (8,0)--(8,2);
    \end{tikzpicture}}
    \qquad\qquad 
    \hackcenter{\begin{tikzpicture}[scale=0.375]
      \node at (-2,1) {$\pd_i^- \, = \, $};
        \draw (0,0)--(0,2);
        \draw[dotted] (0.8,1)--(2.2,1);
        \draw (3,0)--(5,2);
        \draw (4,1) circle (5pt);
        \node at (3,-0.6) {\tiny $i$};
        \draw(5,0)--(3,2);
        \node at (5,-0.6) {\tiny $i\!+\!1$};
        \draw[dotted] (5.8,1)--(7.2,1);
        \draw (8,0)--(8,2);
    \end{tikzpicture}}
    \]
\caption{Diagrams for the generators $\partial_i,\partial_i^-$.}
\label{fig_crossings}
\end{figure}

Crossings and circled crossings satisfy the defining relations \eqref{eq_demazure} in the nilCoxeter algebra, see Corollary~\ref{cor_relations_minus} and Figure~\ref{nc_relations}. 

\begin{figure}
\begin{align*}
    \begin{diagram}
        \draw (0,0) ..controls(2.5,2).. (0,4);
        \draw (2,0)..controls(-0.5,2)..(2,4);
    \end{diagram}
    &=0,\qquad \begin{diagram}
        \draw (0,0)--(4,4);
    \draw (4,0)--(0,4);
    \draw (2,0) ..controls(-0.5,2).. (2,4);
    \end{diagram}
    =
    \begin{diagram}
        \draw (0,0)--(4,4);
    \draw (4,0)--(0,4);
    \draw (2,0) ..controls(4.5,2).. (2,4);
    \end{diagram},\qquad 
    \begin{raisediagram}[-0.5em]
        \node at (0,1.5) {\tiny $\cdots$};
        \draw (3-2,0) ..controls(4-2,1.5).. (4-2,3);
        \draw (4-2,0)..controls(3-2,1.5)..(3-2,3);
        \node at (3,1.5) {\tiny $\cdots$};
        \draw (8-4,3)..controls(9-4,1.5)..(9-4,0);
        \draw (9-4,3)..controls(8-4,1.5)..(8-4,0);
        \node at (6,1.5) {\tiny $\cdots$};
        \node at (1,-0.5) {\tiny $i$};
        \node at (4,-0.5) {\tiny $j$};
    \end{raisediagram}=\begin{raisediagram}[-0.5em]
        \node at (0,1.5) {\tiny $\cdots$};
        \draw (3-2+3,0) ..controls(4-2+3,1.5).. (4-2+3,3);
        \draw (4-2+3,0)..controls(3-2+3,1.5)..(3-2+3,3);
        \node at (3,1.5) {\tiny $\cdots$};
        \draw (8-4-3,3)..controls(9-4-3,1.5)..(9-4-3,0);
        \draw (9-4-3,3)..controls(8-4-3,1.5)..(8-4-3,0);
        \node at (6,1.5) {\tiny $\cdots$};
        \node at (1,-0.5) {\tiny $i$};
        \node at (4,-0.5) {\tiny $j$};
    \end{raisediagram}
\end{align*}

\begin{align*}
    \begin{diagram}
        \draw (0,0) ..controls(2.5,2).. (0,4);
        \draw (2,0)..controls(-0.5,2)..(2,4);
        \draw (1,0.83) circle (5pt);
        \draw(1,4-0.83) circle (5pt);
    \end{diagram}
    &=0,\qquad \begin{diagram}
        \draw (0,0)--(4,4);
    \draw (4,0)--(0,4);
    \draw (2,0) ..controls(-0.5,2).. (2,4);
    \draw (0.9, 0.91) circle (5pt);
    \draw (0.9,4-0.91) circle (5pt);
    \draw (2,2) circle (5pt);
    \end{diagram}
    =
    \begin{diagram}
        \draw (0,0)--(4,4);
    \draw (4,0)--(0,4);
    \draw (2,0) ..controls(4.5,2).. (2,4);
    \draw (2,2) circle (5pt);
    \draw (4-0.9,4-0.91) circle (5pt);
    \draw (4-0.9, 0.91) circle (5pt);
    \end{diagram},\qquad 
    \begin{raisediagram}[-0.5em]
        \node at (0,1.5) {\tiny $\cdots$};
        \draw (3-2,0) ..controls(4-2,1.5).. (4-2,3);
        \draw (4-2,0)..controls(3-2,1.5)..(3-2,3);
        \node at (3,1.5) {\tiny $\cdots$};
        \draw (8-4,3)..controls(9-4,1.5)..(9-4,0);
        \draw (9-4,3)..controls(8-4,1.5)..(8-4,0);
        \node at (6,1.5) {\tiny $\cdots$};
        \draw (1.5,0.75) circle (5pt);
        \draw (4.5, 3-0.75) circle (5pt);
        \node at (1,-0.5) {\tiny $i$};
        \node at (4,-0.5) {\tiny $j$};
    \end{raisediagram}=\begin{raisediagram}[-0.5em]
        \node at (0,1.5) {\tiny $\cdots$};
        \draw (3-2+3,0) ..controls(4-2+3,1.5).. (4-2+3,3);
        \draw (4-2+3,0)..controls(3-2+3,1.5)..(3-2+3,3);
        \node at (3,1.5) {\tiny $\cdots$};
        \draw (8-4-3,3)..controls(9-4-3,1.5)..(9-4-3,0);
        \draw (9-4-3,3)..controls(8-4-3,1.5)..(8-4-3,0);
        \node at (6,1.5) {\tiny $\cdots$};
        \draw (1.5,3-0.75) circle (5pt);
        \draw (4.5, 0.75) circle (5pt);
        \node at (1,-0.5) {\tiny $i$};
        \node at (4,-0.5) {\tiny $j$};
    \end{raisediagram}
\end{align*}
\caption{NilCoxeter algebra relations on crossings and circled crossings.}
\label{nc_relations}
\end{figure}

Relations in Proposition~\ref{prop_relations} are shown in Figure~\ref{fig_more_rels}.  

\begin{figure}[!ht] 
\[
\begin{raisediagram}[-0.5em]
        \node at (0,1.5) {\tiny $\cdots$};
        \draw (3-2,0) ..controls(4-2,1.5).. (4-2,3);
        \draw (4-2,0)..controls(3-2,1.5)..(3-2,3);
        \node at (3,1.5) {\tiny $\cdots$};
        \draw (8-4,3)..controls(9-4,1.5)..(9-4,0);
        \draw (9-4,3)..controls(8-4,1.5)..(8-4,0);
        \node at (6,1.5) {\tiny $\cdots$};
        \draw (1.5,0.75) circle (5pt);
        \node at (1,-0.5) {\tiny $i$};
        \node at (4,-0.5) {\tiny $j$};
    \end{raisediagram}=\begin{raisediagram}[-0.5em]
        \node at (0,1.5) {\tiny $\cdots$};
        \draw (3-2+3,0) ..controls(4-2+3,1.5).. (4-2+3,3);
        \draw (4-2+3,0)..controls(3-2+3,1.5)..(3-2+3,3);
        \node at (3,1.5) {\tiny $\cdots$};
        \draw (8-4-3,3)..controls(9-4-3,1.5)..(9-4-3,0);
        \draw (9-4-3,3)..controls(8-4-3,1.5)..(8-4-3,0);
        \node at (6,1.5) {\tiny $\cdots$};
        \draw (1.5,3-0.75) circle (5pt);
        \node at (1,-0.5) {\tiny $i$};
        \node at (4,-0.5) {\tiny $j$};
    \end{raisediagram},\qquad 
\begin{diagram}
    \draw (0,0)--(0,4);
    \draw (2,0)--(2,4);
\end{diagram}
\ =
\begin{diagram}
    \draw (0,0)..controls(2.5,2)..(0,4);
    \draw (2,0)..controls(-0.5,2)..(2,4);
    \draw (1,0.82) circle (5pt);
\end{diagram}
+
\begin{diagram}
    \draw (0,0)..controls(2.5,2)..(0,4);
    \draw (2,0)..controls(-0.5,2)..(2,4);
    \draw (1,3.18) circle (5pt);
\end{diagram}
\]
\vspace{0.5em}
\[
\begin{diagram}
    \draw (0,3)--(4,0);
    \draw plot [smooth,tension=0.5] coordinates { (0,0) (0.2,1.5) (2,3)};
    \draw plot [smooth,tension=0.5] coordinates { (4,3) (3.8,1.5) (2,0)};
    \draw (3,0.75) circle (5pt);
\end{diagram}
=
\begin{diagram}
    \draw (0,3)--(4,0);
    \draw plot [smooth,tension=0.5] coordinates { (0,0) (0.2,1.5) (2,3)};
    \draw plot [smooth,tension=0.5] coordinates { (4,3) (3.8,1.5) (2,0)};
    \draw (1,2.25) circle (5pt);
\end{diagram}
+
\begin{diagram}
    \draw (0,0) ..controls(2.5,1.5).. (0,3);
    \draw (2,0)..controls(-0.5,1.5)..(2,3);
    \draw (4,0)--(4,3);
    \draw (1,2.38) circle (5pt);
\end{diagram}
\ +\ 
\begin{diagram}
    \draw (2,0) ..controls(4.5,1.5).. (2,3);
    \draw (4,0)..controls(1.5,1.5)..(4,3);
    \draw (0,0)--(0,3);
    \draw (3,2.38) circle (5pt);
\end{diagram}
-\ 
\begin{diagram}
    \draw (0,0)--(0,3);
    \draw (2,0)--(2,3);
    \draw (4,0)--(4,3);
\end{diagram}
\]
\vspace{1em}
\[
\begin{diagram}
    \draw (0,0)--(4,3);
    \draw plot [smooth,tension=0.5] coordinates { (4,0) (4-0.2,1.5) (2,3)};
    \draw plot [smooth,tension=0.5] coordinates { (0,3) (4-3.8,1.5) (2,0)};
    \draw (4-3,0.75) circle (5pt);
\end{diagram}
=
\begin{diagram}
    \draw (0,3)--(4,0);
    \draw plot [smooth,tension=0.5] coordinates { (0,0) (0.2,1.5) (2,3)};
    \draw plot [smooth,tension=0.5] coordinates { (4,3) (3.8,1.5) (2,0)};
    \draw (1,2.25) circle (5pt);
\end{diagram}
+
\begin{diagram}
    \draw plot [smooth,tension=0.5] coordinates {(0,0) (0.4,1) (3.6,3) (4,4)};
    \draw plot [smooth,tension=0.5] coordinates {(2,0) (3.6,1) (3.6,2) (0,4)};
    \draw plot [smooth,tension=0.5] coordinates {(4,0) (0.4,2) (0.4,3) (2,4)};
    \draw (1.05,3.45) circle (5pt);
    \draw (2.85,2.5) circle (5pt);
\end{diagram}
-
\begin{diagram}
    \draw plot [smooth,tension=0.5] coordinates {(0,0) (0.2,1) (2,2) (0.2,3) (0,4)};
    \draw plot [smooth,tension=0.5] coordinates {(2,0) (3.8,1) (3.8,3) (2,4)};
    \draw plot [smooth,tension=0.5] coordinates {(4,0) (0,2) (4,4)};
    \draw (0.85,2.65) circle (5pt);
    \draw (2.9,3.6) circle (5pt);
\end{diagram}
-
\begin{diagram}
    \draw plot [smooth,tension=0.5] coordinates {(4-0,0) (4-0.2,1) (4-2,2) (4-0.2,3) (4-0,4)};
    \draw plot [smooth,tension=0.5] coordinates {(4-2,0) (4-3.8,1) (4-3.8,3) (4-2,4)};
    \draw plot [smooth,tension=0.5] coordinates {(4-4,0) (4-0,2) (4-4,4)};
    \draw (4-0.85,2.65) circle (5pt);
    \draw (4-2.9,3.6) circle (5pt);
\end{diagram}
+
\begin{diagram}
    \draw plot [smooth,tension=0.5] coordinates {(4-0,0) (4-0.4,1) (4-3.6,3) (4-4,4)};
    \draw plot [smooth,tension=0.5] coordinates {(4-2,0) (4-3.6,1) (4-3.6,2) (4-0,4)};
    \draw plot [smooth,tension=0.5] coordinates {(4-4,0) (4-0.4,2) (4-0.4,3) (4-2,4)};
    \draw (4-1.05,3.45) circle (5pt);
    \draw (4-2.85,2.5) circle (5pt);
\end{diagram}
+2
\begin{diagram}
    \draw plot [smooth,tension=0.5] coordinates {(0,0) (3.8,2) (0,4)};
    \draw plot [smooth,tension=0.5] coordinates {(2,0) (0.2,1) (2,2) (0.2,3) (2,4)};
    \draw plot [smooth,tension=0.5] coordinates {(4,0) (0.2,2) (4,4)};
    \draw (0.9,2.6) circle (5pt);
    \draw (2,3.15) circle (5pt);
    \draw (1.05,3.58) circle (5pt);
\end{diagram}
\]
\caption{Diagrammatics for Proposition~\ref{prop_relations} relations.}
\label{fig_more_rels}
\end{figure}

\begin{remark}
Relations \eqref{eq_short} and \eqref{eq_long} have more terms than any relation in the standard set of defining relations for the nilHecke algebra. Double nilCoxeter algebra $\wNC_n$ has more complicated relations, in a sense, than $\NH_n$. It is often convenient to do computations in $\wNC_n$ via its embedding in $\NH_n$. Generators  $\NH_n$ are more local than the negative divided difference generator of $\wNC_n$, since $\partial_i^-=-x_i\partial_ix_{i+1}$ is a signed product of three generators of $\NH_n$. 
\end{remark}

\begin{example} Algebras $\wNC_1\cong \kk\cong \wNC_0$. Algebra $\wNC_2$ has
  generators $\partial_1,\partial_1^-$ and defining relations
  \[\partial_1\partial_1=0, \ \ \partial_1^-\partial_1^-=0, \ \
  \partial_1\partial_1^- + \partial_1^-\partial_1 = 1.
  \]
  It is isomorphic to the matrix algebra $\Mat_2(\kk)$, via the map
  \begin{equation}
\partial_1 \mapsto \begin{pmatrix} 0 & 1 \\ 0 & 0 \end{pmatrix}, \
\partial_1^- \mapsto \begin{pmatrix} 0 & 0 \\ 1 & 0 \end{pmatrix}, \
\partial_1\partial_1^- \mapsto
\begin{pmatrix} 1 & 0 \\ 0 & 0 \end{pmatrix}, \
  \partial_1^-\partial_1 \mapsto
  \begin{pmatrix} 0 & 0 \\ 0 & 1 \end{pmatrix}.
  \end{equation}
  Note that elements $\partial_1\partial_1^-$ and $\partial_1^-\partial_1$
  are conjugate minimal idempotents in this algebra.
\end{example}
 
\begin{cor} \label{cor_product_form} Any element of $\wNC_n$ is a $\kk$-linear combination of elements
  of the form $\partial^-_{w_1}\partial_{w_2}$ for $w_1,w_2\in S_n$.
\end{cor}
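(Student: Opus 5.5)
The plan is a straightening argument. Since $\wNC_n$ is generated by $\partial_1,\dots,\partial_{n-1}$ and $\partial^-_1,\dots,\partial^-_{n-1}$, every element is a linear combination of words in these generators. Let $W$ be the $\kk$-span of all products $\partial^-_{w_1}\partial_{w_2}$ with $w_1,w_2\in S_n$. Because $\NC_n$ and $\NC_n^-$ have bases $\{\partial_w\}$ and $\{\partial^-_w\}$, $W$ equals the span of all products $a\,b$ with $a$ a word in the $\partial^-_j$'s and $b$ a word in the $\partial_i$'s. It therefore suffices to show that $W$ is stable under left multiplication by each $\partial^-_j$ and each $\partial_i$; then $W$ contains $1$ and is a left ideal, so $W=\wNC_n$.

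Stability under $\partial^-_j$ is immediate, since $\partial^-_j\partial^-_{w_1}$ is either $0$ or $\partial^-_{s_jw_1}$. For $\partial_i$, I would show by induction on the length $\ell(w)$ that $\partial_i\partial^-_w\in W$ for every $w\in S_n$. Multiplying on the right by $\partial_{w_2}$ then keeps us in $W$, because $W$ is visibly closed under right multiplication by $\NC_n$.

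The base case $\ell(w)=0$ is trivial. For the inductive step, write $\partial^-_w=\partial^-_j\partial^-_{w'}$ with $\ell(w')=\ell(w)-1$. Use the relations of Proposition~\ref{prop_relations}, together with their mirror versions (Proposition~\ref{prop_nc_invol} applied to the relations of that proposition, or the relations for $\partial_{i}\partial^-_{i-1}$ obtained by the symmetry reversing strand order), to rewrite $\partial_i\partial^-_j$ as a combination of terms of the form $\partial^-_{u}\partial_{v}$ with every $\ell(v)\le 3$:
\begin{itemize}
\item when $|i-j|>1$, $\partial_i\partial^-_j=\partial^-_j\partial_i$;
\item when $i=j$, $\partial_i\partial^-_i=1-\partial^-_i\partial_i$;
\item when $j=i\pm1$, use \eqref{eq_short} or \eqref{eq_long}.
\end{itemize}
Then $\partial_i\partial^-_w$ becomes a combination of terms $\partial^-_u\,\partial_v\,\partial^-_{w'}$. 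Applying induction to move the $\partial_v$ past $\partial^-_{w'}$ needs a stronger hypothesis: $X\,\partial^-_{w'}\in W$ for any $X\in\NC_n$. I would therefore induct on $\ell(w')$ with the statement ``$\NC_n\,\partial^-_{w'}\subset W$''. This gives $\partial_v\partial^-_{w'}\in W$, and left multiplication by $\partial^-_u$ preserves $W$.

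The main obstacle is organizing this induction properly. The rewriting of $\partial_i\partial^-_j$ produces terms with several $\partial$'s on the right, so naive induction on word length need not terminate. The stronger hypothesis fixes this: the claim ``$\NC_n\,\partial^-_{w'}\subset W$ for all $w'$ with $\ell(w')<k$'' is used to prove $\partial_i\partial^-_w\in W$ for $\ell(w)=k$. Iterating left multiplication by single $\partial_i$'s, and using that $W$ is preserved by left multiplication by $\partial^-$'s, then gives $\NC_n\,\partial^-_w\subset W$ at length $k$, which completes the induction.

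A second point to verify is that the needed mirror relation for $\partial_i\partial^-_{i-1}$ is available. It follows from \eqref{eq_short} and \eqref{eq_long} by the diagram-reflection symmetry $x_k\mapsto -x_{n+1-k}$, $\partial_k\mapsto \partial_{n-k}$, which should send $\partial^-_k$ to $\partial^-_{n-k}$. Alternatively, one checks it directly in $\NH_3$, as in the proof of Proposition~\ref{prop_relations}.

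As a fallback, one can argue by dimension count. The $n!^2$ products $\partial^-_{w_1}\partial_{w_2}$ act on $V_n$ through $\rho$ (Corollary~\ref{cor_rho_injective}). If they are linearly independent, they span all of $\End_\kk(V_n)\supseteq\rho(\wNC_n)$, which gives the statement. Independence could be shown by a triangularity argument on the basis $B_n$. I would keep this only as a backup, since the straightening argument is more direct.
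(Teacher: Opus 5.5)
Your reduction is correct, and it is the paper's idea. $W$ is stable under left multiplication by the $\partial^-_j$ and under right multiplication by $\NC_n$, so everything comes down to showing $\partial_i\partial^-_w\in W$ for all $i,w$. The paper's justification is the same straightening via Proposition~\ref{prop_relations}; its termination bookkeeping is carried out in the proof of Proposition~\ref{prop_wNC_def_rel}.

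\textbf{The gap is in your termination argument.} From the hypothesis ``$\NC_n\,\partial^-_{w'}\subset W$ for $\ell(w')<k$'' you do get $\partial_i\partial^-_w\in W$ for $\ell(w)\le k$. The next step, ``iterating left multiplication by single $\partial_i$'s'', does not follow. By homogeneity your rewriting of $\partial_i\partial^-_w$ produces terms $\partial^-_u\partial_v$ with $\ell(u)=\ell(w)-1+\ell(v)$, and $\ell(v)$ is generally positive. For example, \eqref{eq_long} puts $2\partial^-_{i,i+1,i}\partial_{i,i+1,i}$ into $\partial_{i+1}\partial^-_i$. Applying the next $\partial_{i'}$ then requires $\partial_{i'}\partial^-_u\in W$ with $\ell(u)>k$, which neither your hypothesis nor the single-step result covers. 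Concretely, the case $k=1$ already needs $\partial_1\partial_2\partial^-_1\in W$. After expanding $\partial_2\partial^-_1$, this needs $\partial_1\partial^-_{1,2,1}\in W$, a length-$3$ instance. So the iteration tacitly assumes $\partial_iW\subset W$, which is what you are proving. Upward induction on the length of the $\partial^-$-word is circular, because straightening lengthens that word.

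\textbf{Fix.} Termination comes from boundedness: the $\partial$-word accumulating on the right only grows in weak order and has length at most $\binom n2$. One way to organize it:
\begin{itemize}
\item Claim: $\partial_i\,\partial^-_w\,\partial_y$ lies in the span of the $\partial^-_a\partial_b$ with $y\le_L b$. Prove this by induction on $(\binom n2-\ell(y),\ell(w))$ in lexicographic order.
\item Write $\partial^-_w=\partial^-_j\partial^-_{w'}$, expand $\partial_i\partial^-_j$, and push the resulting $\partial$'s one at a time into $\partial^-_{w'}\partial_y$.
\item After each push, either the right-hand $\partial$-part has strictly grown, or it is still $\partial_y$. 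In the latter case, by degree, the $\partial^-$-word to be crossed next is strictly shorter than $w$. Either way the induction applies.
\end{itemize}
This is the mirror image of the argument in the proof of Proposition~\ref{prop_wNC_def_rel}. There one right-multiplies $\partial^-_w\partial_u$ by $\partial^-_i$ and inducts on $(\binom n2-\ell(w),\ell(u))$.

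\textbf{Your fallback also works}, provided you use Schubert bases rather than $B_n$. One has $\partial_u\mfS_w=0$ unless $w=tu$ with lengths adding, and $\partial_u\mfS_u=1$. Suppose $\sum c_{v,u}\partial^-_v\partial_u$ acts by zero on $V_n$. Evaluate it at $\mfS_w$, where $w$ has minimal length among the $u$ with some $c_{v,u}\neq0$. This gives $\sum_v c_{v,w}\mfS^-_v=0$, contradicting independence of the $\mfS^-_v$. That is the triangularity in Proposition~\ref{prop_rho_iso}, which does not depend on the corollary. Together with Corollary~\ref{cor_rho_injective}, it yields the statement without any straightening.

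\textbf{Smaller points.} The ``mirror relation'' you worry about is unnecessary: \eqref{eq_long} already is the relation for $\partial_k\partial^-_{k-1}$. The reflection $x_k\mapsto -x_{n+1-k}$, $\partial_k\mapsto\partial_{n-k}$ is an automorphism of $\NH_n$, but it does not preserve $\wNC_n$. It sends $\partial^-_k$ to $-x_{n+1-k}\partial_{n-k}x_{n-k}$, which for $n=2$ maps $1$ to $-x_2$. The degree-$2$ part of $\wNC_2$, by contrast, is $\kk\partial^-_1$, with $\partial^-_1(1)=x_1$. The correct symmetry is the anti-involution $\tau_1$ of Remark~\ref{rmk_inv_tauone}.
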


Either of Corollaries~\ref{cor_product_form} and~\ref{cor_rho_injective} implies
that $\wNC_n$ is a finite-dimensional algebra and $\dim \wNC_n\le (n!)^2$.
The multiplication map 
\begin{equation}\NC^-_n\otimes_{\kk}\NC_n\lra \wNC_n
\end{equation} is surjective (this is a map of $(\NC_n^-,\NC_n)$-bimodules).

Let $w_0=(1,n)(2,n-1)\dots$  denote the longest symmetric group element. 
For each $w\in S_n$, define the Schubert polynomial 
\begin{equation}\label{eq_schubert_2}
\mfS_w \ := \ \partial_{w^{-1}w_0}(\underline{x}_n), \ \ \ \ \underline{x}_n=x_1^{n-1}x_2^{n-2}\dots x_{n-1},
\end{equation}
as well as the \emph{negative Schubert polynomial} 
\begin{equation}\label{eq_neg_schubert}
\mfS^-_w \ := \ \partial^-_{w}(1).
\end{equation}
The Schubert polynomials form a $\kk$-basis of $V_n$, over all $w\in S_n$. Observing that
\[\mfS^-_w=\partial^-_w(1) = \tau \partial_w\tau(1) = \tau \partial_w\underline{x}_n=\tau \mfS_{w_0w^{-1}} \]
and by Proposition~\ref{prop_involution_endomorphism}, we see that the negative Schubert polynomials also form a basis of $V_n$. 

In what follows, we use the left weak order on $S_n$, so that
\[
u\le_L w \quad\Longleftrightarrow\quad
w=tu\ \text{for some }t\in S_n\text{ with }
\ell(w)=\ell(t)+\ell(u).
\]

\begin{prop}\label{prop_rho_iso}
    The map $\rho: \wNC_n \lra \End_{\kk}(V_n)$ in \eqref{eq_hom_wNC} is an isomorphism of $\kk$-algebras.
\end{prop}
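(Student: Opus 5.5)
Since $\rho$ is injective by Corollary~\ref{cor_rho_injective} and $\dim_\kk\End_\kk(V_n)=(n!)^2$, it suffices to prove surjectivity. Equivalently, I will show that the $(n!)^2$ operators $\rho(\partial^-_{w_1}\partial_{w_2})$, $w_1,w_2\in S_n$, span $\End_\kk(V_n)$. By Corollary~\ref{cor_product_form} these elements also span $\wNC_n$, so the same count would show they form a basis of $\wNC_n$. The plan is to realize every matrix unit with respect to suitable bases of $V_n$ as the image of an element of $\wNC_n$.

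\textbf{Step 1: a rank one operator.} The first step is to show that $\partial_{w_0}$ acts on $V_n$ as a rank one operator, namely
\[
\partial_{w_0}(v)=c(v)\cdot 1 ,
\]
where $c(v)$ is the coefficient of $\underline{x}_n$ in $v$. Indeed, $\partial_{w_0}$ lowers degree by $n(n-1)$, and $\underline{x}_n$ is the unique basis monomial of $B_n$ of top degree. Every other monomial in $B_n$ has degree $<n(n-1)$, so it is sent to $0$. Finally, $\partial_{w_0}(\underline{x}_n)=1$ by the standard computation, which is also $\mfS_{e}=1$.

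\textbf{Step 2: producing all matrix units.} For $u,w\in S_n$ consider
\[
\partial^-_{u}\,\partial_{w_0}\,\partial_{w}.
\]
This is a composition of operators in $\wNC_n$, so it lies in $\wNC_n$. By Step 1 it acts by
\[
v\mapsto c(\partial_w v)\,\mfS^-_u .
\]
The vectors $\mfS^-_u$ form a basis of $V_n$, as noted just before the proposition. It therefore remains to check that the linear functionals $\lambda_w:v\mapsto c(\partial_w v)$, $w\in S_n$, form a basis of $V_n^*$. Once that is known, the operators $\mfS^-_u\otimes\lambda_w$ span $\End_\kk(V_n)$, and surjectivity follows.

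\textbf{Step 3: duality with Schubert polynomials (the main step).} To handle the functionals $\lambda_w$, I will evaluate them on the Schubert basis $\mfS_v=\partial_{v^{-1}w_0}(\underline{x}_n)$. By degree, $\partial_w\mfS_v$ can have a nonzero $\underline{x}_n$-coefficient only when $\ell(w)=\ell(v)-n(n-1)/2+\ldots$; more precisely, only when $\deg \partial_w\mfS_v=n(n-1)$. That forces
\[
\ell(w)+\ell(v^{-1}w_0)=0 .
\]
This gives $w=e$ and $v=w_0$, so the naive evaluation is too degenerate and the pairing needs to be rearranged.

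Instead I will use the functionals $v\mapsto (\text{coefficient of }1\text{ in }\partial_w v)$. Composing with $\partial_{w_0}$ on the left of $\partial_w$ changes degrees, so I instead take the operators
\[
\partial^-_{u}\,(\partial_{w_0}\partial^-_{w_0})\,\partial_{w}.
\]
Here $\partial_{w_0}\partial^-_{w_0}$ is the projection onto the line $\kk\cdot 1$ along the positive-degree part. Then the relevant functionals become $v\mapsto$ (constant term of $\partial_w v$). On the Schubert basis this gives $\partial_w\mfS_v=\mfS_{vw^{-1}}$ when lengths add, and $0$ otherwise. Its constant term is nonzero exactly when $v=w$, using $\mfS_e=1$ and the fact that the other Schubert polynomials have positive degree. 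So the pairing matrix between the functionals $\{\lambda'_w\}$ and the basis $\{\mfS_v\}$ is the identity, and the functionals form a basis of $V_n^*$.

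The main obstacle I expect is verifying that $\partial_{w_0}\partial^-_{w_0}$, or an appropriate variant such as $\partial^-_{w_0}\partial_{w_0}$, really acts as the projection onto constants on $V_n$. I would check this by the same degree argument as in Step 1 together with the identity $\partial^-_{w_0}(1)=\mfS^-_{w_0}=\tau(1)=\underline{x}_n$, which shows
\[
\partial_{w_0}\partial^-_{w_0}(1)=\partial_{w_0}(\underline{x}_n)=1 .
\]
It remains to see that this operator kills the other basis vectors of $V_n$. I would verify this via degree considerations, using that $\partial^-_{w_0}$ raises degree by $n(n-1)$ while $V_n$ lives in degrees at most $n(n-1)$.
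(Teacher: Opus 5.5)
Your proof is correct once Step 2 is dropped. As you observed, $\lambda_w(v)=c(\partial_w v)$ vanishes for $w\neq e$ by degree, so the actual argument is Step 3.

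The point you flagged as the main obstacle does hold. For $v\in V_n$ of positive degree, $\partial^-_{w_0}v$ lies in $V_n$ (Corollary~\ref{cor_subspace_stable}) in degree above $n(n-1)$, hence is $0$. Meanwhile $\partial^-_{w_0}(1)=\underline{x}_n$ and $\partial_{w_0}(\underline{x}_n)=1$. You also use that $\partial_w\mfS_v=\mfS_{vw^{-1}}$ when $\ell(vw^{-1})=\ell(v)-\ell(w)$, and $0$ otherwise; state the length condition this precisely rather than as ``when lengths add''. Together these show that $\rho(\partial^-_u\partial_{w_0}\partial^-_{w_0}\partial_w)$ is exactly the matrix unit $\mfS_v\mapsto\delta_{v,w}\mfS^-_u$.

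This differs from the paper's route. The paper works only with two-fold products $\partial^-_v\partial_u$. For $u=w_0$ such a product is already the matrix unit $\mfS_w\mapsto\delta_{w,w_0}\mfS^-_v$. For general $u$ it also sends each $\mfS_w$ with $w>_L u$ to $\partial^-_v(\mfS_{wu^{-1}})$, and these extra terms are removed by downward induction on $\ell(u)$ along the left weak order.

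Your insertion of the rank-one idempotent $\partial_{w_0}\partial^-_{w_0}$ (the projection onto $\kk\cdot 1$) removes that induction. Indeed, $\partial^-_{w_0}\partial_w$ is itself the matrix unit $\mfS_w\mapsto\underline{x}_n=\mfS_{w_0}$. Composing it with the paper's base-case element $\partial^-_u\partial_{w_0}$ gives every matrix unit in closed form.

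The paper's version has the advantage of staying inside the span of the normal-form products $\partial^-_v\partial_u$ of Corollary~\ref{cor_product_form}, the same normal form used in the proof of Proposition~\ref{prop_wNC_def_rel}. With your argument, the fact that these $(n!)^2$ products form a basis of $\wNC_n$ follows only afterwards, from Corollary~\ref{cor_product_form} and the dimension count. That is perfectly fine, and it is what your first paragraph anticipates.
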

\begin{proof}
By Corollary~\ref{cor_rho_injective}, we only need to show that $\rho$ is surjective. We use a triangularity argument with respect to two bases: the Schubert polynomials and the negative Schubert polynomials.
    
We show that the rank 1 linear endomorphism of $V_n$ given by $\mfS_w \mapsto \delta_{u,w}\mfS_v^-$ lies in the image of $\rho$ for every $u,v\in S_n$.
Induct on $\ell(u)$ downwards, the base case being $u=w_0$. In this case, we can take $\partial_v^-\partial_{w_0}$, since 
\[\partial_v^-\partial_{w_0}(\mfS_w) = \delta_{w_0,w}\partial_v^-(1) = \delta_{w_0,w}\mfS_v^- .\]
For the inductive step, consider the action of $\partial_v^-\partial_u$:
\[\partial_v^-\partial_{u}(\mfS_w) = \begin{cases}
    \mfS_v^- &\text{ if }u=w, \\ \partial_v^-(\mfS_t) &\text{ if } w>_L u \text{, so that } w=tu \text { where } l(w)=l(t)+l(u),\\
        0 &\text{ otherwise}.
\end{cases}\]
By induction, for a given $w >_L u$, we may subtract from $\partial^-_v \partial_u$ a linear combination of maps that cancels the term $\partial^-_v (\mfS_t)$. Doing this for each $w>_L u$, we obtain the desired map $\mfS_w \mapsto \delta_{u,w}\mfS_v^-$.
\end{proof}

Proposition~\ref{prop_rho_iso} also follows from~\cite[Theorem 1.9]{EQ23}, and see Proposition~\ref{prop_sub_M} below.  

\begin{prop} \label{prop_wNC_def_rel} The following is a set of defining relations in $\wNC_n$, with generators $\partial_1,\dots, \partial_{n-1}$ and $\partial_1^-,\dots, \partial_{n-1}^-$: 
\begin{eqnarray}
    \partial_i^2 & =& 0, \ \ \partial_i \partial_{i+1}\partial_i = \partial_{i+1}\partial_i\partial_{i+1}, \ \  \partial_i\partial_j = \partial_j \partial_i, \ \ |i-j|>1, \\
    \partial^-_i\partial_i^- & =& 0, \ \ \partial^-_i \partial^-_{i+1}\partial^-_i = \partial^-_{i+1}\partial^-_i\partial^-_{i+1}, \ \  \partial^-_i\partial^-_j = \partial^-_j \partial^-_i, \ \ |i-j|>1, \\
    \partial_i \partial_j^-  & = & \partial_j^- \partial_i \ \ \mathrm{if} \ |i-j|>1, \\
    1 & = & \partial_i \partial_i^- + \partial_i^-\partial_i, \\
    \label{eq_shortt}
    \partial_i \partial_{i+1}^- & = & (\partial_i^- +\partial_{i+1}^-)\partial_{i+1}  + \partial_i^- \partial_i -1
    , \\
    \label{eq_longg}
    \partial_{i+1}\partial_i^-  & = & \partial_i^-\partial_{i+1} + 
    [\partial_i^-,\partial_{i+1}^-]\,  [\partial_i,\partial_{i+1}]+2\partial_{i,i+1,i}^-\partial_{i,i+1,i}
\end{eqnarray}
\end{prop}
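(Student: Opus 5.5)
Let $A_n$ be the abstract algebra with generators $\partial_i,\partial_i^-$ ($1\le i\le n-1$) and the listed relations. By Corollary~\ref{cor_relations_minus}, the relations \eqref{eq_demazure}, and Proposition~\ref{prop_relations}, all of them hold in $\wNC_n$. Hence there is a surjective homomorphism $\pi: A_n\to\wNC_n$. By Proposition~\ref{prop_rho_iso}, $\dim\wNC_n=(n!)^2$. So it suffices to show $\dim A_n\le (n!)^2$, and the plan is a straightening (normal-ordering) argument producing a spanning set $\{\partial^-_{w_1}\partial_{w_2}\}_{w_1,w_2\in S_n}$ of $A_n$.

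First, the nilCoxeter relations among the $\partial_i$ alone imply that any word in the $\partial_i$ equals either $0$ or some $\partial_w$. The same holds for the $\partial_i^-$. So it is enough to show that every word in the generators can be rewritten, using the relations of $A_n$, as a combination of words in which all $\partial^-$'s precede all $\partial$'s. For this we need rules expressing $\partial_i\partial_j^-$ as a combination of terms of the form (word in $\partial^-$)(word in $\partial$).

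We have such rules for $|i-j|>1$ (commutation), for $j=i$ (from $1=\partial_i\partial_i^-+\partial_i^-\partial_i$), for $j=i+1$ (\eqref{eq_shortt}), and for $j=i-1$ (\eqref{eq_longg} with index shifted). Each right-hand side is already normally ordered. This matches the paper's remark after Proposition~\ref{prop_relations}, but there the claim was made inside $\NH_n$, whereas we must check it inside $A_n$ using only the listed relations.

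The main obstacle is termination: the rewriting can increase word length, as in \eqref{eq_longg} which produces terms of length $6$. I would control this with the following induction. Consider a word $a\,\partial_i\partial_j^-\,b$, where $a$ is normally ordered and $b$ is arbitrary. Applying a rule replaces $\partial_i\partial_j^-$ by terms $\partial^-_{u}\partial_{v}$. Then $a\partial^-_u$ has the shape (word in $\partial^-$)(word in $\partial$)$\,\partial^-$-word, so the $\partial$-block of $a$ must be pushed past the new $\partial^-$ letters. To make this terminate, I would induct on the total number of $\partial^-$ letters lying to the right of some $\partial$, weighted lexicographically by position. A cleaner alternative is to prove by induction on $\ell(w)$ that $\partial_w\,\partial^-_j$ lies in the span $N$ of normally ordered words; equivalently, that $N$ is a left ideal stable under left multiplication by each $\partial_i$. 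Indeed $\partial_i\partial^-_{u}\partial_v$ can be handled by induction on $\ell(u)$. Write $\partial^-_u=\partial^-_j\partial^-_{u'}$ and apply the rule to $\partial_i\partial_j^-$. This gives terms $\partial^-_{x}\partial_{y}\partial^-_{u'}\partial_v$, in which $\ell(u')<\ell(u)$. Then apply induction to $\partial_{y}$ acting on $\partial^-_{u'}\partial_v$, one letter of $y$ at a time; each step keeps the $\partial^-$ length strictly below $\ell(u)$. Left-multiplying the result by $\partial^-_x$ stays in $N$, since words in $\partial^-$ alone reduce by the nilCoxeter relations. Thus $N$ is stable under all generators and contains $1$, so $N=A_n$.

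This gives $\dim A_n\le (n!)^2$. Combined with surjectivity onto $\wNC_n$, which has dimension $(n!)^2$, it follows that $\pi$ is an isomorphism, which proves the proposition.
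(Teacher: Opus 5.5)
\textbf{There is a gap in the termination argument.} Your overall strategy matches the paper's: a surjection from the presented algebra onto $\wNC_n$, the spanning set $\{\partial^-_{w_1}\partial_{w_2}\}$, and the dimension count via Proposition~\ref{prop_rho_iso}. Your list of rewriting rules for $\partial_i\partial_j^-$ is also correct.

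Your ``cleaner alternative'' inducts on $\ell(u)$ alone. It asserts that, as the letters of $y$ are fed one at a time into $\partial^-_{u'}\partial_v$, the $\partial^-$-length stays below $\ell(u)$. That is false. Relation \eqref{eq_longg} turns $\partial_{i+1}\partial_i^-$ into terms including $2\partial^-_{i,i+1,i}\partial_{i,i+1,i}$, so one step can raise the $\partial^-$-length by two.

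For example, take $n=4$ and $\partial_3\partial_2^-\partial_1^-$, so $\ell(u)=2$ and $u'=s_1$.
\begin{itemize}
\item The rule for $\partial_3\partial_2^-$ contains the term $2\partial^-_{2,3,2}\partial_{2,3,2}\partial_1^-$.
\item Absorbing the first letter gives $\partial_2\partial_1^-$, whose expansion contains $\partial^-_1\partial^-_2\partial_1\partial_2$ and $\partial^-_{1,2,1}\partial_{1,2,1}$.
\item The next letter then requires $\partial_3\partial^-_1\partial^-_2\partial_1\partial_2$ and $\partial_3\partial^-_{1,2,1}\partial_{1,2,1}$. These have $\partial^-$-lengths $2$ and $3$, so your inductive hypothesis does not cover them.
\item Worse, commuting $\partial_3$ past the first $\partial_1^-$ in the latter gives $\partial_1^-(\partial_3\partial_2^-\partial_1^-)\partial_{1,2,1}$. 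This contains the very expression you started with, so an induction on $\ell(u)$ alone is circular.
\end{itemize}
Your first suggestion, a position-weighted inversion count, is not shown to decrease either, since a single rewrite replaces two letters by six.

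\textbf{What is missing} is a second, bounded quantity that only moves in one direction. In the loop above, the $\partial$-block on the right has grown from $1$ to $\partial_{1,2,1}$, and that growth is what must be tracked.

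The paper works with right multiplication by $\partial_i^-$. It proves
$\partial^-_w\partial_u\partial_i^-\in\operatorname{span}\{\partial^-_x\partial_y : w\le_R x,\ \ell(x)-\ell(y)=\ell(w)-\ell(u)+1\}$
by lexicographic induction on $(\binom{n}{2}-\ell(w),\ell(u))$. It uses two facts: the $\partial^-$-prefix can only grow in weak order, since non-reduced products vanish, and the relations are homogeneous.

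Your left-multiplication version works after the mirror-image strengthening. Show that
$\partial_i\partial^-_u\partial_v\in\operatorname{span}\{\partial^-_x\partial_y : v\le_L y,\ \ell(x)-\ell(y)=\ell(u)-\ell(v)-1\}$
by lexicographic induction on $(\binom{n}{2}-\ell(v),\ell(u))$. After absorbing $k$ letters of $y$, each term has one of two forms:
\begin{itemize}
\item its $\partial$-block is strictly longer, so the first coordinate drops; or
\item it still ends in $\partial_v$, in which case homogeneity forces its $\partial^-$-length to be $\ell(u')-k<\ell(u)$.
\end{itemize}
With this measure your argument closes, and it becomes the paper's proof reflected left-to-right.
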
 
Note that relation \eqref{eq_shortt} can be rewritten as 
\[\partial_i (\partial_{i+1}^- +   \partial_i^-) = (\partial_i^- +\partial_{i+1}^-)\partial_{i+1}\]
\begin{proof}
    Denote by $\wNC_n'$ the algebra with these generators and defining relations. These relations are obtained as the union of the relations in \eqref{eq_demazure}, analogous relations on $\partial^-_i$'s, and the relations in Proposition~\ref{prop_relations}. All of these relations hold in $\wNC_n$, and there is a surjective homomorphism $\wNC_n'\lra \wNC_n$.

    Let us show that the relations allow us to write any element of $\wh\NC_n'$ in the span of $\pd_w^-\pd_u$. Informally, the argument below works since we're acting on a finite-dimensional space and the algebra is nilpotent in both positive and negative directions. To do so, it suffices show that
    \[
    \pd_w^-\pd_u\cdot\pd_i^-\in \Span_\bk\{\pd_x^-\pd_y:w\le_\te{R}x,\ \ell(x)-\ell(y)=\ell(w)-\ell(u)+1\},
    \] 
    where $w\le_\te{R}x$ means $wv=x$ and $\ell(w)+\ell(v)=\ell(x)$ (note that then the induction hypothesis says $\ell(v)+\ell(u)-1=\ell(y)$). 
    We prove this by induction on the lexicographic ordering of $\pr*{\binom{n}{2}-\ell(w),\ell(u)}$. 
    For the base case, if $u=\id$ and $w$ is anything, the conclusion is clear. 
    For the induction step, let $u=u's_j$ ($\ell(u)>\ell(u')$), so that $\pd_w^-\pd_u\pd_i^-=\pd_w^-\pd_{u'}\pd_j\pd_i^-$. By the relations, we know
    \[
    \pd_j\pd_i^-=\sum_{\substack{x',y'\\ \ell(x')=\ell(y')\in[0,3]}} c_{x',y'} \pd_{x'}^-\pd_{y'}.
    \] 
    Then write
    \[
    \pd_w^-\pd_u\cdot\pd_i^-=\sum_{\substack{x',y'\\ \ell(x')=\ell(y')\in[0,3]}} c_{x',y'} \pd_w^-\pd_{u'} \pd_{x'}^-\pd_{y'}.
    \] 
    For each term, let $x'=s_{i_1}\cdots s_{i_k}$, and let us try to reorder $\pd_w^-\pd_{u'}\pd_{i_1}^-\cdots\pd_{i_k}^-$. 
    The first step of reordering $\pd_w^-\pd_{u'}\pd_{i_1}^-$ is possible by hypothesis and lands in the span of $\pd_x^-\pd_y$; at each next step either $x$ has strictly increased, and therefore the first induction coordinate has decreased, or $x$ has not changed and $y$ has decreased in length. 
    Hence we may reorder $\pd_w^-\pd_{u'} \pd_{x'}^-$ by induction into the span of $\pd_x^-\pd_y$ such that $w\le_\te{R} x,\ \ell(x)-\ell(y)=\ell(w)-\ell(u')+\ell(x')$. Now the sum looks like 
    \[
    \pd_w^-\pd_u\cdot\pd_i^-=\sum_{\substack{x',y'\\ \ell(x')=\ell(y')\in[0,3]}} c_{x',y'} \sum_{\substack{x,y}}c_{x,y}\pd_x^-\pd_{y}\pd_{y'}.
    \]
    Note that either $\pd_y\pd_{y'}=0$ or $\ell(yy')=\ell(y)+\ell(y')$ and $w\le_\te{R} x$ and $\ell(x)-\ell(yy')=\ell(w)-\ell(u')+\ell(x')-\ell(y')=\ell(w)-\ell(u)+1$, which completes the induction step. 

    Thus, the relations imply that elements $\partial_{w_1}^-\partial_{w_2}$ over all $w_1,w_2\in S_n$ span $\wNC_n'$, so that $\dim(\wNC_n')\le (n!)^2$.   Since $\dim(\wNC_n)=(n!)^2$, the Proposition follows.
\end{proof}

\begin{remark} Inclusion homomorphism $\wNC_n\lra \NH_n$ is given diagrammatically by taking the crossing $\partial_i$ in $\wNC_n$ to the corresponding divided difference operator $\partial_i$ in $\NH_n$ and circled crossing $\partial_i^-$ to the diagram on the RHS of \eqref{eq_circle_crossing}.   
\end{remark}

\begin{example} \label{ex_problem} Relations
  \begin{eqnarray}
  \partial^-_i \partial_{i+1}\partial_i & = & \partial_{i+1}\partial_i \partial^-_{i+1}, \ \ \
  \partial_i \partial^-_{i+1}\partial_i^- \ = \ \partial^-_{i+1}\partial^-_i \partial_{i+1},
  \\ 
   {} [\partial_i,\partial_{i+1}]\partial_i^- & = & \partial_{i+1}^-[\partial_i,\partial_{i+1}], 
  \  \ \ 
  [\partial^-_i,\partial^-_{i+1}]\partial_i \ =\  \partial_{i+1}[\partial_i^-,\partial_{i+1}^-]
  \end{eqnarray} 
  hold in $\wNC_n$. The two equalities in each row follow from each other by applying the involution $\tau$ of $\wNC_n$. Either one can be checked directly in $\NH_n$, which contains $\wNC_n$ as a subalgebra. 
\end{example}

Consider a particular matrix subalgebra $\mathsf{M}_n$ of $\NH_n$ of size $n!$ described in \cite[Section 2.5]{KLMS12}. 
 Denote by $\mathrm{Seq}(n)$ the set of sequences $\{\alpha=(\alpha_1,\dots, \alpha_{n-1})|~0\leq \alpha_t \leq
t,~t=1,\cdots, n-1\}$  For any
$\alpha\in \mathrm{Seq}(n)$, let $|\alpha|=\sum_{t=1}^{n-1}\alpha_t$ and
$\hat{\alpha}=(\hat{\alpha}_1,\hat{\alpha}_2, \dots, \hat{\alpha}_{n-1}
):=(1-\alpha_1,2-\alpha_2,\dots,n-1-\alpha_{n-1})$. For
 $\alpha \in \mathrm{Seq}(n)$ define
$$e_{\alpha}:=e_{\alpha_1}^{(1)}e_{\alpha_2}^{(2)}\cdots
e_{\alpha_{n-1}}^{(n-1)},\ \ \ \
x^{\hat{\alpha}}:=x_2^{\hat{\alpha}_1}x_3^{\hat{\alpha}_2}\cdots
x_{n}^{\hat{\alpha}_{n-1}},$$
where
\begin{equation}\label{eq_elem}
e_m^{(k)} = \sum_{1\le i_1 < \dots < i_m \le k} x_{i_1}\cdots x_{i_m}
\end{equation}
is the $m$-th elementary symmetric function in variables $x_1,\dots, x_k$. 

Denote by $\mathsf{M}_n$ the $\kk$-span of elements 
\[\{E_{\alpha,\beta}=(-1)^{|\hat{\beta}|}
e_{\alpha}\,\partial_{w_0}\,x^{\hat{\beta}}
~|~\alpha,~\beta \in \mathrm{Seq}(n)\},\]
where 
\begin{equation}\label{eq_wnull}
 \partial_{w_0} = (\partial_1\dots \partial_{n-1})(\partial_1\dots \partial_{n-2})\dots (\partial_1) 
\end{equation}
is the iterated divided difference operator for the longest symmetric group element $w_0$.

\begin{prop} \cite[Section 2.5]{KLMS12} \label{prop_basis_NH} 
Elements $E_{\alpha,\beta}$ satisfy the relations 
\[
E_{\alpha,\beta} E_{\beta',\gamma}=\delta_{\beta,\beta'}E_{\alpha,\gamma}
\]
and constitute a homogeneous basis of a matrix subalgebra of $\NH_n$ of size $n!$, so that 
\[
\Mat_{n!}(\kk)\cong \mathsf{M}_n  \subset \NH_n.
\]
The inclusion $\mathsf{M}_n\subset \NH_n$ extends to an isomorphism 
\[
\mathsf{M}_n\otimes\Sym_n\cong \NH_n.
\]
\end{prop}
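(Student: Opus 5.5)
The plan is to verify the matrix-unit relations by a direct computation with $\partial_{w_0}$, and then to read off the isomorphism $\mathsf{M}_n\otimes\Sym_n\cong\NH_n$ from the faithful action on $R_n$. The key input is the product $x^{\hat\beta}e_{\beta'}$ sandwiched between two copies of $\partial_{w_0}$. Since $\partial_{w_0}\,f\,\partial_{w_0}=\partial_{w_0}(f)\,\partial_{w_0}$ for any polynomial $f$ (the image of $\partial_{w_0}$ is symmetric, so it commutes with $\partial_{w_0}$), we get
\[
E_{\alpha,\beta}E_{\beta',\gamma}=(-1)^{|\hat\beta|+|\hat\gamma|}\,e_\alpha\,\partial_{w_0}\!\big(x^{\hat\beta}e_{\beta'}\big)\,\partial_{w_0}\,x^{\hat\gamma}.
\]
The claim thus reduces to the scalar identity
\[
\partial_{w_0}\big(x^{\hat\beta}e_{\beta'}\big)=(-1)^{|\hat\beta|}\delta_{\beta,\beta'} .
\]
This is a degree-$0$ statement: both sides have degree $2(|\hat\beta|+|\beta'|)-n(n-1)$, so the left side vanishes for degree reasons unless $|\hat\beta|+|\beta'|=\binom n2$. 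It therefore amounts to the known duality between the bases $\{e_\beta\}$ and $\{x^{\hat\beta}\}$ (the latter up to sign) of $R_n$ over $\Sym_n$, with respect to the $\Sym_n$-bilinear form $(f,g)\mapsto\partial_{w_0}(fg)$.

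To prove the duality, I would induct on $n$, factoring $\partial_{w_0}^{(n)}=(\partial_1\cdots\partial_{n-1})\,\partial_{w_0}^{(n-1)}$ in the form that acts on the variables $x_1,\dots,x_{n-1}$ first. Write the product as $x_n^{\hat\beta_{n-1}}e^{(n-1)}_{\beta'_{n-1}}\cdot(\text{rest})$, where the rest involves only $x_1,\dots,x_{n-1}$ together with $x^{\hat\beta}$-factors in $x_2,\dots,x_{n-1}$. Both $x_n^{\hat\beta_{n-1}}$ and $e^{(n-1)}_{\beta'_{n-1}}$ are symmetric in $x_1,\dots,x_{n-1}$, so they pull out of $\partial_{w_0}^{(n-1)}$, and the inductive hypothesis on the rest gives $(-1)^{\cdots}\delta$ for the first $n-2$ indices. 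What remains is the one-step identity
\[
\partial_1\cdots\partial_{n-1}\big(x_n^{\,n-1-a}\,e^{(n-1)}_{b}\big)=(-1)^{n-1-a}\delta_{a,b}
\]
in the range $0\le a,b\le n-1$. I would derive it from the generating function $\prod_{i<n}(1+tx_i)=\sum_b e^{(n-1)}_b t^b$ together with $x_n\mapsto$ its Lagrange-interpolation description of $\partial_1\cdots\partial_{n-1}$, namely $\sum_j f(x_j)/\prod_{k\ne j}(x_j-x_k)$ for $f$ depending on $x_n$ and symmetric in the others. Equivalently, one can use $\sum_b(-1)^b e^{(n-1)}_b x_n^{n-1-b}=\prod_{i<n}(x_n-x_i)$ and the observation that $\partial_1\cdots\partial_{n-1}$ extracts the leading coefficient.

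I expect the bookkeeping of the off-range cases ($a\ne b$ with the total degree equal to $\binom n2$ split unevenly across steps) to be the main obstacle. In particular, the cross terms from the earlier stages must vanish rather than merely being handled by induction; I would address this by proving the triangularity statement ``$\partial_{w_0}(x^{\hat\beta}e_{\beta'})=0$ unless $\beta'\le\beta$ in a suitable order, with the diagonal terms equal to $\pm1$''. This makes the Gram matrix unitriangular. From unitriangularity I would then pass to an actual delta by a change of basis argument, or check directly that the strict triangular part vanishes using degree considerations at each inductive step.

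Given the relations, the $E_{\alpha,\beta}$ are nonzero and form a system of $(n!)^2$ matrix units, so they are linearly independent and span $\mathsf{M}_n\cong\Mat_{n!}(\kk)$. For the last claim, the multiplication map $\mathsf{M}_n\otimes\Sym_n\to\NH_n$ is an algebra map because $\Sym_n$ is central in $\NH_n$. I would compare it with the isomorphism $\gamma:\NH_n\cong\End_{\Sym_n}(R_n)$ from Section~\ref{subsec_subalgebra}. Using the duality, $E_{\alpha,\beta}$ sends $e_{\beta'}$ to $\delta_{\beta\beta'}e_\alpha$, since $E_{\alpha,\beta}(e_{\beta'})=(-1)^{|\hat\beta|}e_\alpha\,\partial_{w_0}(x^{\hat\beta}e_{\beta'})$. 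So, in the $\Sym_n$-basis $\{e_\beta\}$ of $R_n$ (a basis by a triangular comparison with $B_n$), the elements $E_{\alpha,\beta}$ are exactly the elementary matrices. Hence the map is an isomorphism onto $\Mat_{n!}(\Sym_n)\cong\NH_n$.
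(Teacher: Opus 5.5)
The paper gives no proof of its own here, deferring to \cite{KLMS12,KQ15}. Your argument is the standard route behind those references and is correct in substance: you reduce via $\partial_{w_0}f\partial_{w_0}=\partial_{w_0}(f)\partial_{w_0}$ to the duality $\partial_{w_0}(x^{\hat\beta}e_{\beta'})=(-1)^{|\hat\beta|}\delta_{\beta,\beta'}$, prove it by induction on $n$ through $\partial_{w_0}=(\partial_1\cdots\partial_{n-1})\,\partial^{(n-1)}_{w_0}$, and read the $E_{\alpha,\beta}$ as elementary matrices in the $\Sym_n$-basis $\{e_\beta\}$.

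The one real slip is your claim that off-diagonal terms vanish for degree reasons. When $|\beta'|>|\beta|$ the output has positive degree, so the real content is the case $b>a$ of your one-step identity. That case follows from $e^{(n-1)}_b=\sum_k(-1)^kx_n^ke^{(n)}_{b-k}$, from $\partial_1\cdots\partial_{n-1}(x_n^m)=(-1)^{n-1}h_{m-n+1}(x_1,\dots,x_n)$, and from $\sum_j(-1)^je_{m-j}h_j=\delta_{m,0}$, which together give $(-1)^{n-1-a}\delta_{a,b}$ for all $0\le a,b\le n-1$. Note that your interpolation denominator should be $\prod_{k\neq j}(x_k-x_j)$; with your sign you would get $(-1)^a$ instead of $(-1)^{n-1-a}$.

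With the one-step identity in the full range, the induction produces the exact $\delta$. The triangularity/change-of-basis fallback is then unnecessary, and it would in any case replace the given $E_{\alpha,\beta}$ by other elements. Finally, the $\Sym_n$-basis property of $\{e_\beta\}$ follows from the duality itself, since these $n!$ elements lie in $V_n$; it does not come from triangularity against $B_n$, which fails already for $n=3$.
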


The subalgebra also appears in \cite[Proposition~3.3]{KQ15}. See~\cite{KLMS12,KQ15} for a diagrammatic description of this subalgebra and a proof of the above result. 

\begin{remark}
    Other subalgebras of $\NH_n$ isomorphic to $\Mat_{n!}(\kk)$ can be obtained by conjugating $\mathsf{M}_n$ by a matrix $A$ in $\mathsf{GL}(n!,\Sym_n)$. To get a graded subalgebra, entries of $A$ must be homogeneous of suitable degrees. 
\end{remark}

\begin{remark}
    $\wNC_n$ is a maximal finite-dimensional subalgebra of $\NH_n$. It is straightforward to check that any strictly larger subalgebra $A$ with $\wNC_n\subset A \subset \NH_n$ is infinite-dimensional.
\end{remark}

For the discussion below of $\slt$ actions, assume that $\kk$ is a field of characteristic $0$. 
B.~Elias and Y.~Qi~\cite[Section 1.3]{EQ23} define an action of the Lie algebra $\slt$ on the nilHecke algebra $\NH_n$,  with the basis elements $\mathsf{e},\mathsf{f},\mathsf{h}\in \slt $ acting (by derivations) as follows: 
\begin{eqnarray}\label{eq_action_1}
     & & \mathsf{e}(x_i)=-x_i^2, \ \mathsf{e}(\partial_i)=x_i\partial_i+\partial_i x_{i+1}, \  \\
     \label{eq_action_2}
     & & \mathsf{f}(x_i) = 1, \ \mathsf{f}(\partial_i)=0,  \\
     \label{eq_action_3}
     & & \mathsf{h}(u) = \deg(u)u, \ \ u\in \NH_n, \ \deg(x_i)=2, \deg(\partial_i)=-2, \\
     & & \mathsf{t}(uv)=\mathsf{t}(u)v + u \mathsf{t}(v), \ \mathsf{t}\in\slt,\ u,v\in \NH_n,
\end{eqnarray}
Viewing $\NH_n$ as a representation of $\slt$, take its maximal finite-dimensional subrepresentation and denote it $\NH_n^{\mathsf{fd}}$. Since $\mathsf{e},\mathsf{f}$ act by derivations,  subrepresentation $\NH_n^{\mathsf{fd}}$ is a subalgebra. 

\begin{prop} \cite[Theorem~1.9]{EQ23} \label{prop_sub_M} Subalgebras $\NH_n^{\mathsf{fd}}$ and $\mathsf{M}_n$ of $\NH_n$ coincide:
\[
\NH_n^{\mathsf{fd}} \cong \mathsf{M}_n.
\]
\end{prop}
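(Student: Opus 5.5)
The plan is to prove the two inclusions $\mathsf{M}_n\subseteq \NH_n^{\mathsf{fd}}$ and $\NH_n^{\mathsf{fd}}\subseteq \mathsf{M}_n$ separately, using Proposition~\ref{prop_basis_NH} ($\NH_n\cong \mathsf{M}_n\otimes \Sym_n$) together with the $\slt$-action \eqref{eq_action_1}--\eqref{eq_action_3} by derivations.

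\emph{Step 1: $\Sym_n$ contributes no finite-dimensional part.} The derivation $\mathsf{f}$ acts on $R_n=\kk[x_1,\dots,x_n]$ as $\sum_i \partial/\partial x_i$. On $\Sym_n$ it is locally nilpotent and lowers degree by $2$, while $\mathsf{e}=-\sum_i x_i^2\partial/\partial x_i$ raises degree by $2$. A nonzero element of positive $\mathsf{h}$-weight $2d$ with $d>0$ satisfies $\mathsf{e}^k(u)\neq 0$ for all $k$. To see this, note that $\mathsf{e}$ is injective on homogeneous polynomials of positive degree: $\mathsf{e}(u)=0$ means $u$ is invariant under the flow $x_i\mapsto x_i/(1+tx_i)$, which forces $u$ to be constant. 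Hence any finite-dimensional $\slt$-submodule of $\Sym_n$ is contained in $\kk$. In characteristic $0$, finite-dimensional $\slt$-modules have weights symmetric about $0$, so this confirms the claim.

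\emph{Step 2: $\mathsf{M}_n$ is $\slt$-stable and finite-dimensional.} Since $\dim \mathsf{M}_n=(n!)^2$, it suffices to show $\mathsf{e}(\mathsf{M}_n)\subseteq \mathsf{M}_n$ and $\mathsf{f}(\mathsf{M}_n)\subseteq\mathsf{M}_n$; then $\mathsf{M}_n\subseteq \NH_n^{\mathsf{fd}}$.

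For $\mathsf{f}$, compute on a generator $E_{\alpha,\beta}=\pm e_\alpha\,\partial_{w_0}\,x^{\hat\beta}$. We have $\mathsf{f}(\partial_{w_0})=0$. Also $\mathsf{f}(e^{(k)}_m)=(k-m+1)e^{(k)}_{m-1}$, and $\mathsf{f}(x^{\hat\beta})$ lowers one exponent. The resulting terms stay in the index range $\mathrm{Seq}(n)$, so they lie in $\mathsf{M}_n$.

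For $\mathsf{e}$, derive the formula $\mathsf{e}(\partial_{w_0})=\sum_i c_i x_i\partial_{w_0}+\partial_{w_0}(\dots)$ from $\mathsf{e}(\partial_i)=x_i\partial_i+\partial_i x_{i+1}$, and combine it with $\mathsf{e}(x_i)=-x_i^2$. The goal is to rewrite $\mathsf{e}(E_{\alpha,\beta})$ as $e_{\alpha'}\partial_{w_0}x^{\hat\beta'}$ combinations. The key tool is the characterization of $\mathsf{M}_n$ as $\End_{\kk}$ of a specific complement $V'\subset R_n$ with $R_n=V'\otimes \Sym_n$: elements of $\mathsf{M}_n$ map $V'$ into $V'$. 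It is cleaner to check that $\mathsf{e}$ preserves this characterization, i.e.\ that $\mathsf{e}(a)$ preserves $V'$ whenever $a$ does, using $\mathsf{e}(a)=[\mathsf{e}_R,a]$ where $\mathsf{e}_R=-\sum x_i^2\partial/\partial x_i$ acts on $R_n$. This last claim holds if $V'$ is $\mathsf{e}_R$- and $\mathsf{f}_R$-stable. So the real task is to identify $V'=\mathsf{M}_n\cdot 1$-type span, namely the span of $e_\alpha$'s (up to the twist), and verify it is $\slt$-stable in $R_n$, e.g.\ as the span of products of $e^{(k)}_{m}$, whose $\slt$-behaviour is explicit. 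I expect this verification to be the main obstacle.

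\emph{Step 3: reverse inclusion.} By Step 2, $\NH_n\cong \mathsf{M}_n\otimes\Sym_n$ is an isomorphism of $\slt$-modules up to the commutator correction. Indeed, since $\mathsf{M}_n$ is a stable simple matrix algebra, $\NH_n=\mathsf{M}_n\otimes Z$ with $Z=C_{\NH_n}(\mathsf{M}_n)$ an $\slt$-stable subalgebra isomorphic to $\Sym_n$. The identification is via the centre, and the centre $\Sym_n$ is $\slt$-stable with the action from Step 1. Then
\[
\NH_n^{\mathsf{fd}}=\mathsf{M}_n\otimes \Sym_n^{\mathsf{fd}}=\mathsf{M}_n\otimes\kk=\mathsf{M}_n,
\]
where the first equality uses that tensoring with the finite-dimensional module $\mathsf{M}_n$ commutes with taking the locally finite part. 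This can be checked weight-wise, via $\Hom_{\slt}(L,\mathsf{M}_n\otimes Z)=\Hom_{\slt}(L\otimes \mathsf{M}_n^*,Z)$ for finite-dimensional $L$.
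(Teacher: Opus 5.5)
\paragraph{Verdict: Steps 1 and 3 work; the $\mathsf{e}$-half of Step 2 does not.}
Your Steps 1 and 3 are sound, and so is the $\mathsf{f}$-half of Step 2. On the multiplication operators $R_n\subset\NH_n$ the derivation $\mathsf{e}$ really is $-\sum_i x_i^2\,\partial/\partial x_i$, and the locally finite part of $\Sym_n$ is $\kk$. The centralizer of an $\slt$-stable $\mathsf{M}_n$ is $\slt$-stable, the multiplication map $\mathsf{M}_n\otimes\Sym_n\to\NH_n$ is equivariant, and locally finite parts commute with tensoring by a finite-dimensional module. Since the paper only quotes \cite{EQ23} here, this would give a self-contained alternative.

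The gap is the $\mathsf{e}$-half of Step 2, on which everything else rests, and the mechanism you propose for it is false. The derivation $\mathsf{e}$ of $\NH_n$ is not $[\mathsf{e}_R,-]$ for $\mathsf{e}_R=-\sum_i x_i^2\,\partial/\partial x_i$:
\begin{itemize}
\item One computes $[\mathsf{e}_R,\partial_i]=(x_i+x_{i+1})\partial_i$, whereas $\mathsf{e}(\partial_i)=x_i\partial_i+\partial_i x_{i+1}=2x_i\partial_i-1$.
\item Their difference is the transposition operator $s_i=1-(x_i-x_{i+1})\partial_i\neq 0$. For $n=2$ the two operators send $x_1$ to $x_1+x_2$ and to $x_1$, respectively.
\end{itemize}
Consistently with this, the subspace you plan to show is $\mathsf{e}_R$-stable is not $\mathsf{e}_R$-stable. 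The span $V'=\mathrm{span}\{e_\alpha\}$ is exactly $V_n$: each $x_j$ occurs in $e_\alpha$ with exponent at most $n-j$, and the dimensions agree. Already $\mathsf{e}_R(x_1)=-x_1^2\notin V_2=\mathrm{span}\{1,x_1\}$. So the verification you flag as the main obstacle is false as formulated. As a result, $\slt$-stability of $\mathsf{M}_n$ remains unproved, and with it both the inclusion $\mathsf{M}_n\subseteq\NH_n^{\mathsf{fd}}$ and the equivariant decomposition used in Step 3.

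\paragraph{The missing idea: twist by $\underline{x}_n$.}
Set $\tilde{\mathsf{e}}_R:=\underline{x}_n\circ\mathsf{e}_R\circ\underline{x}_n^{-1}=\mathsf{e}_R+\sum_{j=1}^{n}(n-j)x_j$. Because $\partial_i\bigl(\sum_j (n-j)x_j\bigr)=1$, one gets $[\tilde{\mathsf{e}}_R,x_i]=-x_i^2$ and $[\tilde{\mathsf{e}}_R,\partial_i]=2x_i\partial_i-1=\mathsf{e}(\partial_i)$. Hence $\mathsf{e}=[\tilde{\mathsf{e}}_R,-]$ on $\NH_n$. Also $\mathsf{f}=[\mathsf{f}_R,-]$ with $\mathsf{f}_R=\sum_j\partial/\partial x_j$. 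Now
\[
\tilde{\mathsf{e}}_R\bigl(x_1^{a_1}\cdots x_{n-1}^{a_{n-1}}\bigr)=\sum_{j=1}^{n-1}(n-j-a_j)\,x_j\,x_1^{a_1}\cdots x_{n-1}^{a_{n-1}},
\]
so $\tilde{\mathsf{e}}_R$ and $\mathsf{f}_R$ both preserve $V_n$, which becomes the module $\bigotimes_{j=1}^{n-1}\mathrm{Sym}^{n-j}(\kk^2)$. Your commutator argument then shows that $\{a\in\NH_n : a(V_n)\subseteq V_n\}=\End_{\kk}(V_n)\otimes 1$ is $\slt$-stable.

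This algebra equals $\mathsf{M}_n$. Indeed $E_{\alpha,\beta}(e_\gamma)=\pm\delta_{\beta,\gamma}e_\alpha$, by the duality $\partial_{w_0}(x^{\hat\beta}e_\gamma)=\pm\delta_{\beta,\gamma}$ behind Proposition~\ref{prop_basis_NH}, and the dimensions agree.

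Alternatively, the same algebra is $\wNC_n$ by Corollary~\ref{cor_subspace_stable} and Proposition~\ref{prop_rho_iso}. Stability then follows from the generator formulas
\[
\mathsf{e}(\partial_i)=\partial_i^-\partial_i-\partial_i\partial_i^-,\quad \mathsf{e}(\partial_i^-)=0,\quad \mathsf{f}(\partial_i)=0,\quad \mathsf{f}(\partial_i^-)=\partial_i\partial_i^--\partial_i^-\partial_i.
\]
With either repair, your Steps 1 and 3 complete the proof.
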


\begin{prop} Subalgebras $\wNC_n$ and $\NH_n^{\mathsf{fd}}$ of $\NH_n$ coincide. 
\end{prop}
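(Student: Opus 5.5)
The plan is to show the inclusion $\wNC_n\subset \NH_n^{\mathsf{fd}}$ and then conclude by comparing dimensions. By Proposition~\ref{prop_rho_iso}, $\wNC_n\cong \End_{\kk}(V_n)$ has dimension $(n!)^2$. By Propositions~\ref{prop_sub_M} and~\ref{prop_basis_NH}, $\NH_n^{\mathsf{fd}}=\mathsf{M}_n\cong \Mat_{n!}(\kk)$ also has dimension $(n!)^2$. So an inclusion forces equality.

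\textbf{Reduction to generators.} Since $\mathsf{e},\mathsf{f}$ act by derivations, $\NH_n^{\mathsf{fd}}$ is a subalgebra of $\NH_n$. It therefore suffices to show that each generator $\partial_i$ and $\partial_i^-$ of $\wNC_n$ lies in a finite-dimensional $\slt$-subrepresentation of $\NH_n$.

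\textbf{The three-dimensional submodule.} I would exhibit, for each $i$, the span of
\[
\partial_i,\qquad \mathsf{e}(\partial_i)=x_i\partial_i+\partial_i x_{i+1},\qquad \partial_i^-=-x_i\partial_i x_{i+1},
\]
with $\mathsf{h}$-weights $-2,0,2$, and check that it is closed under $\mathsf{e}$ and $\mathsf{f}$. The computations follow from the Leibniz rule together with \eqref{eq_action_1}--\eqref{eq_action_3}.
\begin{itemize}
\item $\mathsf{f}(\partial_i)=0$.
\item $\mathsf{f}(\partial_i^-)=-\partial_i x_{i+1}-x_i\partial_i=-\mathsf{e}(\partial_i)$.
\item $\mathsf{f}(\mathsf{e}(\partial_i))=2\partial_i$, using $\mathsf{f}(x_j)=1$ and $\mathsf{f}(\partial_i)=0$.
\item $\mathsf{e}(\partial_i^-)=x_i^2\partial_i x_{i+1}-x_i(x_i\partial_i+\partial_i x_{i+1})x_{i+1}+x_i\partial_i x_{i+1}^2=0$.
\item $\mathsf{e}^2(\partial_i)=-x_i^2\partial_i+x_i(x_i\partial_i+\partial_ix_{i+1})+(x_i\partial_i+\partial_ix_{i+1})x_{i+1}-\partial_i x_{i+1}^2=2x_i\partial_i x_{i+1}=-2\partial_i^-$.
\end{itemize}
Hence this span is a copy of the adjoint representation. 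In particular both $\partial_i$ and $\partial_i^-$ lie in $\NH_n^{\mathsf{fd}}$.

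\textbf{Main obstacle.} The only step requiring care is this derivation computation, mainly the sign conventions in $\mathsf{e}(\partial_i)$ and the cancellation giving $\mathsf{e}(\partial_i^-)=0$. Everything else is formal from results already stated.

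\textbf{Conclusion.} Combining the inclusion $\wNC_n\subset\NH_n^{\mathsf{fd}}$ with the equality of dimensions $(n!)^2$ gives $\wNC_n=\NH_n^{\mathsf{fd}}$, and hence also $\wNC_n=\mathsf{M}_n$.
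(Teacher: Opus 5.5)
Your proof is correct and follows the paper's argument. You show that $\partial_i$ and $\partial_i^-$ lie in $\NH_n^{\mathsf{fd}}$, since together with $\mathsf{e}(\partial_i)$ they span a copy of the adjoint representation. You then use that $\NH_n^{\mathsf{fd}}$ is a subalgebra and conclude by comparing the dimensions $(n!)^2$.

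The only difference is that you verify closure of the three-dimensional span directly in $\NH_n$ for every $i$. The paper instead reduces to $n=2$, $i=1$ and cites the explicit description of $\NH_2^{\mathsf{fd}}$ from Elias--Qi.
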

\begin{proof}
Elements $\partial_i$ and $\partial_i^-$ are in $\NH_n^{\mathsf{fd}}$. It is enough to check that for $i=1$ and $n=2$, which is an easy computation. Algebra $\NH_2^{\mathsf{fd}}$ is written down explicitly in~\cite[Example 1.10]{EQ23}. It is a $2\times 2$ matrix  algebra with a basis $\{\partial_1\partial_1^-,\partial_1^-\partial_1,\partial_1,\partial_1^-\}$, thus it contains $\partial_1$ and $\partial_1^-$. 
Note also that $\partial_1,\partial_1^-$ and 
$\mathsf{e}(\partial_1)=2x_1\partial_1-1=\partial_1^-\partial_1-\partial_1\partial_1^-$  span a copy of the adjoint representation of $\slt$ in $\NH_2$ and $\wNC_2$.

    Therefore, algebra $\NH_n^{\mathsf{fd}}$ contains generators of $\wNC_n$ and there is an inclusion $\wNC_n\subset \NH_n^{\mathsf{fd}}$. Since the two algebras have the same dimension, the inclusion is an equality.
\end{proof}

 \begin{cor}\label{cor_three_coincide}
     Subalgebras $\wNC_n$, $\mathsf{M}_n$, and $\NH_n^{\mathsf{fd}}$ of $\NH_n$ coincide:
     \[
     \wNC_n = \mathsf{M}_n =\NH_n^{\mathsf{fd}} .
     \]
 \end{cor}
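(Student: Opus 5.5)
The corollary combines two equalities that are already in hand, so the plan is short. The equality $\wNC_n = \NH_n^{\mathsf{fd}}$ is the proposition immediately preceding the corollary. The equality $\NH_n^{\mathsf{fd}} = \mathsf{M}_n$ is Proposition~\ref{prop_sub_M}, quoted from \cite[Theorem~1.9]{EQ23}. Both are equalities of subsets of $\NH_n$, so chaining them gives $\wNC_n=\mathsf{M}_n=\NH_n^{\mathsf{fd}}$. This holds under the standing assumption that $\kk$ has characteristic $0$, which is needed for the $\slt$-action to be defined.

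Since the deduction is formal, I would also check the equality $\wNC_n=\mathsf{M}_n$ directly, in a way that does not use the $\slt$-action and so works in any characteristic.

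\textbf{Step 1: dimensions.} Proposition~\ref{prop_rho_iso} gives $\dim \wNC_n=(n!)^2$, and Proposition~\ref{prop_basis_NH} gives $\dim\mathsf{M}_n=(n!)^2$. So it suffices to prove one inclusion.

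\textbf{Step 2: the inclusion $\mathsf{M}_n\subset\wNC_n$.} Each generator $E_{\alpha,\beta}=\pm e_\alpha\partial_{w_0}x^{\hat\beta}$, acting on $R_n$, commutes with $\Sym_n$. It kills everything of low degree and sends $R_n$ into the span of the monomials $e_\alpha$. I would show that it preserves $V_n$ and acts on $V_n$ as a rank-one operator $\mfS_w\mapsto\delta\cdot(\text{element})$.

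\textbf{Step 3: matching rank-one operators.} The proof of Proposition~\ref{prop_rho_iso} realizes every rank-one operator of $V_n$ inside $\wNC_n$. An element of $\NH_n\cong\End_\kk(V_n)\otimes\Sym_n$ that preserves $V_n$ and has "constant coefficients" in this decomposition then lies in $\rho^{-1}(\End_\kk V_n)=\wNC_n$.

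The main obstacle in this alternative route is the degree bookkeeping in Step 2. One must verify that $\partial_{w_0}x^{\hat\beta}$ maps $V_n$ into constants and that multiplying by $e_\alpha$ lands back in $V_n$. In other words, one needs that the $e_\alpha$ span $V_n$ modulo nothing; this follows from the unitriangularity of $e_\alpha$ against $B_n$. That check is precisely the content of \cite{KLMS12}. In the written proof I would therefore simply cite the two earlier propositions and chain them.
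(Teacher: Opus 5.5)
Your argument is correct and matches the paper: the corollary is stated there without a separate proof, as the immediate combination of Proposition~\ref{prop_sub_M} ($\NH_n^{\mathsf{fd}}=\mathsf{M}_n$, quoted from \cite{EQ23}) with the preceding proposition $\wNC_n=\NH_n^{\mathsf{fd}}$. In your optional characteristic-free sketch, the only fact actually needed is that each $E_{\alpha,\beta}$ preserves $V_n$ (because $e_\alpha\in V_n$, and for $x^a\in B_n$ the monomial $x^{\hat\beta}x^a$ has all exponents at most $n-1$, so $\partial_{w_0}$ sends it to $0$ or $\pm1$), after which $\Sym_n$-linearity and Proposition~\ref{prop_rho_iso} force $E_{\alpha,\beta}\in\wNC_n$; no spanning property of the $e_\alpha$ is used, and on all of $R_n$ the image of $E_{\alpha,\beta}$ lies in $e_\alpha\Sym_n$, not in $\kk e_\alpha$.
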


\begin{remark}
Define $\ol\NH_n$ as the subalgebra of endomorphisms of $R_n^-$ (or of $R_n^{\circ}$) generated by the operators $\opartial_i$, $1\le i \le n-1$ and operators of multiplication by $x_1^{-1},\dots, x_n^{-1}$. Rewriting \eqref{eq_opart_t} as $\partial_i = - x_i^{-1}x_{i+1}^{-1}\opartial_i$ shows that $\partial_i\in \ol\NH_n$ but $\partial_i^-$ is not in $\ol\NH_n$. 
\end{remark}

Recall that $\underline{x}_n$ is given by \eqref{eq_monomial}. 
Define $\NH_n^-$ as the subalgebra of endomorphisms of $R_n^-\underline{x}_n$ (or of $R_n^{\circ}$, which contains $R_n^-\underline{x}_n$) generated by operators $\partial_i^-$, $1\le i \le n-1$ and  operators of multiplication by $x_1^{-1},\dots, x_n^{-1}$. Operators $\partial_i^-$ and multiplication by $x_j^{-1}$ are  conjugate, via involution $\tau$, to the operators $\partial_i$ and multiplication by $x_j$, correspondingly.

Involution $\tau$ of $\kk$-vector space $R_n^{\circ}$ taking $R_n$ to $R_n^-\underline{x}_n$ induces an isomorphism 
\begin{equation}\label{eq_aut_tau_1}
\tau \ : \ \NH_n \stackrel{\cong}{\lra} \NH_n^-, \ \ \tau(\partial_i)=\partial_i^-, \ \ \tau(x_i)=x_i^{-1}  
\end{equation}
(same notation). 

\vspace{0.1in} 

Let $\NH_n^{\circ}$ be the subalgebra of endomorphisms of $R_n^{\circ}$ generated by $\partial_1,\dots, \partial_{n-1}$ and operators of multiplication by $x_1,x_1^{-1},\dots, x_n,x_n^{-1}$. This subalgebra contains $\NH_n, \NH_n^-$ and $\ol\NH_n$ as subalgebras. Note the natural isomorphism 
\begin{equation}
    \NH_n^{\circ} \cong \Mat_{n!}(\kk)\otimes \Sym_n^{\circ}
\end{equation}
induced by the corresponding isomorphism for $\NH_n$ or, equivalently, by the isomorphism \eqref{eq_iso_circc}.

\begin{remark}
Automorphism $\tau$ in \eqref{eq_aut_tau_1} extends to an involution of the algebra $\NH_n^{\circ}$, also denoted $\tau$ and given by 
\begin{equation}\label{eq_aut_tau_2}
\tau(\partial_i)=\partial_i^-, \ \ \tau(\partial_i^-)=\partial_i,  \ \ \tau(x_i^{\pm 1})=x_i^{\mp 1}  
\end{equation}
Consequently, $\tau$ restricts to an involution on $\wNC_n$, also denoted $\tau$ and given by 
\begin{equation}\label{eq_aut_tau_3}
\tau(\partial_i)=\partial_i^-, \ \ \tau(\partial_i^-)=\partial_i,  \ \ 1\le i \le n-1.  
\end{equation}
This involution transposes the relations in the first two rows of Proposition~\ref{prop_wNC_def_rel}. It preserves the relations in the third and fourth rows. Applying it to the relations in the last two rows nets the following additional relations: 
\begin{eqnarray}
    \label{eq_shorttt}
    \partial_i^- \partial_{i+1} & = & (\partial_i +\partial_{i+1})\partial_{i+1}^-  + \partial_i \partial_i^- -1
    , \\
    \label{eq_longgg}
    \partial_{i+1}^-\partial_i  & = & \partial_i\partial_{i+1}^- + 
    [\partial_i,\partial_{i+1}]\,  [\partial_i^-,\partial_{i+1}^-]+2\partial_{i,i+1,i}\partial_{i,i+1,i}^-.
\end{eqnarray}
\end{remark}

\begin{prop}\label{prop_below}
    Elements $\partial_i,\partial_i^-$, for $1\le i\le n-1$, belong to the intersection $\NH_n\cap \NH_n^-$. 
\end{prop}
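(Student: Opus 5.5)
We need to show $\partial_i,\partial_i^-\in \NH_n\cap\NH_n^-$, where both algebras are viewed inside $\NH_n^{\circ}$ acting on $R_n^{\circ}$.

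Membership in $\NH_n$ is immediate. $\partial_i$ is a generator of $\NH_n$, and $\partial_i^-=-x_i\partial_i x_{i+1}$ by \eqref{eq_d_minus}, a product of generators of $\NH_n$.

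For $\NH_n^-$, the element $\partial_i^-$ is a generator by definition. So it remains to show $\partial_i\in\NH_n^-$. There are two routes, and I would give both since each is short.

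The first route uses the isomorphism $\tau:\NH_n\to\NH_n^-$ of \eqref{eq_aut_tau_1}. Since $\partial_i^-=-x_i\partial_ix_{i+1}$ holds in $\NH_n$, applying $\tau$ gives
\[
\tau(\partial_i^-)=-x_i^{-1}\partial_i^-x_{i+1}^{-1}\in\NH_n^-.
\]
By Corollary~\ref{cor_conjugation} and \eqref{eq_aut_tau_2}, $\tau(\partial_i^-)=\partial_i$ as operators on $R_n^{\circ}$. To be careful, note that $\tau$ on operators is conjugation by the vector-space involution $\tau$ of $R^{\circ}_n$. Hence $\tau(\partial_i^-)=\tau\partial_i^-\tau=\tau\tau\partial_i\tau\tau=\partial_i$, using $\partial_i^-=\tau\partial_i\tau$ from Corollary~\ref{cor_conjugation}. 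It follows that $\partial_i=-x_i^{-1}\partial_i^-x_{i+1}^{-1}$, and the right-hand side lies in $\NH_n^-$.

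The second route is a direct check. Compute $-x_i^{-1}\partial_i^-x_{i+1}^{-1}=x_i^{-1}x_i\partial_ix_{i+1}x_{i+1}^{-1}=\partial_i$ in $\NH_n^{\circ}$, using \eqref{eq_d_minus}. Both sides are operators on $R_n^{\circ}$, so this is a valid identity there.

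The only subtle point is which space these operators act on. $\NH_n^-$ was defined on $R_n^-\underline{x}_n$ or on $R_n^{\circ}$. So I would interpret the intersection inside $\NH_n^{\circ}$, where multiplication by $x_j^{\pm1}$ makes sense, and check that the identity $\partial_i=-x_i^{-1}\partial_i^-x_{i+1}^{-1}$ holds on all of $R_n^{\circ}$. The computation above does exactly that, so there is no real obstacle.
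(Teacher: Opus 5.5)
Your proposal is correct, and your second route is exactly the paper's proof: rewrite \eqref{eq_d_minus} as $\partial_i=-x_i^{-1}\partial_i^-x_{i+1}^{-1}$ inside $\NH_n^{\circ}$, after noting that $\partial_i,\partial_i^-\in\NH_n$ and that $\partial_i^-\in\NH_n^-$ by definition. Your $\tau$-conjugation route is also valid, but it only recovers the same identity less directly.
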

\begin{proof} We already know that these elements belong to $\NH_n$. By definition, $\partial_i^-\in \NH_n^-$. 
Rewriting equation \eqref{eq_d_minus} as $\partial_i=-x_i^{-1}\partial_i^-x_{i+1}^{-1}$ implies that $\partial_i\in \NH_n^-$. 
\end{proof} 

Lie algebra $\slt$ action on $\NH_n$ given by \eqref{eq_action_1}-\eqref{eq_action_3} extends to its localization $\NH_n^{\circ}$ by 
\begin{equation}\label{eq_action_4}
    \mathsf{e}(x_i^{-1}) =  1, \ \ \mathsf{f}(x_i^{-1}) = - x_i^{-2},\ \ \mathsf{h}(x_i^{-1})=-2 x_i^{-1}. 
\end{equation}
We then compute 
\begin{eqnarray*}& & \mathsf{e}(\partial_i^-) = -\mathsf{e}(x_i\partial_ix_{i+1})=
x_i^2\partial_ix_{i+1}-x_i(x_i\partial_i+\partial_i x_{i+1})x_{i+1}+x_i\partial_i x_{i+1}^2=0. \\
& &  \mathsf{f} (\partial_i^-) =
- \mathsf{f}(x_i\partial_ix_{i+1})= \partial_i^-x_{i+1}^{-1} + x_i^{-1}\partial_i^{-},
\end{eqnarray*}
 and see that the $\slt$ action preserves the subalgebra $\NH_n^-\subset \NH_n^{\circ}$ as well:
\begin{align}
  \mathsf{e}(\pd_i) &= x_i\partial_i+\partial_i x_{i+1} = \pd_i^-\pd_i-\pd_i \pd_i^-,  &  
  \mathsf{h}(\pd_i) &=  -2 \pd_i , & 
  \mathsf{f}(\pd_i) &=  0   ,\\
\mathsf{e}(\pd_i^-) &= 0,    &   
\mathsf{h}(\pd_i^-) &= 2 \pd_i^- , & 
\mathsf{f}(\pd_i^-) &= \pd_i \pd_i^- - \pd_i^-\pd_i. 
\end{align}

\begin{prop}\label{prop_sub_two_equal}
    Subalgebras $\wNC_n$ and $\NH_n\cap\NH_n^-$ of $\NH^{\circ}_n$ are equal. 
\end{prop}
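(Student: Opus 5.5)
One inclusion is immediate: by Proposition~\ref{prop_below}, the generators $\partial_i,\partial_i^-$ of $\wNC_n$ lie in $\NH_n\cap\NH_n^-$. Since the intersection is a subalgebra of $\NH_n^{\circ}$, we get $\wNC_n\subset \NH_n\cap\NH_n^-$. It remains to prove the reverse inclusion.

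For the reverse inclusion I will use the matrix description $\NH_n^{\circ}\cong \End_{\kk}(V_n)\otimes \Sym_n^{\circ}$ coming from \eqref{eq_iso_circc}. Under this isomorphism, $\NH_n$ corresponds to $\End_{\kk}(V_n)\otimes\Sym_n$, and $\wNC_n$ corresponds to $\End_{\kk}(V_n)\otimes 1$ by Proposition~\ref{prop_rho_iso}. I will then identify $\NH_n^-$ in the same terms. Since $\wNC_n\subset\NH_n^-$ and $\wNC_n\cong\End_\kk(V_n)$, the center of $\NH_n^-$ commutes with a full matrix algebra, so $\NH_n^-$ should equal $\End_\kk(V_n)\otimes Z(\NH_n^-)$. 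Applying $\tau$ to $\NH_n\cong\Mat_{n!}(\kk)\otimes \Sym_n$, the center $Z(\NH_n^-)$ is $\tau(\Sym_n)=\psi(\Sym_n)$, which is the algebra of symmetric polynomials in $x_1^{-1},\dots,x_n^{-1}$. Here one notes that $\tau$ conjugates multiplication by a symmetric $f$ into multiplication by $\psi(f)$, because $\underline{x}_n$ factors out.

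More precisely, the argument runs as follows. Any $a\in\NH_n^-$ commutes with $\wNC_n=\End_\kk(V_n)\otimes 1$ inside $\End_\kk(V_n)\otimes\Sym_n^\circ$. Hence, writing $a=\sum_k E_k\otimes c_k$ in a matrix unit decomposition, the relevant structure is controlled by the centralizer. The cleaner route is the decomposition
\[
\NH_n^- = \wNC_n\cdot \psi(\Sym_n)\cong \End_\kk(V_n)\otimes\psi(\Sym_n).
\]
This follows by applying $\tau$ to $\NH_n=\wNC_n\cdot\Sym_n$ (Proposition~\ref{prop_basis_NH} together with Corollary~\ref{cor_three_coincide}), using that $\tau$ preserves $\wNC_n$ and sends $\Sym_n$ to $\psi(\Sym_n)$.

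The intersection then reduces to computing, inside $\End_\kk(V_n)\otimes\Sym_n^\circ$,
\[
(\End_\kk(V_n)\otimes\Sym_n)\cap(\End_\kk(V_n)\otimes\psi(\Sym_n)) = \End_\kk(V_n)\otimes(\Sym_n\cap\psi(\Sym_n)).
\]
This holds because tensoring over a field with a fixed finite basis commutes with intersections of subspaces. Finally, $\Sym_n\cap\psi(\Sym_n)=\kk$ inside $\Sym_n^\circ\subset R_n^\circ$: a Laurent polynomial that is both a polynomial in the $x_i$ and a polynomial in the $x_i^{-1}$ has only the constant monomial. This gives $\NH_n\cap\NH_n^-=\End_\kk(V_n)\otimes 1=\wNC_n$.

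The main obstacle is making the identification $\NH_n^-\cong\End_\kk(V_n)\otimes\psi(\Sym_n)$ compatible with the fixed isomorphism $\NH_n^\circ\cong\End_\kk(V_n)\otimes\Sym_n^\circ$. The key point to check is that $\tau$ is an $\Sym^\circ_n$-semilinear map of $R_n^\circ$, twisted by $\psi$, which preserves $V_n$ (Proposition~\ref{prop_involution_endomorphism}). Hence conjugation by $\tau$ acts on $\End_\kk(V_n)\otimes\Sym_n^\circ$ as $\mathrm{Ad}(\tau|_{V_n})\otimes\psi$. Given this, the images of $\NH_n$ and $\NH_n^-$ are the stated tensor products, and the intersection computation goes through.
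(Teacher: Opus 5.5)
Your argument is correct and is essentially the alternative proof the paper sketches at the end of its proof: identify $\NH_n\cong\End_{\kk}(V_n)\otimes\Sym_n$ and $\NH_n^-\cong\End_{\kk}(V_n)\otimes\Sym_n^-$ inside $\End_{\kk}(V_n)\otimes\Sym_n^{\circ}$ and use $\Sym_n\cap\Sym_n^-=\kk$. You supply more detail on the compatibility, via $\tau=\tau|_{V_n}\otimes\psi$. The paper's primary argument is different: by the grading bounds the intersection is finite-dimensional, and it is $\slt$-stable, hence it lies in $\NH_n^{\mathsf{fd}}=\wNC_n$.

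One slip to fix: the sentence asserting that every $a\in\NH_n^-$ commutes with $\wNC_n$ is false (take $a=\partial_i^-$). You mean elements of $Z(\NH_n^-)$, and your final route does not use that sentence anyway.
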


\begin{proof} Proposition~\ref{prop_below} gives the inclusion $\wNC_n\subset \NH_n\cap\NH_n^-$. Graded subalgebras $\NH_n$ and $\NH_n^-$ are bounded from below and from above, respectively. Their intersection lives in finitely many degrees. Since $\slt$ action preserves each of these subalgebras, $\slt$ acts on the intersection as well, and $\NH_n\cap\NH_n^-\subset \NH_n^{\mathsf{fd}}$. Corollary~\ref{cor_three_coincide} and the chain of inclusions $\wNC_n\subset \NH_n\cap\NH_n^-\subset \NH_n^{\mathsf{fd}}$ implies the equality. 

For an alternative proof of the proposition, one can use canonical isomorphisms 
\[
\NH_n\cong \End_{\kk}(V_n)\otimes \Sym_n,  \ \ \NH_n^-\cong \End_{\kk}(V_n)\otimes \Sym_n^-,
\]
and observe that the intersection $\Sym_n\cap \Sym_n^-=\kk$. Here $\Sym_n^-= \kk[x_1^{-1},\dots, x_n^{-1}]^{S_n}\cong Z(\NH_n^-)$ is the subring of symmetric polynomials in $x_1^{-1},\dots, x_n^{-1}$, naturally isomorphic to the center of $\NH_n^-$. 
\end{proof}

\begin{cor}\label{cor_four_equal}
The following subalgebras of $\NH^{\circ}_n$ are equal: 
\begin{equation}
    \wNC_n = \mathsf{M}_n =\NH_n^{\mathsf{fd}} = \NH_n\cap \NH_n^-. 
\end{equation}
\end{cor}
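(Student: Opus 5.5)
This corollary merely combines results already proved, so the plan is to chain the existing equalities; no new computation is needed. First I would invoke Corollary~\ref{cor_three_coincide}, which gives $\wNC_n=\mathsf{M}_n=\NH_n^{\mathsf{fd}}$ as subalgebras of $\NH_n$. Then I would invoke Proposition~\ref{prop_sub_two_equal}, which gives $\wNC_n=\NH_n\cap\NH_n^-$ as subalgebras of $\NH_n^{\circ}$. Putting these together yields the four-fold equality.

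The one point to check is that all four objects live in the same ambient algebra $\NH_n^{\circ}$, so that the equalities are equalities of subsets there. The algebras $\wNC_n$, $\mathsf{M}_n$ and $\NH_n^{\mathsf{fd}}$ are defined inside $\NH_n$. I would note that $\NH_n\subset\NH_n^{\circ}$: the ring $R_n^{\circ}$ is the localization of $R_n$ at $x_1\cdots x_n$, and each generator of $\NH_n$ extends uniquely to $R_n^{\circ}$ because it commutes with multiplication by the symmetric element $x_1\cdots x_n$. This restriction map is injective, since $R_n\subset R_n^{\circ}$. Identifying $\NH_n$ with its image in $\NH_n^{\circ}$ therefore places all four subalgebras in a common algebra.

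The same compatibility must hold for the $\slt$-action. The action on $\NH_n^{\circ}$ in \eqref{eq_action_4} extends the action on $\NH_n$, so the notion $\NH_n^{\mathsf{fd}}$ is unambiguous. This was already used in the proof of Proposition~\ref{prop_sub_two_equal}, so there is no real obstacle; the hardest step, to the extent there is one, is just this bookkeeping about the ambient algebras.
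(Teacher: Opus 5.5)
Your proposal is correct and takes the same approach as the paper. There the corollary is stated with no separate proof, as the immediate combination of Corollary~\ref{cor_three_coincide} and Proposition~\ref{prop_sub_two_equal}. Your check that all four subalgebras sit inside the common ambient algebra $\NH_n^{\circ}$ is harmless bookkeeping that the paper leaves implicit.
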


\begin{remark}\label{rmk_tau_slt}
Under the isomorphism $\NH_n\lra \NH_n^-$ induced by $\tau$ and taking $\partial_i$ to $\partial_i^-$ and $x_j$ to $x_j^{-1}$ the actions of $\slt$ on $\NH_n$ and $\NH_n^-$ are intertwined via the Cartan involution 
$\mathsf{e}\mapsto \mathsf{f}, \mathsf{f}\mapsto \mathsf{e}, \mathsf{h}\to -\mathsf{h}$ of $\slt$. 
\end{remark}

\begin{remark}\label{rmk_inv_tauone}
Algebra $\NH_n$ admits an anti-involution $\tau_1$ given by 
\begin{equation}\label{eq_tauone}
    \tau_1(x_i)=x_{n+1-i}, \ \ \tau_1(\partial_j)=-\partial_{n-j}, \ \ 1\le i\le n, 1\le  j \le n-1, 
\end{equation}
since the defining relations \eqref{eq_nilHecke_def} are invariant under this substitution combined with reversing the order of the product of generators ($\tau_1$ rotates a diagram representing a product element of $\NH_n$ by $180^{\circ}$ and scales it by $-1$ if the diagram has odd number of crossings). Then $\tau_1(\partial_j^-)=\tau_1(-x_j\partial_jx_{j+1}) = - x_{n-j}(-\partial_{n-j})x_{n+1-j}= -\partial_{n-j}^-$. 

Consequently, $\tau_1$ restricts to an anti-involution of $\wNC_n$, also denoted $\tau_1$ and given by 
\begin{equation}\label{eq_tau_one}
     \tau_1(\partial_j)=-\partial_{n-j}, \ \ \tau_1(\partial_j^-)=-\partial_{n-j}^-, \ \  1\le  j \le n-1. 
\end{equation}
Applying this antiinvolution to the relations \eqref{eq_shortt} and \eqref{eq_longg} we again get the relations \eqref{eq_shorttt} and \eqref{eq_longgg}, respectively. Anti-involution $\tau_1$ of $\wNC_n$ commutes with the involution $\tau$. 

Likewise, $\tau_1$ extends to an anti-involution on $\NH_n^-$ via $\tau_1(x_i^{-1})=x_{n+1-i}^{-1}$ and to an anti-involution on $\NH_n^{\circ}$. 
\end{remark}

\vspace{0.1in}


\subsection{A minimal categorification of the divided powers ring}\label{subsection_minimal}

Natural inclusions of algebras $\NH_n\otimes \NH_m\lra \NH_{n+m}$ induce
natural inclusions $\NC_n^{\varepsilon}\otimes \NC_m^{\varepsilon}\lra \NC_{n+m}^{\varepsilon}$ for $\varepsilon \in \{+,-\}$. In turn, we
obtain natural inclusions
\begin{equation}\label{eq_nat_inclusions}
  \wNC_n\otimes \wNC_m\lra \wNC_{n+m}.
\end{equation}
This inclusion takes $\partial_i^{\varepsilon}\otimes 1$ to $\partial_i^{\varepsilon}$ and $1\otimes\partial_j^{\varepsilon} $
to $\partial_{n+j}^{\varepsilon}$ for $\varepsilon\in\{+,-\}$.

Consider a $\kk$-linear monoidal category $\mcNC'$ with a generating object $\mcE_0$ and
\begin{equation}\label{eq_cat_C}
  \Hom_{\mcNC'}(\mcE_0^{\otimes n},\mcE_0^{\otimes m}) = \begin{cases} \wNC_n & \mathrm{if} \ n=m, \\ 0 & \mathrm{otherwise} \end{cases}.
\end{equation}
The tensor product is given on morphisms via  maps in \eqref{eq_nat_inclusions}.

\begin{figure}[!h]
\label{fig_crossings_2}
\[ \hackcenter{\begin{tikzpicture}[scale=0.375]
      \node at (-2,1) {$\pd \, = \, $};
        \draw (0,0)--(2,2);
        \draw(2,0)--(0,2);
    \end{tikzpicture}}
    \qquad\qquad 
    \hackcenter{\begin{tikzpicture}[scale=0.375]
      \node at (-2,1) {$\pd^- \, = \, $};
        \draw (0,0)--(2,2);
        \draw (1,1) circle (5pt);
        \draw(2,0)--(0,2); 
    \end{tikzpicture}}
\]
\caption{Diagrams for the generators $\partial,\partial^-$ of $\mcNC'$.}
\end{figure}

Category $\mcNC'$ has two generating morphisms $\partial,\partial^-\in \End_{\mcNC'}(\mcE_0^{\otimes 2})$, represented by crossings of two types and shown in Figure~\ref{fig_crossings_2} with the following defining relations: 

\begin{itemize}
    \item Relations shown in Figure~\ref{nc_relations} on the left: $\partial^2=(\partial^-)^2=0$ and in the center: 
    \begin{eqnarray*}
    (\partial\otimes 1)(1\otimes \partial)(\partial\otimes 1) & = & (1\otimes \partial)(\partial\otimes 1)(1\otimes \partial), \\
 (\partial^-\otimes 1)(1\otimes \partial^-)(\partial^-\otimes 1) & = & (1\otimes \partial^-)(\partial^-\otimes 1)(1\otimes \partial^-),
    \end{eqnarray*}
    \item Relation $\partial \partial^-+\partial^-\partial=1$, shown in Figure~\ref{fig_more_rels}  top right, 
    \item Relations shown in Figure~\ref{fig_more_rels} on lines 2 and 3, obtained from \eqref{eq_shortt} and \eqref{eq_longg} by replacing $\partial_i$, $\partial_{i+1}$, $\partial_i^-$,$\partial_{i+1}^-$ by $\partial\otimes 1$, $1\otimes \partial$, $\partial^-\otimes 1$, $1\otimes \partial^-$, respectively. 
\end{itemize}

\vspace{0.1in} 

Form the Karoubi envelope 
\[\mcNC :=\Kar(\mcNC').
\]
Due to an isomorphism with the matrix algebra: 
\[
\End_{\mcNC'}(\mcE_0^{\otimes n}) \cong \wNC_n\cong \Mat_{n!}(\kk),
\]
the Karoubi envelope contains objects, denoted $\mcE_0^{(n)}$, such that $n! \mcE_0^{(n)}\cong \mcE_0^{\otimes n}$. 
Specifically, in \eqref{eq_id_e2}, idempotent $e_2=\partial_1^-\partial_1 \in \wNC_2$ and $e_{2,i}\in \wNC_n$. Consequently, $e_n\in \wNC_n$ and we can define $\mcE_0^{(n)}:=(\mcE_0^{\otimes n},e_n)$. 
Note that $\End_{\mcNC}(\mcE_0^{(n)})\cong \kk$.

Inclusions of rings $\wNC_n\subset \NH_n$ induce a faithful monoidal functor $\mcNC' \lra \mcNH'$ taking $\mcE_0$ to $\mcE$ and inducing a monoidal functor on their Karoubi envelopes $\mcNC \lra \mcNH$. The latter functor induces an isomorphism of Grothendieck rings of additive monoidal categories 
\[
K_0(\mcNC) \cong K_0(\mcNH) \cong \Z\{E\},  
\]
assuming $K_0(\kk)\cong \Z$, which is the case when $\kk$ is a field, for instance. 

\begin{remark}\label{rmk_graded}
The Lie algebra $\slt$ acts on monoidal categories $\mcNH'$ and  $\mcNC'$, via its action on the endomorphism rings of objects $\mcE^{\otimes n}$ and $\mcE_0^{\otimes n}$. Forming Karoubi completions while keeping track of an $\slt$ action requires care, see~\cite{EQ23}, and we do not consider it here.
\end{remark}

\begin{remark}
    Categories $\mcNH'$ and $\mcNH$, see \eqref{eq_NH_cat_homs}, are  naturally symmetric monoidal, with the transposition morphism of $\mcE^{\otimes 2}$ induced by the transposition automorphism $x_1\leftrightarrow x_2$ on $\kk[x_1,x_2]$. This is an endomorphism of $\kk[x_1,x_2]$ as a $\Sym_2$-module. It can be written as $1+(x_2-x_1) \partial_1 \in \NH_2$. This element is not in $\wNC_2$, and this symmetric monoidal structure does not restrict to one on $\mcNC'$, see \eqref{eq_cat_C}. 
\end{remark}

\begin{remark} Rings $\wNC_n$ are naturally graded, and the minimal categorification above naturally refines to a graded one, giving an example of a minimal categorification of the $q$-deformation $\Z_q\{E\}$ of $\Z\{E\}$, see the earlier discussion following \eqref{eq_q_deform} and Definition~\ref{def_graded} in the next section.  
\end{remark}

%
%

\section{A classification of some minimal categorifications}\label{sec_classify}

We continue to work over a ground field $\kk$. 

\begin{definition}\label{def_min_one}
A \emph{minimal categorification} of the divided powers ring $\Z\{E\}$ is a
$\kk$-linear additive monoidal category $\mcC'$ with a generating object $X$
such that every object is a finite direct sum of objects $X^{\otimes n}$ and
\[
\Hom_{\mcC'}(X^{\otimes n},X^{\otimes m})\cong
\begin{cases}
\Mat_{n!}(\kk),&n=m,\\
0,&n\neq m.
\end{cases}
\]
\end{definition}

Such a category $\mcC'$ may also be called a minimal categorification of the
positive half of $\slt$. Its Karoubi envelope
$\mcC:=\Kar(\mcC')$ contains objects representing the divided powers on the
Grothendieck group. If $K_0(\kk)\cong\Z$, then
\[
K_0(\mcC)\cong \Z\{E\}.
\]

Write
\[
A_n:=\End_{\mcC'}(X^{\otimes n}),
\]
so that $A_0=\bk$. 
The monoidal structure gives unital algebra homomorphisms
\begin{equation}\label{eq_nm_homs}
\psi_{n,m}:A_n\otimes A_m\longrightarrow A_{n+m},\qquad n,m\geq 0,
\end{equation}
which satisfy associativity relations
\begin{equation}\label{eq_assoc_relations}
\psi_{n+m,r}\circ(\psi_{n,m}\otimes \operatorname{id}_{A_r})
=
\psi_{n,m+r}\circ(\operatorname{id}_{A_n}\otimes\psi_{m,r})
\end{equation}
and the unital relations
\begin{equation}\label{eq:psi 0 n}
    \psi_{0,n}=\psi_{n,0}=\id_{A_n}.
\end{equation}
Conversely, a system of matrix algebras $A_n$ (with $A_0=\bk$) and homomorphisms
\eqref{eq_nm_homs} satisfying \eqref{eq_assoc_relations} and \eqref{eq:psi 0 n} determines such a
minimal categorification.

We will use the graded version of this notion.

\begin{definition}\label{def_graded}
 A \emph{graded minimal
categorification} of the quantum divided powers ring $\Z_q\{E\}$ is a
$\kk$-linear $\mathbb Z$-graded additive monoidal category $\mcC'$ with a
generating object $X$ such that every object is a finite direct sum of grading
shifts of the objects $X^{\otimes n}$ and
\[
\mathsf{HOM}_{\mcC'}(X^{\otimes n},X^{\otimes m})\cong
\begin{cases}
\Mat_{[n]!}(\kk),&n=m,\\
0,&n\neq m,
\end{cases}
\]
as graded algebras when $n=m$. Here
\[
\mathsf{HOM}_{\mcC'}(M,N)
:=\bigoplus_{j\in\mathbb Z}\Hom_{\mcC'}(M\{j\},N)
\]
and $\Mat_{f(q)}(\kk)$ denotes the graded endomorphism algebra of a free graded
module of graded dimension $f(q)$, for $f(q)\in\BN[q,q^{-1}]$ a Laurent polynomial.
\end{definition}

In the graded setting we use the same notation
\[
A_n:=\mathsf{HOM}_{\mcC'}(X^{\otimes n},X^{\otimes n})
\]
for the full graded endomorphism algebra. The maps $\psi_{n,m}$ should be homomorphisms of $\Z$-graded algebras. 
 Then the
natural map $\Z_q\{E\}\to K_0(\Kar(\mcC'))$ is an isomorphism.

For the rest of this section
we work with graded minimal categorifications. Equivalently, we work
with the systems $(A_n,\psi_{n,m})$ above. 
We classify these categories up to  graded monoidal
isomorphism preserving the chosen generator.
 Such an isomorphism
induces graded algebra isomorphisms
\[
\Phi_n:A_n\stackrel{\sim}{\longrightarrow}A_n'
\]
commuting with all maps $\psi_{n,m}$.

For each $n$, choose the unique overall grading shift of the simple graded
$A_n$-module $W_n$ for which
\[
A_n\cong\End_{\kk}(W_n),\qquad \gdim W_n=[n]!.
\]
Thus the highest and lowest degrees of $W_n$ are respectively
$\binom{n}{2}$ and $-\binom{n}{2}$.

\subsection{The two- and three-strand structure}\label{subsec:abstract 2,3 string structure}

We have $A_0\cong A_1\cong\kk$ and
$A_2\cong\End_{\kk}(W_2)$, where
\[
W_2\cong\kk\{-1\}\oplus\kk\{1\}.
\]
Choose homogeneous elements
\[
\partial\in A_2^{-2},\qquad \partial^-\in A_2^2
\]
satisfying
\begin{equation}\label{eq_same_i}
\partial^2=(\partial^-)^2=0,
\qquad
\partial\partial^-+\partial^-\partial=1.
\end{equation}
The pair is unique up to scaling 
\begin{equation}\label{eq_rescale_pair}
\partial\longmapsto t\partial,
\qquad
\partial^-\longmapsto t^{-1}\partial^-,
\qquad t\in\kk^\times.
\end{equation}
For $1\leq i\leq n-1$, let $\partial_i,\partial_i^-\in A_n$ be the images of
these elements under the inclusion which places $A_2$ in positions $i,i+1$.
Then
\begin{equation}\label{eq_same_i_3}
\partial_i^2=(\partial_i^-)^2=0,
\qquad
\partial_i\partial_i^-+\partial_i^-\partial_i=1.
\end{equation}
Let $A_n^+$ and $A_n^-$ be the subalgebras generated by the $\partial_i$ and
$\partial_i^-$, respectively.

We first analyze three strands. Let $v$ span the top degree $W_3^3$ and assume
\begin{equation}\label{eq_positive_cyclic_three}
A_3^+v=W_3.
\end{equation}
Then
\begin{equation}\label{eq_basis_W3}
v,\ \partial_1v,\ \partial_2v,\ \partial_{12}v,\
\partial_{21}v,\ v_-:=\partial_{121}v
\end{equation}
is a homogeneous basis of $W_3$, and there is a unique scalar
$\nu\in\kk^\times$ such that
\begin{equation}\label{eq_nu}
\partial_2\partial_1\partial_2
=\nu\,\partial_1\partial_2\partial_1.
\end{equation}
Thus $A_3^+$ is a copy of the nilCoxeter algebra with the second generator
rescaled; in particular
\[
\gdim A_3^+=q^{-3}[2][3].
\]
The scalar $\nu$ cannot be changed by the common rescaling
\eqref{eq_rescale_pair}.

Homogeneity together with the relations $\partial_i\partial_i^-+\partial_i^-\partial_i=1$ imply that the
action of $\partial_1^-$ has the form
\begin{equation}\label{eq_action_minus_one}
\begin{aligned}
\partial_1^-v_-&=\partial_{21}v+\lambda_1\partial_{12}v,
&\qquad \partial_1^-\partial_1v&=v,\\
\partial_1^-\partial_{12}v&=\partial_2v+\lambda_2\partial_1v,
&\partial_1^-\partial_2v&=-\lambda_2v,\\
\partial_1^-\partial_{21}v&=\lambda_3\partial_1v-\lambda_1\partial_2v,
&\partial_1^-v&=0,
\end{aligned}
\end{equation}
and the action of $\partial_2^-$ has the form
\begin{equation}\label{eq_action_minus_two}
\begin{aligned}
\partial_2^-v_-&=\nu^{-1}\partial_{12}v+\mu_1\partial_{21}v,
&\qquad \partial_2^-\partial_1v&=-\mu_2v,\\
\partial_2^-\partial_{12}v&=\mu_3\partial_2v-\nu\mu_1\partial_1v,
&\partial_2^-\partial_2v&=v,\\
\partial_2^-\partial_{21}v&=\partial_1v+\mu_2\partial_2v,
&\partial_2^-v&=0.
\end{aligned}
\end{equation}
The six displayed parameters are not independent.

\begin{prop}\label{prop_three_strand_parameters}
With the notation above, the square-zero relations for the negative generators
are equivalent to
\begin{equation}\label{eq_parameter_constraints}
\lambda_3=-\lambda_1\lambda_2,
\qquad
\mu_3=-\nu\mu_1\mu_2.
\end{equation}
Consequently, once $\nu$ is fixed, four parameters
$\lambda_1,\lambda_2,\mu_1,\mu_2$ remain.
Moreover,
\begin{equation}\label{eq_negative_cyclic_condition}
A_3^-v_-=W_3
\quad\Longleftrightarrow\quad
(1-\nu\lambda_1\mu_1)(1-\lambda_2\mu_2)\neq0.
\end{equation}
Under the relations \eqref{eq_parameter_constraints}, one also has
\begin{equation}\label{eq_negative_braid}
\partial_2^-\partial_1^-\partial_2^-
=\nu^{-1}\partial_1^-\partial_2^-\partial_1^-.
\end{equation}
In particular, if \eqref{eq_negative_cyclic_condition} holds, then
\[
\gdim A_3^-=q^3[2][3].
\]
\end{prop}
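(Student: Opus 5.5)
Everything here is a finite computation on the six-dimensional module $W_3$ in the basis \eqref{eq_basis_W3}. I would write $\partial_1,\partial_2$ as explicit $6\times 6$ matrices, using $\partial_2\partial_1\partial_2 v=\nu v_-$ and that all length-$\ge 4$ words vanish, and $\partial_1^-,\partial_2^-$ from \eqref{eq_action_minus_one}, \eqref{eq_action_minus_two}. The four claims then become identities or determinants in these parameters. I would check them in the following order.

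\textbf{Square-zero relations.} First I compute $(\partial_1^-)^2$ on each basis vector. Degree reasons make most entries vanish automatically. For example, $(\partial_1^-)^2\partial_1 v=\partial_1^- v=0$, and
\[
(\partial_1^-)^2\partial_2 v=-\lambda_2\partial_1^-v=0 .
\]
The nontrivial entries are $(\partial_1^-)^2 \partial_{12}v$, $(\partial_1^-)^2\partial_{21}v$ and $(\partial_1^-)^2 v_-$. A direct computation gives
\[
(\partial_1^-)^2\partial_{12}v=\partial_1^-\partial_2v+\lambda_2\partial_1^-\partial_1v=-\lambda_2v+\lambda_2v=0,
\qquad
(\partial_1^-)^2\partial_{21}v=\lambda_3v+\lambda_1\lambda_2 v .
\]
For $v_-$ the result is a combination of the previous two expressions. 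Hence $(\partial_1^-)^2=0$ if and only if $\lambda_3=-\lambda_1\lambda_2$. The same computation for $\partial_2^-$, with $\nu$ carried through from $\partial_{212}=\nu\partial_{121}$, gives $\mu_3=-\nu\mu_1\mu_2$. This establishes \eqref{eq_parameter_constraints}.

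\textbf{Negative braid relation.} Substituting the constraints, I evaluate both sides of \eqref{eq_negative_braid} on $v_-$. Both sides have degree $6$, so they map $v_-$ to a multiple of $v$ and kill every other basis vector for degree reasons. The relation therefore reduces to a single scalar identity: $\partial_2^-\partial_1^-\partial_2^-v_-$ and $\nu^{-1}\partial_1^-\partial_2^-\partial_1^-v_-$ must have the same coefficient on $v$.

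\textbf{Cyclicity.} The grading makes $A_3^-v_-=W_3$ equivalent to surjectivity in each degree. In degree $-1$ the images $\partial_1^-v_-$ and $\partial_2^-v_-$ must span the two-dimensional space $\langle\partial_{12}v,\partial_{21}v\rangle$. Their coefficient determinant is
\[
\det\begin{pmatrix}\lambda_1&1\\ \nu^{-1}&\mu_1\end{pmatrix}=\lambda_1\mu_1-\nu^{-1},
\]
which is a nonzero multiple of $1-\nu\lambda_1\mu_1$. In degree $1$, one must check that $\partial_1^-,\partial_2^-$ applied to $\langle\partial_{12}v,\partial_{21}v\rangle$ span $\langle\partial_1v,\partial_2v\rangle$. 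Using the cyclicity already obtained in degree $-1$, this reduces to a $2\times2$ determinant built from the $\lambda$'s, $\mu$'s and $\nu$. I expect that determinant to factor as the degree-$(-1)$ factor times $(1-\lambda_2\mu_2)$. Degree $3$ is then automatic, since $\partial_1^-\partial_1v=v$.

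\textbf{Graded dimension.} If the cyclicity condition holds, the six elements $\partial^-_w$ act injectively on $v_-$, so they are linearly independent. The braid relation with $\nu^{-1}$ and the square-zero relations make $A_3^-$ a rescaled nilCoxeter algebra of dimension at most $6$. Its generators have degree $+2$, which gives $\gdim A_3^-=q^3[2][3]$.

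\textbf{Main obstacle.} The step I expect to be hardest is the degree-$1$ determinant: showing that it factors exactly as claimed once the constraints are used. I would organize it through the two-step compositions $\partial_i^-\partial_j^-v_-$ and track the $\nu$ factors carefully, since that is where a mistake is most likely.
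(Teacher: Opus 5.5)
Your proposal is correct and follows essentially the same route as the paper: you evaluate $(\partial_i^-)^2$ and both triple products directly on the basis $v,\partial_1v,\dots,v_-$, and you read cyclicity off the six vectors $\partial^-_w v_-$. Their matrix is block-diagonal by degree, so your degree-by-degree check is just the paper's $6\times 6$ determinant evaluated block by block.

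One small correction to your expectation for the degree-$1$ block. Computed from $\partial_1^-\partial_2^-v_-$ and $\partial_2^-\partial_1^-v_-$, it is, up to a nonzero scalar, $(1-\nu\lambda_1\mu_1)^2(1-\lambda_2\mu_2)$, with the first factor squared. If instead you apply $\partial_i^-$ to $W_3^{-1}$, it is just $1-\lambda_2\mu_2$. Both agree with the paper's total $(1-\nu\lambda_1\mu_1)^4(1-\lambda_2\mu_2)^2$, and neither affects the nonvanishing criterion.
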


\begin{proof}
Apply $(\partial_1^-)^2$ and $(\partial_2^-)^2$ to the basis
\eqref{eq_basis_W3}. For instance, $\pd_1^-\pd_1^-\pd_2v=\pd_1^-(-\lbd_2v)=0$ and
\[\pd_1^-\pd_1^-\pd_{21}v=\pd_1^-(\lbd_3\pd_1v-\lbd_1\pd_2v)=\lbd_3v+\lbd_1\lbd_2v.\]
The only nonzero coefficients which remain are scalar
multiples of
$\lambda_3+\lambda_1\lambda_2$ and
$\mu_3+\nu\mu_1\mu_2$, giving
\eqref{eq_parameter_constraints}.
After these substitutions, the determinant
of the six vectors
\[
v_-,\ \partial_1^-v_-,\ \partial_2^-v_-,\
\partial_1^-\partial_2^-v_-,\
\partial_2^-\partial_1^-v_-,\
\partial_1^-\partial_2^-\partial_1^-v_-
\]
with respect to the basis \eqref{eq_basis_W3} is, up to a nonzero scalar,
\[
(1-\nu\lambda_1\mu_1)^4(1-\lambda_2\mu_2)^2.
\]
This proves \eqref{eq_negative_cyclic_condition}. Finally, direct substitution
in \eqref{eq_action_minus_one}--\eqref{eq_action_minus_two} gives
\eqref{eq_negative_braid}.
\end{proof}

\begin{remark}
Hence, $A_3^+$-cyclicity (three strands) does not by itself imply $A_3^-$-cyclicity. More importantly, the parameters $\lbd_i$ and $\mu_i$ for the double nilCoxeter
algebra are not all zero, as we will see in the next subsection. Potentially, we have a 5-parameter family of monoidal categories, labeled by $(\nu,\lambda_1,\lambda_2,\mu_1,\mu_2)$, with the nonvanishing condition in \eqref{eq_negative_cyclic_condition}, but we don't investigate that multi-parameter family in the present paper.
\end{remark}

\subsection{The double nilCoxeter three-strand structure}
Consider the polynomial realization of $\wNC_3$ from
Section~\ref{doubled_nilCoxeter}. After the overall grading shift which puts the
top degree in degree $3$, take
\[
v=x_1^2x_2.
\]
Then
\[
\partial_1v=x_1x_2,\qquad
\partial_2v=x_1^2,\qquad
\partial_{12}v=x_1+x_2,\qquad
\partial_{21}v=x_1,\qquad
\partial_{121}v=1.
\]
Using $\partial_i^-=-x_i\partial_i x_{i+1}$, one obtains
\begin{equation}\label{eq_standard_negative_actions}
\begin{aligned}
\partial_1^-\partial_{121}v&=\partial_{21}v,
&\qquad
\partial_1^-\partial_{12}v&=\partial_2v+\partial_1v,\\
\partial_2^-\partial_{121}v&=\partial_{12}v-\partial_{21}v,
&\partial_2^-\partial_{12}v&=\partial_1v.
\end{aligned}
\end{equation}
Consequently, for the double nilCoxeter algebra,
\begin{equation}\label{eq_standard_parameters}
\nu=1,
\qquad
(\lambda_1,\lambda_2,\lambda_3)=(0,1,0),
\qquad
(\mu_1,\mu_2,\mu_3)=(-1,0,0).
\end{equation}
In particular, choosing the parameters $\lbd_i,\mu_i$, $1\le i \le 3$ to all be zero describes a different
three-strand structure.

It is useful to record the natural one-parameter twist of the double
nilCoxeter tower of algebras. For $\nu\in\kk^\times$, define a strict graded monoidal
category $\mcNC_\nu'$ whose endomorphism algebras are the usual $\wNC_n$ and
whose tensor product maps, on homogeneous $a\in\wNC_n$ and $b\in\wNC_m$, are
\begin{equation}\label{eq_twisted_tensor_map}
\psi_{n,m}^{\nu}(a\otimes b)
:=\nu^{-n\deg(b)/2}\,\psi_{n,m}^{1}(a\otimes b),
\end{equation}
where $\psi^1$ is the inclusion  \eqref{eq_nat_inclusions}.
Since all degrees in $\wNC_n$ are even, the exponent is integral.
Associativity follows from
\[
n\deg(b)+(n+m)\deg(c) = m\deg(c)+n(\deg(b)+\deg(c)).
\]
We write $\mcNC_\nu:=\Kar(\mcNC_\nu')$; for $\nu=1$ this recovers the
graded category from Section~\ref{subsection_minimal}. The local generators of $\mcNC_\nu$ in positions $i,i+1$ are
\begin{equation}\label{eq_twisted_local_generators}
\partial_i^{(\nu)}=\nu^{i-1}\partial_i,
\qquad
(\partial_i^-)^{(\nu)}=\nu^{1-i}\partial_i^-.
\end{equation}
Hence the three-strand parameters of $\mcNC_\nu'$ are
\begin{equation}\label{eq_nu_standard_parameters}
(\lambda_1,\lambda_2,\lambda_3)=(0,\nu,0),
\qquad
(\mu_1,\mu_2,\mu_3)=(-\nu^{-1},0,0),
\end{equation}
and the positive braid scalar is $\nu$.

\begin{definition}\label{def_nu_standard}
A three-strand structure (namely, a choice of $W_3$ and operators $\pd_1^\pm,\pd_2^\pm$ as in Section \ref{subsec:abstract 2,3 string structure}) is called \emph{$\nu$-standard} if, with the
normalization above, it is isomorphic as a graded algebra with its four
distinguished local generators to the three-strand structure of
$\mcNC_\nu'$. Equivalently, it satisfies
\eqref{eq_positive_cyclic_three}, \eqref{eq_nu}, and
\eqref{eq_nu_standard_parameters}.
\end{definition}

\subsection{Cyclicity and local generation do not imply uniqueness}

The hypotheses suggested by the first three strand calculation do not by
themselves characterize $\mcNC'$. We give an explicit second minimal
categorification.

For $n\geq0$, let
\begin{equation}\label{eq_edge_Wn}
W_n^{\mathrm{edge}}\coloneqq \bigoplus_{w\in S_n}\kk v_w,
\qquad
\deg(v_w)=\binom{n}{2}-2\ell(w).
\end{equation}
Then $\gdim W_n^{\mathrm{edge}}=[n]!$. Define degree $\pm 2$ operators by
\begin{equation}\label{eq_edge_actions}
\partial_i v_w=
\begin{cases}
v_{s_iw},&\ell(s_iw)=\ell(w)+1,\\
0,&\ell(s_iw)=\ell(w)-1,
\end{cases}
\qquad
\partial_i^- v_w=
\begin{cases}
v_{s_iw},&\ell(s_iw)=\ell(w)-1,\\
0,&\ell(s_iw)=\ell(w)+1.
\end{cases}
\end{equation}
Thus $\partial_i^-$ reverses the weak-order edge defined by $\partial_i$.
The operators of each sign satisfy the nilCoxeter relations, and
\eqref{eq_same_i_3} holds.

To define the monoidal embeddings, let $S^{n,m}$ be the set of minimal length
representatives for the left cosets
$(S_n\times S_m)\backslash S_{n+m}$. Every $w\in S_{n+m}$ has a unique
factorization
\[
w=(w_1\times w_2)u,
\qquad w_1\in S_n,\ w_2\in S_m,\ u\in S^{n,m},
\]
with $\ell(w)=\ell(w_1)+\ell(w_2)+\ell(u)$. Put
\[
M_{n,m}\coloneqq \bigoplus_{u\in S^{n,m}}\kk x_u,
\qquad
\deg(x_u)=nm-2\ell(u).
\]
The factorization above gives a graded isomorphism
\begin{equation}\label{eq_edge_factorization}
W_{n+m}^{\mathrm{edge}}
\cong W_n^{\mathrm{edge}}\otimes W_m^{\mathrm{edge}}\otimes M_{n,m},
\qquad
v_{(w_1\times w_2)u}\longmapsto v_{w_1}\otimes v_{w_2}\otimes x_u.
\end{equation}
Let
\[
\End(W_n^{\mathrm{edge}})\otimes\End(W_m^{\mathrm{edge}})
\longrightarrow\End(W_{n+m}^{\mathrm{edge}})
\]
act on the first two factors in \eqref{eq_edge_factorization} and trivially on
$M_{n,m}$. The uniqueness of parabolic factorization, applied to three
consecutive blocks, gives the associativity relations
\eqref{eq_assoc_relations}. More explicitly, let $S^{n,m,r}$ denote the set of minimal-length
representatives for the cosets
$
(S_n\times S_m\times S_r)\backslash S_{n+m+r},
$
and denote the corresponding graded multiplicity space by $M_{n,m,r}$.
The uniqueness of parabolic factorization gives an identification
\[
W_{n+m+r}^{\mathrm{edge}}
\cong
W_n^{\mathrm{edge}}\otimes W_m^{\mathrm{edge}}
\otimes W_r^{\mathrm{edge}}\otimes M_{n,m,r}.
\]
Under this identification, both iterated embeddings in
\eqref{eq_assoc_relations} act on the first three factors and act
trivially on $M_{n,m,r}$. Hence the two embeddings coincide.

We therefore obtain a graded minimal
categorification, denoted $\mcC'_{\mathrm{edge}}$.

\begin{prop}\label{prop_edge_counterexample}
The category $\mcC'_{\mathrm{edge}}$ satisfies, for every $n$,
\[
A_n^+v_\id=W_n^{\mathrm{edge}},
\qquad
A_n^-v_{w_0}=W_n^{\mathrm{edge}},
\]
and the local elements $\partial_i,\partial_i^-$ generate all of
$\End(W_n^{\mathrm{edge}})$. Nevertheless, on three strands it has
\[
\nu=1,
\qquad
\lambda_i=\mu_i=0\quad(1\leq i\leq3),
\]
so it is not graded monoidally isomorphic to $\mcNC'$.
\end{prop}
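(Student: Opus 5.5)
We argue directly from the combinatorial definition \eqref{eq_edge_actions}, in four steps.

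\emph{Cyclicity.} For $A_n^+v_\id=W_n^{\mathrm{edge}}$, take $w\in S_n$ with a reduced expression $w=s_{i_1}\cdots s_{i_k}$. We claim
\[
\partial_{i_1}\cdots\partial_{i_k}v_\id=v_w .
\]
Reading right to left, each step $v_u\mapsto v_{s_iu}$ increases length by one, because every suffix of a reduced word is reduced. Hence each $v_w$ lies in $A_n^+v_\id$. Dually, choose a reduced expression for $w_0w^{-1}$, written as $s_{j_1}\cdots s_{j_r}$ with $w=s_{j_r}\cdots s_{j_1}w_0$. Applying the $\partial^-_{j}$'s successively to $v_{w_0}$ lowers length by one at each step, so $v_w\in A_n^-v_{w_0}$.

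\emph{Generation.} We use the triangularity argument of Proposition~\ref{prop_rho_iso}, which here becomes transparent. Put $\partial_{w_0}:=\partial_{i_1}\cdots\partial_{i_N}$ for a reduced word of $w_0$. Then $\partial_{w_0}v_u=0$ unless $u=\id$, since a nonzero result would need length $\ell(u)+N\le N$. Also $\partial_{w_0}v_\id=v_{w_0}$. So $\partial_{w_0}$ is the matrix unit $v_\id\mapsto v_{w_0}$.

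Composing on the right with a product of $\partial^-$'s that carries $v_x$ to $v_\id$ kills every other $v_y$. Indeed, such a product of length $\ell(x)$ sends $v_y$ either to $0$ or to $v_{y'}$ with $\ell(y')=\ell(y)-\ell(x)$, and $y'=\id$ forces $y=x$. Composing on the left with a product of $\partial^-$'s carrying $v_{w_0}$ to $v_z$ produces the matrix unit $v_x\mapsto v_z$. All matrix units are obtained this way, so the local elements generate $\End(W_n^{\mathrm{edge}})$.

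\emph{Three-strand parameters.} Take $v=v_\id$, which spans the top degree $3$ of $W_3$. Then
\[
\partial_1v=v_{s_1},\quad \partial_2v=v_{s_2},\quad \partial_{12}v=v_{s_1s_2},\quad \partial_{21}v=v_{s_2s_1},
\]
and
\[
\partial_{121}v=v_{w_0}=\partial_{212}v ,
\]
so $\nu=1$. Next compare with \eqref{eq_action_minus_one}. We have $\partial_1^-v_-=v_{s_1w_0}=v_{s_2s_1}=\partial_{21}v$, giving $\lambda_1=0$. We have $\partial_1^-\partial_{12}v=v_{s_2}=\partial_2v$, giving $\lambda_2=0$. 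Since $\ell(s_1s_2s_1)=3$, we get $\partial_1^-\partial_{21}v=0$, so $\lambda_3=0$.

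Similarly, for \eqref{eq_action_minus_two}: $\partial_2^-v_-=v_{s_1s_2}=\partial_{12}v$ with $\nu^{-1}=1$, giving $\mu_1=0$. We have $\partial_2^-\partial_1v=0$, giving $\mu_2=0$. Finally $\partial_2^-\partial_{12}v=0$, giving $\mu_3=0$. These agree with the constraints \eqref{eq_parameter_constraints}.

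\emph{Non-isomorphism.} This is the only step requiring care: we must check that the parameters are genuine invariants. Suppose $\Phi_n$ is a graded monoidal isomorphism preserving the generator. Then $\Phi_2$ carries the pair $(\partial,\partial^-)$ to a pair satisfying \eqref{eq_same_i}, hence to a rescaling \eqref{eq_rescale_pair} of the standard pair.

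Compatibility with $\psi$ means $\Phi_3$ sends the local generators $\partial_i,\partial_i^-$ to the rescaled local generators $t\partial_i,t^{-1}\partial_i^-$. The top-degree line of $W_3$ is unique, and $v$ is defined only up to a scalar; the intertwiner $W_3\to W_3'$ can be taken to send $v$ to $v'$. We therefore compute how $\nu$, $\lambda_i$, $\mu_i$ transform under $\partial\mapsto t\partial$, $\partial^-\mapsto t^{-1}\partial^-$ with $v$ fixed.

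$\nu$ is unchanged. In the formula for $\partial_1^-\partial_{12}v$, the left side is $\partial_1^-\partial_{12}v$ scaled by $t^{-1}\cdot t^2$, while $\partial_2v$ and $\partial_1v$ scale by $t$. Hence $\lambda_2$ is invariant, and the same count shows every $\lambda_i,\mu_i$ is invariant.

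The double nilCoxeter values \eqref{eq_standard_parameters} have $\lambda_2=1\ne0$, so no such isomorphism exists.
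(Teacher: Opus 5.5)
Your proposal is correct. Your cyclicity and three-strand computations match the paper's, written out in more detail, but your generation step takes a different route.

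\textbf{Generation.} The paper sets $T_i=\partial_i+\partial_i^-$ and notes that it is the permutation matrix of left multiplication by $s_i$. It then conjugates the diagonal projections $P_i=\partial_i^-\partial_i$ by the $T_w$, which yields projections for all pairwise comparisons. Products of these isolate each line $\kk v_w$, and the $T_w$ then move lines to produce all matrix units.

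You instead rerun the triangularity argument of Proposition~\ref{prop_rho_iso}. There $\partial_{w_0}$ is already the matrix unit $v_{\id}\mapsto v_{w_0}$, and sandwiching it between two products of negative generators gives every matrix unit as a single monomial.

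Your route uses only the fact that each generator either kills $v_w$ or moves it one step along the weak order. So it is shorter and works verbatim for any model with that property. The paper's route exposes the feature special to the edge model, namely the hidden $S_n$-action by permutation matrices.

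One small point: in ``$y'=\id$ forces $y=x$'', the length count alone does not suffice. You also need that a nonzero product $\partial^-_{k_m}\cdots\partial^-_{k_1}$ acts as $v_y\mapsto v_{gy}$ for the fixed element $g=s_{k_m}\cdots s_{k_1}=x^{-1}$. This is immediate from \eqref{eq_edge_actions}.

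\textbf{Non-isomorphism.} Your check that $\lambda_i,\mu_i$ are unchanged under $\partial\mapsto t\partial$, $\partial^-\mapsto t^{-1}\partial^-$ with $v$ fixed supplies a step the paper's proof leaves implicit. Each relation in \eqref{eq_action_minus_one}--\eqref{eq_action_minus_two} is homogeneous in $t$, since every right-hand term has one fewer positive generator than the left-hand side. The paper only records invariance of $\nu$, in Remark~\ref{rmk_nu_invariant}, yet here $\nu=1$ in both models and the distinguishing invariant is $\lambda_2$.

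Your claim that the intertwiner $W_3\to W_3'$ can be taken to send $v$ to $v'$ also deserves one clause. A graded isomorphism of matrix algebras is conjugation by a homogeneous linear isomorphism, and the normalization $\gdim W_3=[3]!$ forces that isomorphism to have degree $0$.
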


\begin{proof}
The two cyclicity assertions follow immediately from the two orientations of
weak order. For generation, set
\[
T_i\coloneqq\partial_i+\partial_i^-.
\]
Then $T_i$ is the permutation matrix for left multiplication by $s_i$ on the
basis $\{v_w\}$. Also
\[
P_i\coloneqq\partial_i^-\partial_i
\]
is the diagonal projection onto the span of those $v_w$ for which
$\ell(s_iw)=\ell(w)+1$. Conjugating the $P_i$ by the permutation matrices
$T_w$ gives the diagonal projections associated with arbitrary pairwise order
comparisons. Products of these projections isolate each one-dimensional
subspace $\kk v_w$, and the $T_w$ then produce all matrix units. Thus the local
generators generate the full matrix algebra.

On three strands, each negative operator simply reverses one positive
weak-order edge, and comparison with
\eqref{eq_action_minus_one}--\eqref{eq_action_minus_two} gives
$\lambda_i=\mu_i=0$. This differs from
\eqref{eq_standard_parameters}.
\end{proof}

Thus, even the simultaneous assumptions of positive cyclicity, negative
cyclicity, the nilCoxeter relations for both signs, local generation of every
$A_n$, and monoidal compatibility do not characterize the double nilCoxeter
categorification. In particular, a classification of arbitrary associative
systems of graded matrix algebra embeddings that categorify $\Z_q\{E\} $ or its classical version $\Z\{E\}$ seems to be an open problem.

\subsection{Three-strand rigidity}

Although the preceding hypotheses do not give uniqueness, the correct mixed
structure on three strands does.

\begin{thm}[Three-strand rigidity]\label{thm_three_strand_rigidity}
Let $(\mcC',X)$ be a strict graded minimal categorification over a field
$\kk$. If its three-strand structure is $\nu$-standard for some
$\nu\in\kk^\times$, then there is a strict graded monoidal isomorphism
\[
\mcC'\cong\mcNC_\nu'
\]
preserving the chosen generator. 
\end{thm}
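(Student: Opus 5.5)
The plan is to build, for every $n$, an explicit graded algebra isomorphism $\Phi_n:\End_{\mcNC_\nu'}(\mcE_0^{\otimes n})=\wNC_n\to A_n$ from generators and relations. I then check that the family $\{\Phi_n\}$ intertwines the tensor product maps $\psi^\nu_{n,m}$ of $\mcNC_\nu'$ with the maps $\psi_{n,m}$ of $\mcC'$.

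\textbf{Setup: local generators and a presentation of $\mcNC_\nu'$.} In $\mcC'$, write $\partial_i,\partial_i^-\in A_n$ for the images of the chosen $\partial,\partial^-\in A_2$ under $\psi(1_{A_{i-1}}\otimes -\otimes 1_{A_{n-i-1}})$. In $\mcNC_\nu'$, the corresponding local elements are $\partial_i^{(\nu)}=\nu^{i-1}\partial_i$ and $(\partial_i^-)^{(\nu)}=\nu^{1-i}\partial_i^-$, as in \eqref{eq_twisted_local_generators}. These are obtained from the $n=2$ elements by the same recipe, using $\psi^\nu$.

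Proposition~\ref{prop_wNC_def_rel} presents $\wNC_n$ by the generators $\partial_i,\partial_i^-$. Every defining relation there involves at most three adjacent indices. Rescaling the generators turns it into a presentation of $\wNC_n$ in the generators $\partial_i^{(\nu)},(\partial_i^-)^{(\nu)}$. In this presentation, each relation supported on positions $\{i,i+1,i+2\}$ is exactly the image of the corresponding relation on positions $\{1,2,3\}$ of $\wNC_3$ under $\psi^\nu(1_{i-1}\otimes-\otimes 1)$; this uses associativity of $\psi^\nu$. Similarly, each far-commutation relation is the image of $a\otimes 1\cdot 1\otimes b=1\otimes b\cdot a\otimes 1$.

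\textbf{Step 1: the relations hold in $A_n$, giving $\Phi_n$.} Define $\Phi_n(\partial_i^{(\nu)})=\partial_i$ and $\Phi_n((\partial_i^-)^{(\nu)})=\partial_i^-$, and check the defining relations in $A_n$ as follows.
\begin{itemize}
\item The far-commutation relations hold in $A_n$ by the interchange law, i.e.\ because $\psi$ is an algebra map on a tensor product together with associativity \eqref{eq_assoc_relations}.
\item The one-index relations hold by \eqref{eq_same_i}.
\item For the three-strand relations, it suffices by associativity to verify them for $n=3$ at positions $1,2,3$, and then transport them with $\psi(1_{A_{i-1}}\otimes-\otimes1)$.
\end{itemize}
For $n=3$, $\nu$-standardness (Definition~\ref{def_nu_standard}) says the following. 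The actions of $\partial_1^\pm,\partial_2^\pm$ on $W_3$, written in the basis \eqref{eq_basis_W3}, are given by \eqref{eq_nu}, \eqref{eq_action_minus_one}, \eqref{eq_action_minus_two} with parameters \eqref{eq_nu_standard_parameters}. These are exactly the matrices of the four local generators of $\mcNC_\nu'$ on its own $W_3$. Since $A_3=\End_\kk(W_3)$ acts faithfully on $W_3$, every polynomial relation satisfied by the generators in $\mcNC_\nu'$ also holds in $A_3$.

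I expect this step to be the main obstacle. It requires care that the normalization of $\partial,\partial^-$ fixed in \eqref{eq_rescale_pair}, the choice of $v$, and the twist factors $\nu^{i-1}$ at position $2$ all match on the nose. The tool I would use first is the explicit dictionary \eqref{eq_nu_standard_parameters} together with the recomputation \eqref{eq_standard_negative_actions}, rescaled by $\nu$.

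\textbf{Step 2: $\Phi_n$ is a graded isomorphism.} Each $\Phi_n$ is a unital map sending homogeneous generators to homogeneous generators of the same degree $\mp2$, so it is graded. Its source $\wNC_n\cong\Mat_{n!}(\kk)$ (Proposition~\ref{prop_rho_iso}) is simple, so the nonzero map $\Phi_n$ is injective. Both sides have dimension $(n!)^2$, hence $\Phi_n$ is an isomorphism. In particular the local generators generate $A_n$.

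\textbf{Step 3: monoidal compatibility.} Both $\Phi_{n+m}\circ\psi^\nu_{n,m}$ and $\psi_{n,m}\circ(\Phi_n\otimes\Phi_m)$ are algebra maps $\wNC_n\otimes\wNC_m\to A_{n+m}$. It therefore suffices to compare them on $g\otimes1$ and $1\otimes g$ for $g$ a local generator. On $g\otimes 1$, both give the local generator at the same position of $A_{n+m}$. On $1\otimes\partial_j^{(\nu)}$, the map $\psi^\nu$ produces $\partial_{n+j}^{(\nu)}$, which $\Phi_{n+m}$ sends to $\partial_{n+j}$. On the other side, $\psi_{n,m}(1\otimes\partial_j)=\partial_{n+j}$ by associativity \eqref{eq_assoc_relations}. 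So the two maps agree on generators.

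\textbf{Conclusion.} The $\Phi_n$ assemble into a strict graded monoidal functor $\mcNC_\nu'\to\mcC'$ that sends $\mcE_0\mapsto X$ and is bijective on objects and on all Hom spaces. It is therefore the required isomorphism $\mcC'\cong\mcNC_\nu'$ preserving the chosen generator.
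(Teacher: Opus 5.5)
Your proposal is correct and follows essentially the same route as the paper's proof. Both arguments use the same three steps: first, show that the local generators of $A_n$ satisfy the $\nu$-twisted presentation from Proposition~\ref{prop_wNC_def_rel}, using $A_2$ for same-index relations, the $\nu$-standard $A_3$ for adjacent-index relations, and the interchange law for far commutation; second, conclude that $\Phi_n$ is an isomorphism because $\wNC_n$ is simple and both algebras have dimension $(n!)^2$; third, check compatibility with the tensor-product maps on local generators. The only difference is that you spell out two points the paper states in one line: transporting the three-strand relations from positions $1,2,3$ via associativity, and deducing the $A_3$ relations from equality of matrices on $W_3$ together with faithfulness.
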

 
\begin{proof}
For each $n$, let $\partial_i,\partial_i^-\in A_n$ be the local elements coming
from $A_2$. Consider the presentation of $\wNC_n$ in
Proposition~\ref{prop_wNC_def_rel}, rewritten using the rescaling
\[
\partial_i\longmapsto\nu^{i-1}\partial_i,
\qquad
\partial_i^-\longmapsto\nu^{1-i}\partial_i^-.
\]
We call this the $\nu$-twisted presentation. The distinguished local elements
of $A_n$ satisfy all relations in this presentation:
\begin{itemize}
\item the same-index relations come from the embedded copy of $A_2$;
\item every relation involving adjacent indices $i,i+1$ comes from the
corresponding embedded copy of $A_3$, which is $\nu$-standard;
\item every far-commutativity relation follows from the monoidal interchange
law, since the two morphisms live on disjoint tensor factors.
\end{itemize}
Hence Proposition~\ref{prop_wNC_def_rel} gives a unital graded algebra
homomorphism
\begin{equation}\label{eq_phi_n_rigidity}
\Phi_n:\wNC_n\longrightarrow A_n
\end{equation}
which sends the local generators of $\mcNC_\nu'$ to the local generators of
$\mcC'$.

By Proposition~\ref{prop_rho_iso},
$\wNC_n\cong\Mat_{n!}(\kk)$, so $\wNC_n$ is simple. Therefore, the unital map
\eqref{eq_phi_n_rigidity} is injective. Both algebras have dimension
$(n!)^2$, hence $\Phi_n$ is an isomorphism. 

It remains to check compatibility with the tensor product maps \eqref{eq_nm_homs}. 
For a local generator in the second tensor factor,
\eqref{eq_twisted_tensor_map} and
\eqref{eq_twisted_local_generators} give
\[
\psi_{n,m}^{\nu}
\bigl(1\otimes\partial_j^{(\nu)}\bigr)
=\partial_{n+j}^{(\nu)},
\qquad
\psi_{n,m}^{\nu}
\bigl(1\otimes(\partial_j^-)^{(\nu)}\bigr)
=(\partial_{n+j}^-)^{(\nu)}.
\]
The corresponding statement for generators in the first tensor factor is
immediate. Since the local generators generate the endomorphism algebras,
the isomorphisms $\Phi_n$ commute with all tensor-product maps. They
therefore assemble to the desired strict graded monoidal isomorphism
preserving the chosen generator.
\end{proof}

\begin{remark}\label{rmk_rigidity_assumptions}
Once the $\nu$-standard three-strand mixed structure is imposed, no assumption
of the form $A_4^+v=W_4$, $A_n^+v=W_n$ for all $n$, or
$A_3^-v_-=W_3$ is needed, and local generation of $A_n$ need not be assumed;
all of these properties follow from Theorem~\ref{thm_three_strand_rigidity}.
\end{remark}

\begin{remark}\label{rmk_nu_invariant}
The scalar $\nu$ is invariant under  graded monoidal isomorphism.
Indeed, a graded automorphism of $A_2$ can only rescale $\partial$ and
$\partial^-$ inversely as in \eqref{eq_rescale_pair}. Both copies of
$\partial$ in $A_3$ are therefore rescaled by the same scalar, leaving the
braid scalar in \eqref{eq_nu} unchanged. 
\end{remark}

\subsection{A nilHecke-realizable uniqueness criterion}

There is also a conceptual sufficient condition for uniqueness which uses the
$\slt$-action on the nilHecke algebra discussed in
Section~\ref{subsec_subalgebra}.

\begin{thm}[NilHecke-realizable uniqueness]\label{thm_nilhecke_realizable}
Let $(\mcC',X)$ be a graded minimal categorification over a field $\kk$ of characteristic $0$.
Suppose the algebras $A_n$ are equipped with compatible $\slt$-actions and
there is a faithful graded  monoidal functor
\[
F:\mcC'\longrightarrow\mcNH'
\]
taking $X$ to $\mcE$, such that every induced map
\[
F_n:A_n\longrightarrow\NH_n
\]
is $\slt$-equivariant. Then
\[
\mcC'\cong\mcNC'
\]
as a graded monoidal category.
\end{thm}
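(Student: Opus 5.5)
The plan is to show that each $F_n$ has image exactly $\wNC_n$, and then to check that the tensor product maps agree. Since $F$ is faithful, $F_n:A_n\to\NH_n$ is injective. By hypothesis it is $\slt$-equivariant, where $A_n\cong\Mat_{[n]!}(\kk)$ is finite-dimensional. So $F_n(A_n)$ is a finite-dimensional $\slt$-subrepresentation of $\NH_n$. It is therefore contained in the maximal finite-dimensional subrepresentation $\NH_n^{\mathsf{fd}}$. By Corollary~\ref{cor_four_equal}, $\NH_n^{\mathsf{fd}}=\wNC_n$. Both $A_n$ and $\wNC_n$ have dimension $(n!)^2$, so $F_n$ is a graded algebra isomorphism
\[
F_n:A_n\stackrel{\sim}{\longrightarrow}\wNC_n .
\]

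Next I would check compatibility with the monoidal structure. Because $F$ is a monoidal functor taking $X$ to $\mcE$, we have
\[
F_{n+m}\circ\psi_{n,m}=\psi^{\NH}_{n,m}\circ(F_n\otimes F_m),
\]
where $\psi^{\NH}_{n,m}$ is the standard inclusion $\NH_n\otimes\NH_m\to\NH_{n+m}$. This inclusion restricts on $\wNC_n\otimes\wNC_m$ to the inclusion \eqref{eq_nat_inclusions}, which is the tensor product map of $\mcNC'$ (the case $\nu=1$). The isomorphisms $F_n$ therefore intertwine the systems $(A_n,\psi_{n,m})$ and $(\wNC_n,\psi^1_{n,m})$. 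They assemble into a graded monoidal isomorphism $\mcC'\cong\mcNC'$ sending $X$ to $\mcE_0$. Concretely, the isomorphism is given by corestricting $F$ to its image, which is precisely the subcategory $\mcNC'\subset\mcNH'$.

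The main obstacle is being precise about what "faithful graded monoidal functor" means. If $F$ is only monoidal up to coherent isomorphism, then $F(X^{\otimes n})\cong\mcE^{\otimes n}$ via structure maps. In that case $F_n$ is defined only after conjugating by these isomorphisms, which are invertible elements of $\NH_n$. Such conjugation need not preserve $\slt$-equivariance or the subalgebra $\wNC_n$. I would handle this by taking the stated hypothesis literally: $F_n$ is the map $A_n\to\NH_n$ obtained after the identifications, and it is assumed equivariant. The argument above uses only this and never needs $\partial_i$ to map to $\partial_i$.

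If one wants the local generators matched as well, that is an extra consistency check. The degree $-2$ and $+2$ components of $\wNC_2$ are spanned by $\partial_1$ and $\partial_1^-$, so $F_2(\partial)=t\partial_1$ and $F_2(\partial^-)=t^{-1}\partial_1^-$, up to the rescaling \eqref{eq_rescale_pair}. Monoidality then places these in all positions, with no twist, so $\nu=1$. This is consistent with Theorem~\ref{thm_three_strand_rigidity}, although the direct argument does not need it. I would also note that characteristic $0$ is used in invoking Corollary~\ref{cor_three_coincide}, through \cite[Theorem~1.9]{EQ23}.
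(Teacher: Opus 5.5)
Your proposal is correct and follows the paper's proof. The steps match: faithfulness makes $F_n$ injective, equivariance puts the finite-dimensional image inside $\NH_n^{\mathsf{fd}}=\wNC_n$, the dimension count $(n!)^2$ forces $F_n(A_n)=\wNC_n$, and monoidality of $F$ identifies the tensor-product maps with the standard inclusions. Your extra remarks on non-strict monoidal functors and on matching the local generators (which forces $\nu=1$) are consistent refinements that the paper leaves implicit.
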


\begin{proof}
The image $F_n(A_n)$ is finite-dimensional and $\slt$-stable. Hence it lies in
the maximal finite-dimensional $\slt$-subrepresentation of $\NH_n$. By
Corollary~\ref{cor_three_coincide},
\[
F_n(A_n)\subseteq\NH_n^{\mathsf{fd}}=\wNC_n.
\]
Since $F$ is faithful, $\dim F_n(A_n)=(n!)^2$, while
$\dim\wNC_n=(n!)^2$. Therefore $F_n(A_n)=\wNC_n$ for every $n$. Monoidality of
$F$ identifies the tensor-product maps with the standard inclusions
\eqref{eq_nat_inclusions}, giving the asserted isomorphism.
\end{proof}

\begin{remark}
Theorem~\ref{thm_nilhecke_realizable} uses the realization inside the nilHecke
category in an essential way. We do not claim that the existence of an
abstract $\slt$-action on a minimal categorification, without such an
equivariant realization, implies uniqueness.
\end{remark}

%
%

\section{Minimal categorification in the odd case}\label{section-odd}

The odd nilHecke algebra $\ONH_n$, its action on skew polynomials, and its role in odd categorification have been studied in~\cite{EKL12,EllisLauda16,EllisQi16,ragavender2017odd}.  In this section we construct a semisimple finite-dimensional sub-superalgebra $\wONC_n\subset \ONH_n$, analogous to $\wNC_n\subset\NH_n$, with
\[
\wONC_n\cong \End_{\kk}(V_n).
\]
We then explain the resulting minimal monoidal supercategorification and record a natural action of the Lie superalgebra $\mathfrak{pgl}(1|1)$ on the odd nil-Hecke tower which preserves $\wONC_n$.

The ring of skew polynomials
\[\SPol_n = \kk\langle x_1,\ldots,x_n \rangle / \langle x_i x_j + x_j x_i = 0 \text{ for } i\neq j \rangle\]
is a $\Z \times \Z/2$-graded superalgebra with $\Z$-grading $|\cdot|$ and $\Z/2$-grading $p(\cdot)$, determined by $|x_i|=2$ and $p(x_i)=1$. 
For a homogeneous element $f$, we then have $p(f)\equiv |f|/2\pmod 2$. Following~\cite[Section~2.1]{EKL12}, let $S_n$ act on $\SPol_n$ by the signed permutation action
\[s_i(x_j)=\begin{cases}
    -x_{i+1} & j=i,\\
    -x_i & j=i+1,\\
    -x_j & \te{else}
\end{cases}.\]

The odd divided difference operator $\partial_i$ is characterized by
\[
\partial_i(x_j)=
\begin{cases}
1,&j=i,i+1,\\
0,&j\neq i,i+1,
\end{cases}
\]
and the Leibniz rule
\begin{equation}\label{eq_odd_Leibniz}
\partial_i(fg)=\partial_i(f)g+s_i(f)\partial_i(g).
\end{equation}
Note that the ``odd twist'' in this Leibniz rule is hidden in the signed action of the symmetric group. In particular, $|\partial_i|=-2$ and $p(\partial_i)=1$. The operator $\pd_i$ also has the explicit formula 
\[\partial_i(f)=\frac{(x_{i+1}-x_i)f-s_i(f)(x_{i+1}-x_i)}{x_{i+1}^2-x_i^2}.\]

The odd nilHecke algebra $\ONH_n$ is defined as the sub-superalgebra of
$\End_{\kk}(\SPol_n)$ generated by multiplications by the $x_i$'s and by the
$\partial_i$'s.   The generators have degrees $|x_i|=2$, $|\partial_i|=-2$, $p(x_i)=p(\partial_i)=1$.

 A set of defining relations for $\ONH_n$ is given by 
\begin{equation}\label{eq_nilHecke_def_odd}
 \begin{array}{ll}
   \partial_i x_j + x_j\partial_i=0 \quad \text{if $j\neq i,i+1$}, &
   \partial_i\partial_j + \partial_j\partial_i =0 \quad \text{if $|i-j|>1$}, \\
  \partial_i^2 = 0,  &
   \partial_i\partial_{i+1}\partial_i = \partial_{i+1}\partial_i\partial_{i+1},  \\
   x_i \partial_i + \partial_i x_{i+1}=1,  &   \partial_i x_i + x_{i+1} \partial_i =1, \\
   x_i x_j +   x_j x_i  = 0 \quad \text{if $i\not= j$}. &  
  \end{array}
\end{equation}
These are the relations of~\cite[Proposition~2.1]{EKL12}.

The odd symmetric polynomials form the sub-superalgebra
\[
\OLbd_n\coloneqq \bigcap_{i=1}^{n-1}\ker(\partial_i)\subset \SPol_n.
\]
If $h\in\OLbd_n$, then~\eqref{eq_odd_Leibniz} gives 
$\partial_i(fh)=\partial_i(f)h$, so the action of $\ONH_n$ on $\SPol_n$ is right $\OLbd_n$-linear.  We will use the results of~\cite[Proposition~2.13 and Corollary~2.14]{EKL12} that $\SPol_n$ is free as a right $\OLbd_n$-module and that
\begin{equation}\label{eq_odd_matrix_over_sym}
\ONH_n\cong \End_{\OLbd_n}(\SPol_n).
\end{equation}

Set
\begin{equation}\label{eq_undxn} 
\underline{x}_n=x_1^{n-1}x_2^{n-2}\cdots x_{n-1}
\end{equation}
and consider the localization
\[
\SPol_n^{\circ}=\kk\langle x_1^{\pm1},\ldots,x_n^{\pm1}\rangle/
\langle x_i^{\epsilon}x_j^{\epsilon'}+x_j^{\epsilon'}x_i^{\epsilon}=0
\text{ for }i\neq j,\ \epsilon,\epsilon'\in\{\pm1\}\rangle.
\]
Let $\SPol_n^-$ be the subalgebra of $\SPol_n^{\circ}$ generated by $x_1^{-1},\ldots,x_n^{-1}$, let $\SPol_n^+$ be the subalgebra generated by $x_1,\ldots,x_n$ which is equal to $\SPol_n$, and let
$\psi$ be the algebra involution of $\SPol_n^{\circ}$ determined by
$\psi(x_i)=x_i^{-1}$. 
We also set $\opartial_i = \psi \partial_i \psi$, so that

\[
\overline{\partial}_i(f)=\frac{(x_{i+1}^{-1}-x_i^{-1})f-s_i(f)(x_{i+1}^{-1}-x_i^{-1})}{x_{i+1}^{-2}-x_i^{-2}}.
\]

\begin{remark}
It is not true that $\ol\pd_i=\pm x_ix_{i+1}\pd_i$, which holds in the even case \eqref{eq_opart_t}. However, the odd analogue of the formula $\pd_i^-=\tau \pd_i\tau$ can still be derived, see \eqref{eq_odd_dminus_def} and Proposition~\ref{prop_odd_conjugation} below. Note that $\overline{\partial}_i$ does not belong to $\ONH_n$ -- for instance, one can compute that $\ol\pd_1(x_1)=-x_1x_2$, $\ol\pd_1(x_2)=x_1x_2$, and $\ol\pd_1(1)=0$, so $\ol\pd_1$ should be a degree 2 map. Yet degree 2 elements in $\ONH$ have a basis of  $x_1,x_2,x_1^2\pd_1,x_1x_2\pd_1,x_2^2\pd_1$; no linear combination of these elements agrees with $\ol\pd_1$. 
\end{remark}

Define the negative odd divided difference operator (of degree $2$ and odd parity) 
\begin{equation}\label{eq_odd_dminus_def}
\partial^-_i \coloneqq x_{i}\partial_ix_{i+1}.
\end{equation}
We define $\wONC_n$ to be the algebra generated by $\partial_i, \partial^-_i$ for $i=1,\ldots,n-1$.

Define $\tau\colon \SPol_n^\circ\lto\SPol_n^\circ$ via $f \lmto \psi(f)\underline{x}_n$. We have 
\[\tau^2(f) = \psi(\psi(f)\underline{x}_n))\underline{x}_n=f\psi(\underline{x}_n)\underline{x}_n=(-1)^{\binom{\lfloor{\frac{n}{2}\rfloor}}{2}}f,\]
where $\ul x_n$ is given by~\eqref{eq_undxn}. Therefore $\tau^{-1}=(-1)^{\binom{\lfloor{\frac{n}{2}\rfloor}}{2}}\tau$.

\begin{remark}
    In the above, we have $\tau^2=(-1)^{\binom{\lfloor n/2\rfloor}{2}}\id$, because $\psi\pr{\ul x_n}\ul x_n=(-1)^{\binom{\lfloor n/2\rfloor}{2}}$; for this to be true, we want $\psi$ to be an involution.
\end{remark} 

\begin{prop}\label{prop_odd_conjugation}
On $\SPol_n^{\circ}$ we have
\begin{equation}\label{eq_odd_conjugation_revised}
\partial_i^-=\tau^{-1}\partial_i\tau.
\end{equation}
\end{prop}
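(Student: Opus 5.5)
The identity \eqref{eq_odd_conjugation_revised} is equivalent to $\tau\,\partial_i^-=\partial_i\,\tau$ on $\SPol_n^{\circ}$, since $\tau^{-1}$ is a scalar multiple of $\tau$. Unwinding the definitions \eqref{eq_odd_dminus_def} and $\tau(f)=\psi(f)\underline{x}_n$, this means
\[
\psi\bigl(x_i\,\partial_i(x_{i+1}f)\bigr)\,\underline{x}_n\;=\;\partial_i\bigl(\psi(f)\,\underline{x}_n\bigr),\qquad f\in\SPol_n^{\circ}.
\]
Both sides are $\kk$-linear in $f$, so I would verify this on Laurent monomials $f$, following the even proof of Proposition~\ref{prop_partial_minus}: isolate the two variables $x_i,x_{i+1}$ and reduce to a two-variable computation.

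\textbf{Step 1: reduction to two variables.} I would use these facts.
\begin{itemize}
\item For $j\neq i,i+1$, $x_j^{\pm1}$ anticommutes with $\partial_i$, by the first relation in \eqref{eq_nilHecke_def_odd} extended to the localization. It also anticommutes with $x_i^{\pm1},x_{i+1}^{\pm1}$, and $\psi$ only inverts it.
\item The element $z=x_ix_{i+1}$ is odd symmetric for $\partial_i$: $\partial_i(z)=x_{i+1}+s_i(x_i)=0$. Hence \eqref{eq_odd_Leibniz} gives $\partial_i(gz)=\partial_i(g)z$ and $\partial_i(zg)=s_i(z)\partial_i(g)=-z\,\partial_i(g)$. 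The same holds for $z^{-1}$, and $\psi(z)=x_i^{-1}x_{i+1}^{-1}=-z^{-1}$.
\end{itemize}
Write $\underline{x}_n=P\,x_i^{k+1}x_{i+1}^{k}\,Q$ with $k=n-i-1$, where $P$ and $Q$ are monomials in the variables with indices $<i$ and $>i+1$ respectively. Then $x_i^{k+1}x_{i+1}^k=\pm x_i z^k$. Using the two facts, every factor of $f$ and of $\underline{x}_n$ not involving $x_i,x_{i+1}$, and every power of $z$, can be moved outside $\partial_i$ on both sides at the cost of explicit signs. What remains is the two-variable identity
\[
\psi\bigl(x\,\partial(y\,g)\bigr)\,x=\pm\,\partial\bigl(\psi(g)\,x\bigr),\qquad g=x^ay^b,\ a,b\in\Z,
\]
with $x=x_i$, $y=x_{i+1}$, $\partial=\partial_i$. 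This is the odd analogue of the computation $\partial_1^-(\psi(f)x_1)=\psi(\partial_1 f)x_1$ in the proof of Proposition~\ref{prop_partial_minus}. Using $z$-periodicity once more, one may even assume $b=0$ or $a=0$.

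\textbf{Step 2: the two-variable computation.} I would use the closed formulas for $\partial(x^a)$ and $\partial(y^b)$ in $\SPol_2$. These split by the parity of the exponent: for example $\partial(x^2)=0$, and odd powers give $x^{2m}$-type sums. I would extend them to negative exponents via $x^{-1}=-y\,z^{-1}$, and then compare both sides directly. A cleaner alternative is to check the identity on $g=1,x,y,x^{-1},y^{-1}$ only. One would then show that both operators $g\mapsto\psi(x\partial(yg))x$ and $g\mapsto\partial(\psi(g)x)$ obey the same twisted Leibniz rule with respect to right multiplication by $x^{\pm1},y^{\pm1}$; this follows from \eqref{eq_odd_Leibniz} together with $\psi s_i=s_i\psi$.

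\textbf{Main obstacle.} I expect the main obstacle to be the sign bookkeeping. One must track the signs from anticommuting the outside factors $P,Q$ past $\partial_i$ and $\psi(f)$, the sign of $\psi(z)=-z^{-1}$, and the signed action $s_i(x_j)=-x_j$; these must match the global sign absorbed into $\tau^{-1}=(-1)^{\binom{\lfloor n/2\rfloor}{2}}\tau$. To control this, I would first do the cases $n=2$ and $n=3$ completely explicitly. Then I would organize the general computation so that each sign is attached to a parity: the parity of $f$, of $P$, and of $k$. The aim is to see that these cancel uniformly in $i$, leaving exactly the factor relating $\tau^{-1}$ and $\tau$.
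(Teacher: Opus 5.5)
Your reduction is sound and gives a valid proof, but by a different route from the paper's. The paper never separates variables. From the rational formula for $\partial_i$ it derives $\overline{\partial}_i(f)=x_if-x_i^2\partial_i(f)-s_i(f)x_i$. It also uses \eqref{eq_odd_Leibniz} to get $\partial_i^-(f)=x_if-x_i^2\partial_i(f)$, so $\partial_i^-(f)=\overline{\partial}_i(f)+s_i(f)x_i$. One application of \eqref{eq_odd_Leibniz} to $\partial_i(\psi(f)\underline{x}_n)$, using $\psi s_i=s_i\psi$, then yields $\tau^{-1}\partial_i\tau(f)=\overline{\partial}_i(f)+s_i(f)\psi(\partial_i\underline{x}_n)\psi(\underline{x}_n)^{-1}$. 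Everything therefore rests on the single evaluation $\partial_i(\underline{x}_n)=x_i^{-1}\underline{x}_n$.

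Your argument uses only the Leibniz rule and never divides by $x_{i+1}^2-x_i^2$, but it pays for this with the sign bookkeeping of Step 1. Your two-variable base case $\partial(x)=1=x^{-1}x$ is just the $n=2$ instance of the paper's evaluation. The recursion version of Step 2 does work. For $L(g)=\psi(x\partial(yg))x$ and $R(g)=\partial(\psi(g)x)$, one checks that both $F=L$ and $F=R$ satisfy
\[
F(gx)=F(g)x^{-1}-s_i(\psi(g))x^{-1},\qquad F(gy)=-F(g)y^{-1}-s_i(\psi(g))x^{-1},
\]
\[
F(gx^{-1})=F(g)x-s_i(\psi(g))y,\qquad F(gy^{-1})=-F(g)y+s_i(\psi(g))y.
\]
So the single check $L(1)=R(1)=1$ suffices; the other base cases you list are not needed.

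Two points in your write-up should be corrected.

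First, $\partial(x^2)\neq 0$. By \eqref{eq_odd_Leibniz}, $\partial_i(x_i^2)=x_i-x_{i+1}$ and $\partial_i(x_i^3)=x_i^2+x_ix_{i+1}+x_{i+1}^2$. Likewise $x^{-1}=yz^{-1}=-z^{-1}y$, not $-yz^{-1}$. So the closed-formula variant of Step 2, as sketched, would go wrong; use the recursion instead.

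Second, the goal in your last sentence contradicts your own reformulation. For $\tau\partial_i^-=\partial_i\tau$ no global factor may survive, and none does. Write $f=R\,g$ with $g=x_i^ax_{i+1}^b$ and $R$ free of $x_i,x_{i+1}$.
\begin{itemize}
\item The left side acquires the sign $(-1)^{p(R)+p(P)(p(g)+1)}$; the extra $p(P)$ appears because $\partial_i^-$ is odd.
\item The right side acquires $(-1)^{p(R)+p(P)+p(P)p(g)}$, from $s_i(\psi(R)P)$ and from moving $P$ past $\psi(g)$.
\item $Q$ and $z^k$ leave $\partial_i$ on the right without any sign.
\end{itemize}
These signs agree, so the reduction lands exactly on the $n=2$ identity $\psi(x\partial(yg))x=\partial(\psi(g)x)$ with a plus sign. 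A leftover factor $(-1)^{\binom{\lfloor n/2\rfloor}{2}}$ would mean you had proved $\partial_i^-=\tau\partial_i\tau$ instead. That differs from the statement by exactly that factor, which is already nontrivial for $n=4$.
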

\begin{proof}
    Note that
    \begin{align*}
        \ol\pd_i(f)&=\frac{(x_{i+1}^{-1}-x_i^{-1})f-s_i(f)(x_{i+1}^{-1}-x_i^{-1})}{x_{i+1}^{-2}-x_i^{-2}}\\
        &=\frac{(x_ix_{i+1}^2-x_i^2x_{i+1})f-s_i(f)(x_ix_{i+1}^2-x_i^2x_{i+1})}{x_{i+1}^2-x_i^2}\\
        &=\frac{(x_{i+1}^2-x_i^2)x_if-x_i^2((x_{i+1}-x_i)f-s_i(f)(x_{i+1}-x_i))-(x_{i+1}^2-x_i^2)s_i(f)x_i}{x_{i+1}^2-x_i^2}\\
        &=\frac{(x_{i+1}^2-x_i^2)x_if-(x_{i+1}^2-x_i^2)x_i^2\pd_i(f)-(x_{i+1}^2-x_i^2)s_i(f)x_i}{x_{i+1}^2-x_i^2}\\
        &=x_if-x_i^2\pd_i f-s_i(f)x_i.
    \end{align*}
    Note $\pd_i^-(f)=x_i\pd_i(x_{i+1}f)=x_i(f-x_i\pd_i(f))=x_if-x_i^2\pd_i(f)$, so that
    \[\pd_i^-(f)=\ol\pd_i(f)+s_i(f)x_i.\]
    On the other hand, as $\tau^{-1}(f)=\psi(f)\psi(\ul x_n)^{-1}$, we have
    \begin{align*}
        \tau^{-1}\pd_i\tau(f)&=\psi(\pd_i(\psi(f)\ul x_n))\psi(\ul x_n)^{-1}\\
        &=\ol\pd_i(f)+s_i(f)\psi(\pd_i \ul x_n)\psi(\ul x_n)^{-1}\\
        &=\ol\pd_i(f)+s_i(f)x_i;
    \end{align*}
    Hence $\pd_i^-=\tau^{-1}\pd_i\tau$, as claimed.
\end{proof}

Define
\[V_n\coloneqq \SPol_n^+\cap \SPol_n^- \ul x_n\subset\SPol_n^{\circ}.\] 
\begin{prop}\label{prop_odd_tau_end}
    $\tau$ restricts to an endomorphism on $V_n$.
\end{prop}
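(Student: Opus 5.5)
The plan is to mimic the proof of Proposition~\ref{prop_involution_endomorphism}, but to argue with the definition $V_n=\SPol_n^+\cap\SPol_n^-\underline{x}_n$ rather than with an explicit basis. First, $\tau$ maps $\SPol_n^\circ$ to itself bijectively, with $\tau^{-1}=\pm\tau$. So it suffices to show $\tau(V_n)\subset V_n$, and for that to check $\tau(\SPol_n^+)\subset \SPol_n^-\underline{x}_n$ and $\tau(\SPol_n^-\underline{x}_n)\subset\SPol_n^+$.

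The first inclusion is immediate. Since $\psi$ is an algebra involution exchanging $\SPol_n^+$ and $\SPol_n^-$, for $f\in\SPol_n^+$ we get $\tau(f)=\psi(f)\underline{x}_n\in\SPol_n^-\underline{x}_n$.

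For the second, take $f=g\underline{x}_n$ with $g\in\SPol_n^-$. Then
\[
\tau(f)=\psi(g)\,\psi(\underline{x}_n)\,\underline{x}_n=(-1)^{\binom{\lfloor n/2\rfloor}{2}}\psi(g),
\]
using the identity $\psi(\underline{x}_n)\underline{x}_n=(-1)^{\binom{\lfloor n/2\rfloor}{2}}$ recorded in the remark preceding Proposition~\ref{prop_odd_conjugation}. Since $\psi(g)\in\SPol_n^+$, this gives $\tau(f)\in\SPol_n^+$. Combining the two inclusions, $\tau(V_n)\subset V_n$.

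The only point needing care is that sign identity. To verify it, I would note that $\psi(\underline{x}_n)=x_1^{-(n-1)}\cdots x_{n-1}^{-1}$ and reorder $\psi(\underline{x}_n)\underline{x}_n$ so that each $x_i^{-a_i}$ meets $x_i^{a_i}$. Moving $x_j^{-a_j}$ past $x_i^{a_i}$ for $i\neq j$ introduces the sign $(-1)^{a_ia_j}$, so the total sign is a product of such factors over the relevant pairs. The result is a sign times $1$, so no extra work is needed beyond citing the remark.

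As a sanity check and an explicit description, I would also observe that monomials $x_1^{a_1}\cdots x_{n}^{a_n}$ with $0\le a_i\le n-i$ span $V_n$ up to sign, since anticommutation only changes signs of monomials. On such a monomial, $\tau$ acts as $\pm x_1^{n-1-a_1}\cdots x_{n-1}^{1-a_{n-1}}$, exactly as in the even case.
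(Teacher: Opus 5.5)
Your proof is correct, but it takes a different route from the paper's.

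The paper uses the explicit monomial basis $\{x_1^{a_1}\cdots x_{n-1}^{a_{n-1}} : 0\le a_i\le n-i\}$ of $V_n$, which it asserts ``for the same reasons as in the even case.'' It then checks that $\tau$ sends each basis monomial to $\pm x_1^{n-1-a_1}\cdots x_{n-1}^{1-a_{n-1}}$, whose exponents stay in range.

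You never use a basis of $V_n$. You show that $\tau$ maps $\SPol_n^+$ into $\SPol_n^-\underline{x}_n$ and maps $\SPol_n^-\underline{x}_n$ into $\SPol_n^+$. For this you need only two facts: $\psi$ is an algebra automorphism exchanging $\SPol_n^+$ and $\SPol_n^-$, and $\psi(\underline{x}_n)\underline{x}_n$ is a nonzero scalar. So $\tau$ preserves the intersection.

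What your approach buys:
\begin{itemize}
\item It does not depend on identifying the intersection with a span of monomials, which implicitly needs the signed Laurent monomials to form a basis of $\SPol_n^\circ$.
\item It applies verbatim to the even case (Proposition~\ref{prop_involution_endomorphism}).
\item Combined with $\tau^2=\pm\mathrm{id}$, it shows directly that the restriction is bijective. This is what is used later to conclude that the negative odd Schubert polynomials form a basis of $V_n$.
\item Only the fact $\psi(\underline{x}_n)\underline{x}_n\in\{\pm1\}$ enters, not the precise exponent $\binom{\lfloor n/2\rfloor}{2}$.
\end{itemize}

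The paper's computation, in turn, gives the explicit action of $\tau$ as a signed permutation of the monomial basis of $V_n$. Your final ``sanity check'' paragraph essentially reproduces it.
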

\begin{proof}
    For the same reasons as in the even case, $V_n$ has a basis given by
    \[\{x_1^{a_1}\cdots x_{n-1}^{a_{n-1}}:0\le a_i\le n-i\}.\] 
    Then we have
    \[\tau(x_1^{a_1}\cdots x_{n-1}^{a_{n-1}})=x_1^{-a_1}\cdots x_{n-1}^{-a_{n-1}}\cdot x_1^{n-1}\cdots x_{n-1}^1=\pm x_1^{n-1-a_1}x_2^{n-2-a_2}\cdots x_{n-1}^{1-a_{n-1}},\] 
    where the sign is determined by the odd commutation rule. Again, $n-i\ge n-i-a_i\ge 0$, so this is still in $V_n$. 
\end{proof}

Let $\ONC_n^\pm$ denote the subalgebra of endomorphisms of $\SPol_n^\circ$ generated by $\pd_1^\pm,\cdotsc,\pd_{n-1}^\pm$. Since we have seen that $\pd_i^-=\tau^{-1}\pd_i \tau$, we obtain the following:
\begin{cor}\label{cor_odd_plus_minus_iso}
    There is a natural isomorphism $\ONC_n^+\simlto \ONC_n^-$ taking $\pd_i$ to $\pd_i^-$, and the latter satisfy the relations 
    \begin{align*}
        \pd_i^-\pd_i^-&=0,\\
        \pd_i^-\pd_{i+1}^-\pd_i^-&=\pd_{i+1}^-\pd_i^-\pd_{i+1}^-,\\
        \pd_i^-\pd_j^-&=-\pd_j^-\pd_i^-\quad\te{for }|i-j|>1.
    \end{align*}
\end{cor}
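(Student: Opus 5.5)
The plan is to transport the odd nilCoxeter relations from $\partial_i$ to $\partial_i^-$ by conjugation, using Proposition~\ref{prop_odd_conjugation}: on $\SPol_n^{\circ}$ we have $\partial_i^-=\tau^{-1}\partial_i\tau$, where $\tau$ is a linear automorphism of $\SPol_n^{\circ}$ (invertible, with $\tau^{-1}=\pm\tau$).

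First, I would record that $\partial_i$, viewed as an operator on the localization $\SPol_n^{\circ}$, satisfies the odd nilCoxeter relations $\partial_i^2=0$, $\partial_i\partial_{i+1}\partial_i=\partial_{i+1}\partial_i\partial_{i+1}$ and $\partial_i\partial_j=-\partial_j\partial_i$ for $|i-j|>1$. These hold on $\SPol_n$ by \eqref{eq_nilHecke_def_odd}. To pass to $\SPol_n^{\circ}$, note that every element of $\SPol_n^{\circ}$ has the form $f\,(x_1\cdots x_n)^{-2N}$ with $f\in\SPol_n$ and $N$ large. Here $h=(x_1\cdots x_n)^{2}$ is central in $\SPol_n^{\circ}$ and fixed by every $s_i$, since the signs cancel in even total degree. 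The Leibniz rule \eqref{eq_odd_Leibniz}, applied to $h\cdot h^{-1}=1$, gives $\partial_i(h^{-1})=0$ and hence $\partial_i(fh^{-N})=\partial_i(f)h^{-N}$. So each $\partial_i$ commutes with right multiplication by $h^{-N}$, and any relation among compositions of $\partial_i$'s that holds on $\SPol_n$ also holds on $\SPol_n^{\circ}$. I would also check that $\partial_i$ extends to $\SPol_n^{\circ}$ through the explicit formula, consistently with how Proposition~\ref{prop_odd_conjugation} already uses it.

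Next comes the conjugation step. The map $a\mapsto \tau^{-1}a\tau$ is an algebra automorphism of $\End_{\kk}(\SPol_n^{\circ})$, so it sends $\partial_i$ to $\partial_i^-$ and carries the three relations over verbatim, signs included. It restricts to an algebra isomorphism $\ONC_n^+\to\ONC_n^-$ that takes generators to generators, and its inverse is conjugation by $\tau$. Naturality, meaning compatibility with the generators, is automatic.

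The step I expect to be the only real subtlety is the extension of $\partial_i$ to $\SPol_n^{\circ}$, together with its invariance under multiplication by the even central element. Once that is settled, the corollary is formal.
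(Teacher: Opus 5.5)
Your proposal is correct and takes the same route as the paper, which deduces the corollary directly from Proposition~\ref{prop_odd_conjugation}, $\partial_i^-=\tau^{-1}\partial_i\tau$, by transporting the odd nilCoxeter relations through conjugation by the invertible map $\tau$. Your additional check is sound and fills in a detail the paper leaves implicit: you show the relations for $\partial_i$ persist on $\SPol_n^{\circ}$ using the central, $s_i$-invariant element $(x_1\cdots x_n)^2$, a sensible adaptation of the even-case argument since $x_1\cdots x_n$ is neither central nor $s_i$-invariant when $n$ is even.
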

\begin{lem}\label{lem_odd_V_stable}
    $V_n$ is stable under the action of $\pd_i$ and $\pd_i^-$. 
\end{lem}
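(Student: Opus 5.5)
Since $V_n=\SPol_n^+\cap \SPol_n^-\underline{x}_n$ is an intersection, I will show that each of $\pd_i$ and $\pd_i^-$ preserves both $\SPol_n^+$ and $\SPol_n^-\underline{x}_n$ separately. Stability of the intersection then follows at once.

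For $\SPol_n^+=\SPol_n$ the argument is immediate from the definitions. The operator $\pd_i$ is an element of $\ONH_n$, which acts on $\SPol_n$. By \eqref{eq_odd_dminus_def}, $\pd_i^-=x_i\pd_ix_{i+1}$ is a composite of multiplications by $x_i,x_{i+1}$ with $\pd_i$, so it also lies in $\ONH_n$ and preserves $\SPol_n$.

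For $\SPol_n^-\underline{x}_n$ I will transport the first step via $\tau$, using Proposition~\ref{prop_odd_conjugation}.

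\begin{enumerate}
\item By definition $\tau(f)=\psi(f)\underline{x}_n$. Since $\psi$ exchanges $\SPol_n^+$ and $\SPol_n^-$, $\tau$ maps $\SPol_n^+$ bijectively onto $\SPol_n^-\underline{x}_n$. Because $\tau^{-1}=\pm\tau$, it also maps $\SPol_n^-\underline{x}_n$ back onto $\SPol_n^+$.
\item Proposition~\ref{prop_odd_conjugation} gives $\pd_i^-=\tau^{-1}\pd_i\tau$, equivalently $\pd_i=\tau\pd_i^-\tau^{-1}$. For $g\in \SPol_n^-\underline{x}_n$ we have $\tau^{-1}g\in\SPol_n^+$, hence $\pd_i^-\tau^{-1}g\in\SPol_n^+$, and applying $\tau$ gives $\pd_i g\in \SPol_n^-\underline{x}_n$.
\item In the same way, $\pd_i^- g=\tau^{-1}\pd_i\tau g$. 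The inner factor $\tau g$ lies in $\SPol_n^-\psi(\underline{x}_n)\underline{x}_n=\SPol_n^-$ up to sign, which is not $\SPol_n^+$, so this route does not directly apply.
\end{enumerate}

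For $\pd_i^-$ on $\SPol_n^-\underline{x}_n$ I will therefore argue directly. By \eqref{eq_odd_Leibniz}, the ideal generated by $\underline{x}_n$ is not obviously preserved, so instead I will use the identity $\pd_i^-(f)=\ol\pd_i(f)+s_i(f)x_i$ from the proof of Proposition~\ref{prop_odd_conjugation}.

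Write $g=h\underline{x}_n$ with $h\in\SPol_n^-$. The operator $\ol\pd_i=\psi\pd_i\psi$ satisfies a twisted Leibniz rule and maps $\SPol_n^-$ into itself. Expanding $\ol\pd_i(h\underline{x}_n)=\ol\pd_i(h)\underline{x}_n+s_i(h)\ol\pd_i(\underline{x}_n)$ then reduces the claim to checking a single identity: that $\ol\pd_i(\underline{x}_n)+s_i(\underline{x}_n)x_i$ is a multiple of $\underline{x}_n$ by an element of $\SPol_n^-$. This in turn reduces to a two-variable computation on $x_i^{a+1}x_{i+1}^{a}$ with the outer factors pulled through up to sign.

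Alternatively, and this is what I would try first, I would reuse step 2 with the roles reversed. The relation $\pd_i=\tau\pd_i^-\tau^{-1}$ also reads $\pd_i^-=\tau^{-1}\pd_i\tau$ with $\tau^{-1}=\pm\tau$. So it suffices to know that $\pd_i$ preserves $\tau(\SPol_n^-\underline{x}_n)$. This set equals $\SPol_n^+\psi(\underline{x}_n)\underline{x}_n=\SPol_n^+$, up to sign, if $\tau$ is read as $f\mapsto\psi(f)\underline{x}_n$ applied to $f=h\underline{x}_n$: indeed $\psi(h\underline{x}_n)\underline{x}_n=\psi(h)\psi(\underline{x}_n)\underline{x}_n=\pm\psi(h)\in\SPol_n^+$. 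That fixes the mistake in step 3 ($\psi$ of an element of $\SPol_n^-$ lies in $\SPol_n^+$), so $\pd_i^- g=\tau^{-1}\pd_i(\pm\psi(h))\in\tau^{-1}(\SPol_n^+)=\SPol_n^-\underline{x}_n$.

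The only real obstacle is keeping track of signs from the skew-commutation and from $\tau^2=\pm1$. Since all statements are about membership in subspaces, the signs are harmless.
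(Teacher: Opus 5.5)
Your final argument is correct and is essentially the paper's. Both rest on the conjugation $\partial_i^-=\tau^{-1}\partial_i\tau$ of Proposition~\ref{prop_odd_conjugation}; the paper pairs it with $\tau(V_n)\subseteq V_n$ (Proposition~\ref{prop_odd_tau_end}), whereas you check the two pieces $\SPol_n^+$ and $\SPol_n^-\underline{x}_n$ of the intersection separately. Your version spells out what the paper's one-line proof leaves tacit: both $\partial_i,\partial_i^-\in\ONH_n$ preserve $\SPol_n^+$, and $\tau$ exchanges the two pieces up to sign since $\psi(\underline{x}_n)\underline{x}_n=\pm1$. It also avoids needing the monomial basis of $V_n$.

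In a write-up, delete your step 3. It contradicts your own step 1, because $\tau(h\underline{x}_n)=\pm\psi(h)\in\SPol_n^+$. Also drop the twisted-Leibniz detour, which is not needed.
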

\begin{proof}
    This follows immediately from Propositions \ref{prop_odd_conjugation} and \ref{prop_odd_tau_end}.
\end{proof}
We can hence let $\wh\ONC_n$ be the subalgebra of endomorphisms of $\SPol_n$ generated by $\pd_i$ and $\pd_i^-$ for $i=1,\cdotsc,n-1$: 
\[\wh\ONC_n\coloneqq \wan{\pd_1,\cdotsc,\pd_{n-1},\pd_1^-,\cdotsc,\pd_{n-1}^-}\subset\ONH_n.\]
This then acts on $V_n$:
\[\rho\colon \wh\ONC_n\lto \End_\bk (V_n).\] 
Proposition 2.13 in~\cite{EKL12} shows that $\SPol_n$ is a free $\OLbd_n$-module generated by $V_n$, and Proposition 2.11 with Corollary 2.14 \textit{ibid.} shows that the action of $\ONH_n$ on $\SPol_n$ is faithful and gives an isomorphism
\[
\ONH_n \cong \End_{\mathsf{O}\Lambda_n}(\SPol_n) \cong \Mat_{n!}(\mathsf{O}\Lambda_n). 
\]
Hence, just as in the even case \ref{cor_rho_injective}, we have
\begin{cor}\label{cor_odd_rho_injective}
    The map $\rho\colon \wh\ONC_n\lto \End_\bk(V_n)$ above is injective. 
\end{cor}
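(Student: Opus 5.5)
The plan is to mirror the proof of Corollary~\ref{cor_rho_injective} in the even case. The map $\rho$ is the composite of two maps. The first is the inclusion $\wONC_n\subset\ONH_n$. The second is the restriction of the action of $\ONH_n$ on $\SPol_n$ to the subspace $V_n$. By Lemma~\ref{lem_odd_V_stable} this restriction is well defined on $\wONC_n$. So it suffices to show: if $a\in\wONC_n$ acts by zero on $V_n$, then $a$ acts by zero on all of $\SPol_n$. Faithfulness of the $\ONH_n$-action, from \eqref{eq_odd_matrix_over_sym}, then forces $a=0$.

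The key input is right $\OLbd_n$-linearity. Every element of $\ONH_n$, in particular every element of $\wONC_n$, commutes with right multiplication by $h\in\OLbd_n$. For the generators $\partial_i$ this is the computation $\partial_i(fh)=\partial_i(f)h$ from \eqref{eq_odd_Leibniz}. For multiplication by $x_j$ it is associativity, since left multiplication commutes with right multiplication. Products of such operators inherit the property, and $\partial_i^-=x_i\partial_ix_{i+1}$ is one such product.

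By \cite[Proposition~2.13]{EKL12}, $\SPol_n$ is a free right $\OLbd_n$-module with basis the monomials $x_1^{a_1}\cdots x_{n-1}^{a_{n-1}}$, $0\le a_i\le n-i$. These span $V_n$ by the description in the proof of Proposition~\ref{prop_odd_tau_end}. Hence every $f\in\SPol_n$ can be written as $f=\sum_b b\,h_b$ with $b$ running over this basis and $h_b\in\OLbd_n$. If $\rho(a)=0$, then
\[
a(f)=\sum_b a(b)\,h_b=0 .
\]
Therefore $a$ is the zero operator on $\SPol_n$, and $a=0$ in $\ONH_n$.

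The only point needing care is that the chosen free basis of $\SPol_n$ over $\OLbd_n$ is indeed contained in $V_n$, rather than some other basis from \cite{EKL12}. I would check that the cited monomial basis agrees with the basis of $V_n$ used in Proposition~\ref{prop_odd_tau_end}. If \cite{EKL12} instead uses a basis in $x_2,\dots,x_n$ or in Schubert-type polynomials, I would replace the argument with: some basis of $V_n$ is an $\OLbd_n$-basis, because $V_n\otimes\OLbd_n\to\SPol_n$ is surjective and both sides have equal graded ranks. Either way, only spanning of $\SPol_n$ by $V_n\cdot\OLbd_n$ is needed for injectivity.
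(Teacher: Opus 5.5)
Your proposal is correct and is essentially the paper's argument. Elements of $\wONC_n\subset\ONH_n$ are right $\OLbd_n$-linear, $\SPol_n$ is generated as a right $\OLbd_n$-module by $V_n$ (the paper cites \cite[Proposition~2.13]{EKL12} for exactly this), so an element vanishing on $V_n$ vanishes on all of $\SPol_n$, and faithfulness then forces it to be zero. Your concern about which basis \cite{EKL12} uses does not matter, since, as you note, only spanning of $\SPol_n$ by $V_n\cdot\OLbd_n$ is needed.
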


\begin{prop}\label{prop_odd_relations}
    The following relations hold in $\wONC_n$:
    \begin{eqnarray}
    \pd_i\pd_j^-&=&-\pd_j^-\pd_i\qquad\te{if }|i-j|>1,\\
    1&=& \pd_i\pd_i^-+\pd_i^-\pd_i,\\
    \pd_i\pd_{i+1}^-&=&\pd_i^-\pd_{i+1}-\pd_i^-\pd_i+1-\pd_{i+1}^-\pd_{i+1},\\
    \pd_{i+1}\pd_i^-&=&-\pd_i^-\pd_{i+1}+[\pd_i^-,\pd_{i+1}^-]_{\mathrm s}[\pd_i,\pd_{i+1}]_{\mathrm s}.
  \end{eqnarray}
\end{prop}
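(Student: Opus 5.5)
We would prove these relations by direct computation inside the odd nilHecke algebra $\ONH_n$, using the defining relations \eqref{eq_nilHecke_def_odd} together with $\partial_i^-=x_i\partial_ix_{i+1}$ from \eqref{eq_odd_dminus_def}. This mirrors the proof of Proposition~\ref{prop_relations} in the even case. We first reduce to few strands. Every relation involves only $\partial_i,\partial_{i+1},x_i,x_{i+1},x_{i+2}$. The odd nilHecke relations are local up to the super-sign from odd generators on other strands, and the faithful action on $\SPol_n$ from \eqref{eq_odd_matrix_over_sym} lets us check each identity as an operator identity if needed. So it suffices to verify the first relation for $n=4$ with $i=1,j=3$, the second for $n=2$, and the last two for $n=3$ with $i=1$. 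Throughout, we write $x=x_1$, $y=x_2$, $z=x_3$.

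The first two relations are quick. For $|i-j|>1$, the element $\partial_j^-=x_j\partial_jx_{j+1}$ is a product of three odd elements. Each of them anticommutes with $\partial_i$, by the rules $\partial_i x_k=-x_k\partial_i$ for $k\ne i,i+1$ and $\partial_i\partial_j=-\partial_j\partial_i$. This gives $\partial_i\partial_j^-=-\partial_j^-\partial_i$. For $1=\partial_i\partial_i^-+\partial_i^-\partial_i$, we expand
\[\partial_i x_i\partial_i x_{i+1}+x_i\partial_i x_{i+1}\partial_i .\]
We use $\partial_i x_i=1-x_{i+1}\partial_i$ and $x_{i+1}\partial_i=1-\partial_ix_i$, together with $\partial_i x_{i+1}=1-x_i\partial_i$. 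After these substitutions, $\partial_i^2=0$ kills all terms except $1$. We will double-check this on the basis $\{1,x_1\}$ of $V_2$.

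For the two three-strand relations, we put each side into a normal form. The normal form is a combination of $p(x,y,z)\,\partial_w$, with monomials written in a fixed order and carrying signs. We push all $\partial$'s to the right using
\[\partial_1 x=1-y\partial_1,\qquad \partial_1 y=1-x\partial_1,\qquad \partial_1 z=-z\partial_1,\]
and the analogous rules for $\partial_2$. This is exactly parallel to the table of products computed in the proof of Proposition~\ref{prop_relations}. The left sides need only a few rewrites. For instance,
\[\partial_2\partial_1^-=\partial_2 x\partial_1 y=-x\partial_2\partial_1 y,\]
and then $\partial_1 y=1-x\partial_1$. For the right sides we expand $\partial_1^-\partial_2$, $\partial_1^-\partial_1$, $\partial_2^-\partial_2$, and the four terms of the supercommutator product $[\partial_1^-,\partial_2^-]_{\mathrm s}[\partial_1,\partial_2]_{\mathrm s}$. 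Since all four generators are odd, $[a,b]_{\mathrm s}=ab+ba$. We then compare coefficients in front of $1,\partial_1,\partial_2,\partial_{12},\partial_{21},\partial_{121}$.

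The main obstacle is sign bookkeeping in the last relation. There, degree-four words in $\partial^\pm$ produce cubic skew monomials times $\partial_{121}$, and each reordering of $x,y,z$ introduces a sign. We also have to watch for terms that cancel only modulo the odd braid relation $\partial_{121}=\partial_{212}$. As a safeguard, we would instead verify the identities on the six-dimensional space $V_3$, with basis $x_1^ax_2^b$ for $a\le 2$, $b\le1$. This suffices by Corollary~\ref{cor_odd_rho_injective}: the map $\rho$ is injective, so two elements of $\wONC_3$ agree once they agree on $V_3$. On $V_3$ we compute $\partial_i$ with the Leibniz rule \eqref{eq_odd_Leibniz}, and we compute $\partial_i^-$ through the formula $\partial_i^-(f)=x_if-x_i^2\partial_i(f)$ obtained in the proof of Proposition~\ref{prop_odd_conjugation}. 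Evaluating both sides of each relation on six vectors is then a finite check. This check would also detect a wrong sign convention in the stated relations.
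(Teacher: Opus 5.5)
Your proposal is correct and is essentially the paper's proof. Like the paper, you reduce to two or three strands, expand everything in $\ONH_3$ using $\partial_i^-=x_i\partial_ix_{i+1}$ and the odd nilHecke relations into normal form $p(x_1,x_2,x_3)\partial_w$, and compare; the stated signs do check out, since one finds $[\partial_1^-,\partial_2^-]_{\mathrm s}[\partial_1,\partial_2]_{\mathrm s}=-x_1^2(\partial_1\partial_2+\partial_2\partial_1)=\partial_2\partial_1^-+\partial_1^-\partial_2$, and your fallback check on $V_3$ via injectivity of $\rho$ (Corollary~\ref{cor_odd_rho_injective}, established before this proposition) is a legitimate, non-circular alternative to the normal-form comparison.
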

These relations allow to move $\partial_i$'s to the right of $\partial_j^-$'s, and vice versa. 
 Here the bracket $[x,y]_{\mathrm s}$ refers to the super bracket, so that for instance $[\pd_i^\pm,\pd_{i+1}^\pm]_{\mathrm s}=\pd_i^\pm\pd_{i+1}^\pm+\pd_{i+1}^\pm\pd_i^\pm$. Note that the last equation has one fewer term than in the even case, in Proposition \ref{prop_relations}. Three out of these four relations can be fully rewritten using supercommutators: 
 \[
 [\partial_i, \partial_j^-]_{\mathrm s}=0\uad (\te{for }|i-j|>1), \ \ [\partial_i,\partial_i^-]_{\mathrm s}=1, \ \ [\pd_{i+1},\pd_i^-]_{\mathrm s}=[\pd_i^-,\pd_{i+1}^-]_{\mathrm s}[\pd_i,\pd_{i+1}]_{\mathrm s},
 \]
 and the remaining relation can be modified to 
\[
\pd_i(\pd_{i+1}^- - \pd_i^-) =(\pd_i^- -\pd_{i+1}^-)\pd_{i+1}.
\]
or, equivalently, to 
\[
    \pd_i^-(\pd_{i+1}-\pd_i)=(\pd_i-\pd_{i+1})\pd_{i+1}^-.
\]

\begin{proof}
    It is worth noting that in the odd case, we have $\pd_1x_1\pd_1=\pd_1x_2\pd_1=\pd_1$. The first two equations are easy to check. 
    The third equation can be checked by noting (here again we restrict to 3 strings, denoting $x_1=x,x_2=y,x_3=z$)
    \begin{align*}
        \pd_1\pd_2^-&=-x\pd_1\pd_2z+\pd_2z\\
        &=x\pd_1y\pd_2-x\pd_1+\pd_2z\\
        &=x\pd_1y\pd_2-x\pd_1y\pd_1+1-y\pd_2\\
        &=x\pd_1y\pd_2-x\pd_1y\pd_1+1-y\pd_2z\pd_2.
    \end{align*}

    The fourth equation can be checked by computing the following:
    \begin{align*}
        \pd_2\pd_1^-&=-x^2\pd_2\pd_1-x\pd_2,\\
        \pd_1^-\pd_2&=-x^2\pd_1\pd_2+x\pd_2,\\
        \pd_1^-\pd_2^-\pd_1\pd_2&=-x^3\pd_{1,2,1}+x^2y\pd_{1,2,1}-xy^2\pd_{1,2,1}-x^2\pd_1\pd_2+xy\pd_1\pd_2,\\
        \pd_2^-\pd_1^-\pd_1\pd_2&=xy^2\pd_{1,2,1}-xy\pd_1\pd_2,\\
        \pd_1^-\pd_2^-\pd_2\pd_1&=x^3\pd_{1,2,1}-x^2\pd_2\pd_1+xy\pd_2\pd_1,\\
        \pd_2^-\pd_1^-\pd_2\pd_1&=-x^2y\pd_{1,2,1}-xy\pd_2\pd_1,
    \end{align*}
    so that one may check directly that
    \[\pd_2\pd_1^-=-\pd_1^-\pd_2+\pd_1^-\pd_2^-\pd_1\pd_2+\pd_2^-\pd_1^-\pd_1\pd_2+\pd_1^-\pd_2^-\pd_2\pd_1+\pd_2^-\pd_1^-\pd_2\pd_1.\] 
\end{proof}

Let us define, as in \cite{EKL12}, the odd Schubert polynomials as
\[\Slie_w\coloneqq \pd_{w^{-1}w_0}(\ul x_n),\]
as well as the negative odd Schubert polynomials as 
\[\Slie_w^-\coloneqq \pd_w^-(1).\] 
Note that $\deg\Slie_w=\deg\Slie_w^-=2\ell(w)$.
Lemma 2.12 in~\cite{EKL12} tells us that the odd Schubert polynomials form a basis for $V_n$.   As in the even case, observing that
\[\Slie_w^-=\pd_w^-(1)=\tau^{-1}\pd_w \tau (1)=\tau^{-1}\pd_w \ul x_n=\tau^{-1}\Slie_{w_0w^{-1}}\]
and that $\tau$ is (up to sign) an involution on $V_n$, we deduce that the negative Schubert polynomials $\{\Slie_w^-\}_w$ also give a basis of $V_n$. We may then prove the following in much the same way as in the even case.
\begin{prop}\label{prop_odd_rho_iso}
   The map $\rho\colon \wh\ONC_n\simlto \End_\bk(V_n)$ is an isomorphism of algebras. 
\end{prop}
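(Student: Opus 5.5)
By Corollary~\ref{cor_odd_rho_injective}, $\rho$ is injective, so it remains to show surjectivity. Alternatively, it suffices to show $\dim\wONC_n\ge (n!)^2$. I would follow the triangularity argument of Proposition~\ref{prop_rho_iso}, using the odd Schubert basis $\{\Slie_w\}$ and the negative odd Schubert basis $\{\Slie^-_w\}$ of $V_n$, both established just above. The goal is to show that, for every pair $u,v\in S_n$, the rank one operator $E_{u,v}\colon \Slie_w\mapsto \delta_{u,w}\Slie_v^-$ lies in $\rho(\wONC_n)$. These $(n!)^2$ operators are linearly independent, since they send one basis to another, so they span $\End_\bk(V_n)$.

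The first step is to record how $\pd_u$ acts on odd Schubert polynomials. Since $\Slie_w=\pd_{w^{-1}w_0}(\ul x_n)$ and the $\pd_i$ satisfy the odd nilCoxeter relations, two facts follow. First, $\pd_u\pd_{w^{-1}w_0}$ is either $0$ or $\pm\pd_{u w^{-1}w_0}$, according to whether lengths add. Here the sign is needed because in the odd setting products along different reduced words may differ by sign, through the relation $\pd_i\pd_j=-\pd_j\pd_i$ for $|i-j|>1$. Second, $\pd_{w_0}(\ul x_n)=\pm1$, by a degree count together with Lemma~2.12 of \cite{EKL12}, or directly since $\Slie_{\id}$ is a nonzero scalar. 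Consequently $\pd_u(\Slie_w)$ is $\pm\Slie_{\id}$-type scalar $\pm1$ if $w=u$. It equals $\pm\Slie_t$ if $w=tu$ with $\ell(w)=\ell(t)+\ell(u)$, and it is $0$ otherwise. The indexing matches the even case, and degree considerations force $\pd_u\Slie_w$ to be a constant exactly when $\ell(w)=\ell(u)$.

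Next, I would run the downward induction on $\ell(u)$. In the base case $u=w_0$, the operator $\pd^-_v\pd_{w_0}$ sends $\Slie_w$ to $\pm\delta_{w,w_0}\pd_v^-(1)=\pm\delta_{w,w_0}\Slie^-_v$, so after rescaling it equals $E_{w_0,v}$. For the inductive step, $\pd_v^-\pd_u$ sends $\Slie_u\mapsto\pm\Slie_v^-$ and $\Slie_w\mapsto\pm\pd_v^-(\Slie_t)$ for $w>_L u$, and it kills every other basis element. For each $w>_L u$ we have $\ell(w)>\ell(u)$, so by induction each $E_{w,v'}$ lies in the image. Expanding $\pd_v^-(\Slie_t)$ in the basis $\{\Slie^-_{v'}\}$, I subtract the appropriate combination of the $E_{w,v'}$. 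What remains is a nonzero multiple of $E_{u,v}$.

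The main obstacle I expect is bookkeeping the signs. Two points need care: that $\pd_{w_0}\ul x_n$ is a nonzero scalar, and that $\pd_u\pd_{w^{-1}w_0}$ equals $\pm\pd_{w^{-1}w_0\cdot}$ of the correct length rather than vanishing. For both I would invoke the odd nilCoxeter basis results of \cite{EKL12}. Those results say that $\{\pd_w\}$, for a fixed choice of reduced words, is a basis, and that products are $\pm$ basis elements or zero. Since we only need nonvanishing and not exact signs, this suffices.
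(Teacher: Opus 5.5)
Your proposal is correct and follows the paper's proof. The paper also gets injectivity from Corollary~\ref{cor_odd_rho_injective} and then reruns the triangularity argument of Proposition~\ref{prop_rho_iso}, using the odd Schubert and negative odd Schubert bases. The one difference is minor: you derive the action $\partial_u\Slie_w=\pm\Slie_{wu^{-1}}$ (when $\ell(wu^{-1})=\ell(w)-\ell(u)$, and $0$ otherwise) yourself from the odd nilCoxeter relations, whereas the paper simply cites \cite[Equation (2.42)]{EKL12} for it.
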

\begin{proof}
    We have seen earlier that the map is injective. \cite[Equation (2.42)]{EKL12} shows that
    \[\pd_u\Slie_w=\begin{cases}
        \pm\Slie_{wu^{-1}} & \ell(wu^{-1})=\ell(w)-\ell(u),\\
        0 &\te{else}
    \end{cases}.\]
    Hence the proof in the even case (Proposition \ref{prop_rho_iso}) works again, namely a triangularity argument with respect to the bases of Schubert polynomials and negative Schubert polynomials. The same argument goes through.
\end{proof}

\begin{cor}\label{cor_odd_def_relations}
    A full set of defining relations in $\wh\ONC_n$ is given by the relations in Proposition \ref{prop_odd_relations}, Corollary~\ref{cor_odd_plus_minus_iso}, and the usual relations on $\pd_i$'s, namely
    \[
    \pd_i^2=0,\qquad \pd_i\pd_{i+1}\pd_i=\pd_{i+1}\pd_i\pd_{i+1},\qquad \pd_i\pd_j=-\pd_j\pd_i \  \mathrm{for} \ |i-j|>1. 
    \]
\end{cor}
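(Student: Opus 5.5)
Let $\wONC_n'$ denote the superalgebra with generators $\pd_1,\dots,\pd_{n-1},\pd_1^-,\dots,\pd_{n-1}^-$ subject to the listed relations. The plan mirrors the proof of Proposition~\ref{prop_wNC_def_rel}. All listed relations hold in $\wONC_n$: the nilCoxeter relations on the $\pd_i$ come from~\eqref{eq_nilHecke_def_odd}, those on the $\pd_i^-$ from Corollary~\ref{cor_odd_plus_minus_iso}, and the mixed ones from Proposition~\ref{prop_odd_relations}. Hence there is a surjective superalgebra homomorphism $\wONC_n'\to\wONC_n$. By Proposition~\ref{prop_odd_rho_iso} we have $\dim\wONC_n=(n!)^2$. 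So it suffices to show that $\dim\wONC_n'\le (n!)^2$. We will do this by proving that the products $\pd^-_w\pd_u$, for $w,u\in S_n$, span $\wONC_n'$.

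First, in $\wONC_n'$ the relations of Corollary~\ref{cor_odd_plus_minus_iso} and the odd nilCoxeter relations show that $\pd_w$ and $\pd_w^-$ are well defined up to sign. This follows from Matsumoto's theorem: the braid relation has no sign, and far commutation introduces only a sign. Moreover, $\pd_w\pd_{w'}$ is $\pm\pd_{ww'}$ or $0$ according to whether lengths add, and similarly for the negative generators. Thus, up to signs, the positive and negative subalgebras are spanned by the $n!$ elements $\pd_w$, respectively $\pd^-_w$.

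The core step is a straightening argument. The second and fourth relations of Proposition~\ref{prop_odd_relations} express $\pd_i\pd_i^-$ and $\pd_{i+1}\pd_i^-$ as linear combinations of words of the form $\pd^-_{x'}\pd_{y'}$ with $\ell(x')=\ell(y')\le 2$. The first relation handles $|i-j|>1$. For $\pd_i\pd_{i+1}^-$ we use the third relation, which is already in normal form. Here we note that the degree is preserved, so every term has $\ell(x')=\ell(y')$. We then run the same induction as in Proposition~\ref{prop_wNC_def_rel}: we show
\[
\pd^-_w\pd_u\pd_i^-\in\Span_\bk\{\pd^-_x\pd_y : w\le_{\mathrm R}x,\ \ell(x)-\ell(y)=\ell(w)-\ell(u)+1\},
\]
by induction on the lexicographic order of $\bigl(\binom n2-\ell(w),\ell(u)\bigr)$. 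The base case is $u=\id$. For the step, write $u=u's_j$ and rewrite $\pd_j\pd_i^-$ via the relations. Then reorder $\pd^-_w\pd_{u'}\pd^-_{x'}$ one negative generator at a time. At each step, either $x$ strictly grows in right weak order or $\ell(y)$ drops, so the induction hypothesis applies. Finally, absorb $\pd_{y'}$ into $\pd_y$. Signs are harmless throughout, since only spanning is claimed.

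The expected main obstacle is bookkeeping in this induction. First, I must confirm that each mixed relation really has all terms in normal form $\pd^-\pd$ with the required length balance. Second, I must check that the induction measure decreases, especially in the rewriting step where the terms $\pd_i^-\pd_i$ and $\pd_{i+1}^-\pd_{i+1}$ appear. If a term is not in normal form, I would first rewrite it using the relation with the roles of $\pd$ and $\pd^-$ exchanged. This exchanged relation is obtained by conjugating with $\tau$ as in Proposition~\ref{prop_odd_conjugation}, and it would need to be derived within $\wONC_n'$ from the listed relations. Once spanning is established, $\dim\wONC_n'\le(n!)^2=\dim\wONC_n$, and the surjection is an isomorphism.
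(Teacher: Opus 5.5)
Your proposal is correct and is essentially the paper's proof, which simply says the argument is identical to that of Proposition~\ref{prop_wNC_def_rel}: a surjection from the presented algebra, spanning by the products $\partial^-_w\partial_u$ via the same lexicographic induction, and the bound $(n!)^2=\dim\wONC_n$ from Proposition~\ref{prop_odd_rho_iso}. The contingency in your last paragraph is not needed. As you yourself checked, all four mixed relations in Proposition~\ref{prop_odd_relations} already have right-hand sides in $\partial^-\partial$ normal form, with balanced lengths at most $2$, and the signs from odd far-commutation do not affect spanning.
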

\begin{proof}
    The proof is identical to that of the even case, namely Proposition \ref{prop_wNC_def_rel}. 
\end{proof}

\subsection{Monoidal product}\label{subsect:odd monoidal}
It is worth noting that, similarly to Section \ref{subsection_minimal}, the natural inclusion of algebras $\ONH_n\otimes\ONH_m\lto \ONH_{n+m}$ induces natural inclusions $\ONC^\eps_n\otimes\ONC^\eps_m\lto\ONC^\eps_{n+m}$ for $\eps\in\{+,-\}$, which gives natural inclusions
\begin{equation}\label{eq:oddmonoidal}
\wh\ONC_n\otimes\wh\ONC_m\lto\wh\ONC_{n+m},
\end{equation} 
where now $\otimes$ denotes the super tensor product. This map takes $\pd_i^\eps\otimes 1\lmto \pd_i^\eps$ and $1\otimes\pd_j^\eps\lmto\pd_{n+j}^\eps$. 

Just as in Section \ref{subsection_minimal}, we may consider a $\bk$-linear monoidal supercategory $\cal{ONC}'$ with a generating object $\CE_0$ and
\[\Hom_{\cal{ONC}'}(\CE_0^{\otimes n},\CE_0^{\otimes m})=\begin{cases}
    \wh\ONC_n & n=m\\0 &\te{else}
\end{cases}.\]
The tensor product is given on morphism via the maps \eqref{eq:oddmonoidal}. Let $\cal{ONC}=\Kar(\cal{ONC}')$ be the Karoubi envelope. There is also an odd nil-Hecke category $\cal{ONH}$, namely the Karoubi envelope of the strict monoidal $\bk$-linear category $\cal{ONH}'$ with generating object $\CE$ such that 
\[\hom_{\cal{ONH}'}(\CE^{\otimes n},\CE^{\otimes m})=\begin{cases} \ONH_n &\te{if }n=m\\ 0 &\te{else}\end{cases}.\]
Again the inclusion $\wh\ONC_n\subset\ONH_n$ induces a natural functor $\cal{ONC}\lto\cal{ONH}$, and by Proposition \ref{prop_odd_rho_iso}, this functor induces an isomorphism on the Grothendieck ring after forgetting the parity grading:
\[K_0(\cal{ONC})\cong K_0(\cal{ONH})\cong\BZ\{E\}.\]

One could also keep track of the parity grading, as is the case in, for instance, \cite{hill2015categorification} and \cite{EllisLauda16}. Let $K_0^\pi$ denote the ``parity Grothendieck group'', which is a module over the ring $\CA=\BZ[q,q^{-1},\pi]/(\pi^2-1)$, such that $[\Pi M]=\pi[M]$. In \cite{hill2015categorification} it was shown that 
\[K_0^\pi(\cal{ONH})\cong {U}_{q,\pi}^+(\sl_2),\] 
where ${U}_{q,\pi}^+(\sl_2)$ is the parity integral form of the nilpotent part of quantum group considered in \cite{hill2015categorification,EllisLauda16}, spanned over the ring $\CA$ by the $E^{(n)}$'s. Again by Proposition \ref{prop_odd_rho_iso}, and since $V_n$ is a $(\BZ\times\BZ/2)$-graded simple of $\ONH_n$ (namely the simple head of the projective representation), we have an induced isomorphism on the parity Grothendieck ring:
\[K_0^\pi(\cal{ONC})\cong K_0^\pi(\cal{ONH})\cong {U}_{q,\pi}^+(\sl_2).\]

\subsection{A \texorpdfstring{$\pgl(1|1)$}{pgl(1,1)}-action}
Let $\on{char}\bk\neq 2$. Recall that $\gl(1|1)$ is the Lie superalgebra of $2\times 2$ matrices, where the diagonal matrices are in even grading and the off-diagonal matrices are in odd grading, and the Lie superbracket is  $[x,y]=xy-(-1)^{p(x)p(y)}yx$. It has a homogeneous basis 
\[e=\tbt{0}{1}{0}{0},\qquad c=\tbt{1}{0}{0}{1},\qquad h=\tbt{1}{0}{0}{-1},\qquad f=\tbt{0}{0}{1}{0},\]
where $p(c)=p(h)=0$ and $p(e)=p(f)=1$. 
The center of this algebra is $\kk c$. 

$\pgl(1|1)$ is then the Lie superalgebra obtained from $\gl(1|1)$ by quotienting out the center. It is generated by $e,h,f$, with commutators 
\[[h,e]=2e,\qquad [h,f]=-2f,\qquad [e,e]=[e,f]=[f,f]=0.\] 

\begin{prop}\label{prop:odd pgl action}
    There is a $\pgl(1|1)$-action by super-derivations on $\ONH_n$ given by
    \begin{align*}
        e(x_i)&=x_i^2,  \qquad e(\pd_i)=1,\\
        h(a)&=\deg(a)a, \\
         f(x_i)&=1,\qquad\   f(\pd_i)=0.
    \end{align*}
\end{prop}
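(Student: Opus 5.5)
The plan is to define $e$, $h$, $f$ as super-derivations of the free superalgebra on the generators $x_1,\dots,x_n,\partial_1,\dots,\partial_{n-1}$. We then check two things:
\begin{itemize}
\item each of the three maps preserves the two-sided ideal generated by the defining relations \eqref{eq_nilHecke_def_odd}, so it descends to $\ONH_n$;
\item the induced super-derivations satisfy the bracket relations of $\pgl(1|1)$.
\end{itemize}
Since the relations \eqref{eq_nilHecke_def_odd} are defining relations for $\ONH_n$ by~\cite[Proposition~2.1]{EKL12}, this suffices. We take $e,f$ odd and $h$ even. The operator $h$ is the grading operator, which is a derivation preserving the homogeneous relations, so nothing needs checking for it.

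For descent, a super-derivation $D$ preserves the ideal as soon as $D(r)$ lies in the ideal for each defining relation $r$. We verify this relation by relation, using the odd Leibniz rule $D(ab)=D(a)b+(-1)^{p(a)}aD(b)$, with $p(a)=1$ for every generator.

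For $f$ this is immediate. Each relation maps either to $0$ or to a cancelling pair such as $f(x_ix_j+x_jx_i)=x_j-x_j+x_i-x_i$. For $x_i\partial_i+\partial_ix_{i+1}$ we get $\partial_i-\partial_i=0$.

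For $e$ the checks run as follows.
\begin{itemize}
\item $e(\partial_i^2)=\partial_i-\partial_i=0$.
\item $e(\partial_i\partial_j+\partial_j\partial_i)=\partial_j-\partial_i+\partial_i-\partial_j=0$.
\item For the braid relation, $e(\partial_i\partial_{i+1}\partial_i)=\partial_{i+1}\partial_i-\partial_i^2+\partial_i\partial_{i+1}$. This is symmetric under $i\leftrightarrow i+1$ modulo $\partial_i^2=0$, so it agrees with $e(\partial_{i+1}\partial_i\partial_{i+1})$.
\item For the anticommutation relations $x_ix_j+x_jx_i$ and $\partial_ix_j+x_j\partial_i$ with $j\ne i,i+1$, the terms cancel once we note that $x_j^2$ is even. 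Hence $x_j^2$ commutes with $x_i$, and with $\partial_i$ for $j\neq i,i+1$.
\item The key relations are $x_i\partial_i+\partial_ix_{i+1}=1$ and $\partial_ix_i+x_{i+1}\partial_i=1$. For the first,
\[
e(x_i\partial_i+\partial_ix_{i+1})=x_i^2\partial_i-x_i+x_{i+1}-\partial_ix_{i+1}^2 .
\]
Substituting $\partial_ix_{i+1}=1-x_i\partial_i$ twice gives $\partial_ix_{i+1}^2=x_{i+1}-x_i+x_i^2\partial_i$, so the expression vanishes. The second relation is handled symmetrically, or deduced from the first via the anti-involution rotating diagrams.
\end{itemize}

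For the bracket relations, the supercommutator of super-derivations is again a super-derivation, so it suffices to evaluate brackets on generators.
\begin{itemize}
\item $[h,e]=2e$ and $[h,f]=-2f$ hold because $e$ and $f$ are homogeneous of degrees $+2$ and $-2$.
\item For $[e,e]=2e^2$ we compute $e^2(x_i)=e(x_i^2)=x_i^3-x_i^3=0$ and $e^2(\partial_i)=e(1)=0$.
\item Likewise $f^2=0$ on the generators.
\item For $[e,f]=ef+fe$ we get $(ef+fe)(x_i)=e(1)+f(x_i^2)=0+(x_i-x_i)=0$ and $(ef+fe)(\partial_i)=f(1)=0$.
\end{itemize}
The hypothesis $\operatorname{char}\kk\ne2$ is used only to pass between $[e,e]$ and $e^2$. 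Since $c$ is central in $\gl(1|1)$, it does not appear here; it acts by zero in this action, consistent with the quotient to $\pgl(1|1)$.

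The main obstacle is the bookkeeping of signs in the descent check, especially for the mixed relations $x_i\partial_i+\partial_ix_{i+1}=1$ and $\partial_ix_i+x_{i+1}\partial_i=1$. There the cancellation depends on rewriting $\partial_ix_{i+1}^2$ using the relation itself. I would do that computation carefully first, and treat the remaining relations as routine.
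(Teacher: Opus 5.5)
Your proposal is correct and follows the paper's approach: define $e,h,f$ on generators, check that they respect the defining relations, then check the $\pgl(1|1)$ brackets on generators. The only difference is that the paper cites Corollary~3.9 of~\cite{EllisQi16} for the well-definedness of $e$, whereas you verify it directly, which makes your argument self-contained.

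One small correction to an optional remark. The $180^\circ$ rotation sends each mixed relation to itself, with $i\mapsto n-i$; it does not exchange $x_i\partial_i+\partial_ix_{i+1}=1$ with $\partial_ix_i+x_{i+1}\partial_i=1$. So either do the symmetric computation $\partial_ix_i^2=x_i-x_{i+1}+x_{i+1}^2\partial_i$ directly, or use the product-reversing flip that fixes indices.
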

\begin{proof}
    By \cite[Corollary 3.9]{EllisQi16}, the  action of $e$ is well-defined. The action of $h$ is also well-defined as the relations of $\ONH_n$ are homogeneous. It is straightforward to check that the action of $f$ is well-defined and to check the $\pgl(1|1)$ relations. 
\end{proof}
This action restricts to $\wh\ONC_n$. 
\begin{prop}
    Under the action of Proposition \ref{prop:odd pgl action}, the subalgebra $\wh\ONC_n$ is a $\pgl(1|1)$-submodule.
\end{prop}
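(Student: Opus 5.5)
Since $e,h,f$ act by super-derivations, the super-Leibniz rule shows that the set of elements of $\ONH_n$ sent into $\wONC_n$ by each of $e,h,f$ is a subalgebra. So it is enough to check that each operator sends the generators $\partial_i,\partial_i^-$, $1\le i\le n-1$, of $\wONC_n$ into $\wONC_n$. For $h$ this is immediate: $\partial_i,\partial_i^-$ are homogeneous of degrees $-2$ and $2$, so $h(\partial_i)=-2\partial_i$ and $h(\partial_i^-)=2\partial_i^-$. For $f$ and $e$ acting on $\partial_i$ we read off the definitions: $f(\partial_i)=0$ and $e(\partial_i)=1$, and both lie in $\wONC_n$.

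The remaining work is to compute $e(\partial_i^-)$ and $f(\partial_i^-)$ from $\partial_i^-=x_i\partial_ix_{i+1}$. We use the super-Leibniz rule $D(ab)=D(a)b+(-1)^{p(D)p(a)}aD(b)$ with $p(e)=p(f)=1$ and $p(x_i)=p(\partial_i)=1$. For $f$ this gives
\[
f(\partial_i^-)=\partial_ix_{i+1}-x_i\,f(\partial_i x_{i+1})=\partial_ix_{i+1}-x_i\bigl(0\cdot x_{i+1}-\partial_i\cdot 1\bigr)=\partial_ix_{i+1}+x_i\partial_i.
\]
By the relation $x_i\partial_i+\partial_ix_{i+1}=1$ in \eqref{eq_nilHecke_def_odd}, this equals $1\in\wONC_n$. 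For $e$ we get
\[
e(\partial_i^-)=x_i^2\partial_ix_{i+1}-x_i\bigl(1\cdot x_{i+1}-\partial_i x_{i+1}^2\bigr)=x_i^2\partial_ix_{i+1}-x_ix_{i+1}+x_i\partial_ix_{i+1}^2 .
\]

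The main step is to recognize this last element as something in $\wONC_n$; I expect it to be zero or a scalar. First, $x_i\partial_ix_{i+1}^2=\partial_i^-x_{i+1}$. Next, rewrite $x_i^2\partial_ix_{i+1}=x_i\partial_i^-$. This gives $e(\partial_i^-)=x_i\partial_i^-+\partial_i^-x_{i+1}-x_ix_{i+1}$, which should be handled by the odd nilHecke relations. Concretely, I would substitute $\partial_ix_{i+1}=1-x_i\partial_i$ to obtain $\partial_i^-=x_i-x_i^2\partial_i$, as in the proof of Proposition~\ref{prop_odd_conjugation}. Then I would move $x_{i+1}$ past $\partial_i$ with $\partial_ix_{i+1}=1-x_i\partial_i$, and use $x_i^2x_{i+1}=x_{i+1}x_i^2$ (squares of odd generators are central among the $x$'s). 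A short bookkeeping should collapse the expression to $0$.

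If a nonzero term survives, I would fall back to comparing actions on the basis $\{x_1^{a_1}\cdots x_{n-1}^{a_{n-1}}\}$ of $V_n$ in the two-strand case, and then express the result in the basis $1,\partial_1,\partial_1^-,\partial_1\partial_1^-$ of $\wONC_2$. This reduction to $n=2$ is legitimate because $e,f$ act locally, on strands $i,i+1$ only. Once $e(\partial_i^-),f(\partial_i^-)\in\wONC_n$ is established, the derivation argument of the first paragraph completes the proof.
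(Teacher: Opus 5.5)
Your proposal is correct and follows the paper's proof. You reduce to the generators via the super-Leibniz rule and compute $h(\partial_i^-)=2\partial_i^-$, $f(\partial_i^-)=\partial_ix_{i+1}+x_i\partial_i=1$, and $e(\partial_i^-)=x_i^2\partial_ix_{i+1}-x_ix_{i+1}+x_i\partial_ix_{i+1}^2$. The ``bookkeeping'' you left open closes in one line, as in the paper, since $x_i^2\partial_ix_{i+1}+x_i\partial_ix_{i+1}^2=x_i(x_i\partial_i+\partial_ix_{i+1})x_{i+1}=x_ix_{i+1}$, so $e(\partial_i^-)=0$. One small fix: the set closed under products in your first paragraph should be the elements of $\wONC_n$, not of $\ONH_n$, that are sent into $\wONC_n$.
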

\begin{proof}
    We claim that
    \begin{equation*}
        e(\pd_i^-)=0,\ \
        h(\pd_i^-)=2\pd_i^-,\ \
        f(\pd_i^-)=1.
    \end{equation*}
    Indeed, using that $\pd_i^-=x_i\pd_ix_{i+1}$, 
    \begin{align*}
        e(x_i\pd_ix_{i+1})&=e(x_i)\pd_ix_{i+1}-x_ie(\pd_i)x_{i+1}+x_i\pd_ie(x_{i+1})\\
        &=x_i^2\pd_ix_{i+1}-x_ix_{i+1}+x_i\pd_ix_{i+1}^2\\
        &=x_i(x_i\pd_i+\pd_ix_{i+1})x_{i+1}-x_ix_{i+1}\\
        &=0,\\
        h(x_i\pd_ix_{i+1})&=2\pd_i^-,\\
        f(x_i\pd_ix_{i+1})&=f(x_i)\pd_ix_{i+1}-x_if(\pd_i)x_{i+1}+x_i\pd_if(x_{i+1})\\
        &=\pd_ix_{i+1}+x_i\pd_i\\
        &=1.
    \end{align*}
\end{proof}

    Unlike the even case, where $\wNC_n$ is the maximal $\slt$-finite submodule of $\NH_n$, the entire $\ONH_n$  
is a locally finite $\pgl(1|1)$-module.

\bibliographystyle{plain}
\bibliography{bibliography}
\end{document}